\setlist[itemize]{noitemsep,topsep=0pt, itemsep=-2pt}
\setlist[enumerate]{noitemsep,topsep=0pt, itemsep=-2pt}
\let\spacecal=\mathscr
\DeclareMathAlphabet{\mathpzc}{OT1}{pzc}{m}{it}
\definecolor{maroon}{rgb}{0.5, 0.0, 0.0}
\definecolor{verde}{rgb}{0, 0.5, 0.5}
\newcommand{\red}[1]{\textcolor{red}{#1}}
\newcommand{\marginnote}[1]{\ifthenelse{\isodd{\thepage}}{\normalmarginpar}
{\reversemarginpar}\marginpar{\fbox{\parbox{15mm}{\sloppy\footnotesize \red{#1}}}}}
\let\tempone\itemize
\let\temptwo\enditemize
\let\temponenum\enumerate
\let\temptwonum\endenumerate
\renewenvironment{itemize}{\tempone\addtolength{\itemsep}{0.2\baselineskip}}{\temptwo}
\renewenvironment{enumerate}{\temponenum\addtolength{\itemsep}{0.2\baselineskip}}{\temptwonum}
\let\tempsec\section
\renewcommand{\section}{\par\medskip\tempsec}
\newcommand{\qee}{\parbox{5pt}{\hfill}\hfill $\triangle$}
\newenvironment{rem}{\begin{remqee}}{\qee\end{remqee}}
\newtheorem{thm}{Theorem}[section] 
\newtheorem{conj}[thm]{Conjecture}
\newtheorem{lemma}[thm]{Lemma} 
\newtheorem{prop}[thm]{Proposition}
\newtheorem{defin}[thm]{Definition}
\theoremstyle{definition}
\theoremstyle{remark}
\newtheorem{remqee}[thm]{Remark}
\newtheorem{example}[thm]{Example}
\newenvironment{remark}{\begin{remqee}}{\qee\end{remqee}}
\numberwithin{equation}{section}
\newcommand\rk{\operatorname{rk}}
\newcommand\Spec{\operatorname{Spec}}
\newcommand\iso{\kern.35em{\raise3pt\hbox{$\sim$}\kern-1.1em\to}\kern.3em}
\newcommand\Sym{{\mathcal{S}ym}}
\newcommand\F{{\mathcal F}}
\newcommand\Oc{{\mathcal O}}
\newcommand\Pc{{\mathcal P}}
\newcommand\M{{\mathcal M}}
\newcommand\Z{{\mathbb Z}}
\newcommand\C{{\mathbb C}}
\newcommand\Q{{\mathbb Q}}
\newcommand\K{{\mathbb K}}
\newcommand\td{\operatorname{td}}
\newcommand\ch{\operatorname{ch}}
\newcommand\U{{\mathcal U}}
\newcommand\Nc{{\mathcal N}}
\newcommand\Lcl{{\mathcal L}}
\newcommand\Ec{{\mathcal E}}
\newcommand\Fc{{\mathcal F}}
\newcommand\Ac{{\mathcal A}}
\newcommand\Ps{{\mathbb P}}
\newcommand\Gc{{\mathcal G}}
\newcommand\Xcal{{\spacecal X}}
\newcommand\Ycal{{\spacecal Y}}
\newcommand\Sc{{\spacecal S}}
\newcommand\Tc{{\spacecal T}}
\newcommand\Zc{{\spacecal Z}}
\newcommand\Vc{{\spacecal V}}
\newcommand\Wc{{\spacecal W}}
\newcommand\Hcal{{\spacecal H}}
\newcommand\Hc{{\mathcal H}}
\newcommand\Ucal{{\spacecal U}}
\newcommand\Qc{{\mathcal Q}}
\newcommand\Jc{{\mathcal J}}
\newcommand\Homsh{{{\mathcal H}om}}
\newcommand\Xf{{\mathfrak X}}
\newcommand\Sf{{\mathfrak S}}
\newcommand\nf{{\mathfrak n}}
\newcommand\Mf{{\mathfrak M}}
\newcommand\SSpec{\mathbb{S}\mathrm{pec}\,}
\newcommand\SHilb{\mathbb{SH}\mathrm{ilb}\,}
\newcommand{\gr}{\operatorname{gr}\,}
\newcommand{\tgr}{\widetilde{\operatorname{gr}}\,}
\newcommand{\cl}{\operatorname{cl}}
\newcommand{\redu}{{ }_{\mbox{\tiny bos}}}
\renewcommand{\Im}{\operatorname{Im}}
\newcommand{\Uf}{\mathfrak U}
\newcommand{\Ff}{\mathfrak F}
\begin{document}
\begin{center} \bf \Large  MODULI OF STABLE SUPERMAPS \end{center}

\thispagestyle{empty}  
\begin{center}{\sc Ugo Bruzzo}$^{ab}$    and {\sc Daniel Hern\'andez Ruip\'erez}$^{cd}$ \\[5pt]
\small
$^a$ 
Departamento de Matem\'atica, Instituto de Ci\^encias Exatas,  \\ Universidade Federal de Minas Gerais, Av.~Ant\^onio Carlos 6627,  \\ Belo Horizonte,
MG 30123‑970, Brazil \\
$^b$ IGAP (Institute for Geometry and Physics), Trieste, Italy  \\
$^c$ Departamento de Matem\'aticas  and IUFFYM (Instituto de F\'\i sica \\ Fundamental y   Matem\'aticas), Universidad de Salamanca,  \\ Plaza de la Merced 1-4, 37008 Salamanca, Spain \\
$^d$ Real Academia de Ciencias Exactas, F\'isicas y Naturales, Spain\\[5pt] 
Email: {\tt ubruzzo@ufmg.br,  ruiperez@usal.es}
\end{center}

\bigskip\bigskip
\begin{abstract} 
We review the notion of stable supermap from   SUSY curves to a fixed target superscheme $\Ycal$, and prove that  when $\Ycal$ is projective,
 stable supermaps are parameterized by a Deligne-Mumford superstack $ \Sf\Mf(\Ycal,\beta)$ with superschematic and separated diagonal. We characterize the bosonic reduction of this moduli superstack and see that it has a surjective morphism onto the moduli stack of stable maps from spin curves to the bosonic reduction $Y$ of $\Ycal$, whose fibers are linear schemes; for this reason, $ \Sf\Mf(\Ycal,\beta)$ is not proper unless such linear schemes reduce to a point. Using Manin-Penkov-Voronov's super Grothendieck-Riemann-Roch theorem we also make a formal computation of the virtual dimension of $ \Sf\Mf(\Ycal,\beta)$, which agrees with the characterization of the bosonic reduction just mentioned and with the dimension formula for the case of bosonic target existing in the literature.

\end{abstract}

\vfill
\noindent\parbox{\textwidth}{\small \baselineskip=14pt
\noindent\parbox{.8\textwidth}{\hrulefill} \\
\noindent February 19, 2026 \\
\noindent AMS Subject classification  (2020)  {Primary: 14D23; Secondary: 14A20, 14H10, 14M30, 81T30, 83E30}  \\
\noindent Keywords: stable supermaps, moduli superstack, bosonic reduction, virtual dimension \\
\noindent U.B.: Research partly supported by PRIN 2022BTA242 ``Geometry of algebraic structures: moduli, invariants, deformations,''  INdAM-GNSAGA, and CNPq Bolsa de Produtividade em Pesquisa B 305343/2025-4.  \\
 D.H.R: Research partly supported by the  research projects  ``Espacios finitos y functores integrales'', MTM2017-86042-P (Ministerio de Econom\'{\i}a, Industria y Competitividad) and GIR 4139 of the University of Salamanca.
 }

 \newpage

\tableofcontents 
  
 \bigskip

\section{Introduction} 

Stable supermaps are a natural supergeometric generalization of the notion of stable map. A definition of stable supermap was given in \cite{BrHRMa23} in the algebro-geometric formalism, as a morphism from a SUSY (supersymmetric) curve with punctures to a fixed target superscheme, satisfying a stability condition (another definition was given by Ke\ss ler, Sheshmani and Yau in \cite{KSY-2022} in the symplectic approach, considering as target an ordinary (non-super) symplectic manifold). According to \cite{BrHRMa23}, there exists a moduli space of stable supermaps 
(for which, unless the contrary is stated, we understand that the target is a superscheme $\Ycal$), which is a superstack.
The data to define stable supermaps include a fixed supercycle $\beta$ in $\Ycal$; we momentarily denote by $ \Sf\Mf(\Ycal,\beta)$ the resulting superstack, omitting   genus and punctures from the notation.

 In this paper we study the superstack $ \Sf\Mf(\Ycal,\beta)$. The main results are three:
\begin{itemize} 
\item In section \ref{alg} we prove that it is a  Deligne-Mumford  superstack \cite[Def.\! 3.7]{BHRSS25}  with superschematic and separated diagonal  (Felder, Kazhdan and Polishchuk in \cite{FKPpubl} constructed the Deligne-Mumford superstack of stable SUSY curves. Our result implies that that superstack has as a superschematic and separate diagonal).
\item We identify its bosonic reduction as a fibration over the stack of stable maps from spin curves to $\Ycal$ (Section \ref{red}). A consequence of this fact is that the superstack $ \Sf\Mf(\Ycal,\beta)$  may not  be proper (but it is proper in some cases).
\item We conjecture a formula for the the virtual dimension of the superstack $ \Sf\Mf(\Ycal,\beta)$  by using    Manin-Penkov-Voronov's super Grothendieck-Riemann-Roch theorem (Section \ref{dim}). This computation needs to be supported by a perfect obstruction theory for the superstacks $ \Sf\Mf(\Ycal,\beta)$, and by  the resulting deformation theory. This will form the object of a future paper \cite{POT}. The virtual dimension we compute coincides with the formula   in
\cite{KSY-2022} when the target is bosonic (an ordinary scheme $Y$).
\end{itemize}

Among the necessary prerequisites, in Section \ref{alg}, following \cite{BrHRMa23},  we recall the basic definitions about supercycles in superschemes, and state the existence of proper push-forwards of the groups of supercycles modulo rational equivalence.  In Section \ref{char} we recall from the work of Manin-Penkov-Voronov
\cite{VMP}  the construction of a K-theory for superschemes and  the statement of  the super
Grothendieck-Riemann-Roch theorem, and give a detailed proof of it. 
The notions of algebraic supergeometry that we shall need were resumed in the introductory parts of \cite{BrHR21} and \cite{BrHRPo20}.   An introduction to superstacks is given in the companion paper \cite{BHRSS25}; see also \cite{CodViv17}. 


\smallskip
\noindent{\bf Acknowledgments.} We thank  Barbara Fantechi, Margarida Melo, Emanuele Pavia  and Andrea Ricolfi for useful discussions. D.H.R.~thanks GNSAGA-INdAM for support, and SISSA for hospitality and support. 

\section{The superstack of stable supermaps} \label{alg}

Stable supermaps, that is, maps from SUSY curves to a fixed target superscheme satisfying a stability condition, were defined in \cite{BrHRMa23}.
This defines a category fibered in groupoids which in  \cite{BrHRMa23} was shown to be a superstack.  In this section, after recalling the relevant definitions, we first  show that the superstack of stable supermaps is algebraic  and then Deligne-Mumford, and has a separated and schematic diagonal.  At first we shall review definitions and results about prestable and stable SUSY curves. We shall also recall the definition of supercycle and state the existence of proper push-forwards for the groups of supercycles modulo rational equivalence. 

\subsection{Stable and prestable SUSY curves}

Prestable and stable SUSY curves and morphisms between them were introduced in  \cite{FKPpubl} and further studied in \cite{BrHRMa23}.
We first define supercurves with punctures and then SUSY curves with punctures;  SUSY curves are supercurves with an additional structure (the superconformal structure). This differs from  the terminology of \cite{FKPpubl}, where
``supercurve''  refers to what we call a  SUSY curve. 

\begin{defin}\label{def:prestablecurve} A supercurve of arithmetic genus $g$ is a proper and Cohen-Macaulay morphism $f\colon \Xcal \to \Sc$ of superschemes of relative dimension $1|1$ (see \cite[Def.\! 2.21]{BrHRMa23}) whose bosonic fibers have arithmetic genus $g$.

 A prestable supercurve of arithmetic genus $g$ with punctures over a superscheme $\Sc$ is a supercurve $f\colon \Xcal \to \Sc$, together with:
\begin{enumerate}
\item a collection of disjoint closed sub-superschemes $\Xcal_i\hookrightarrow \U$ ($i=1,\dots,\nf_{NS}$), where $\U$ is the smooth locus of $f$, such that $\pi\colon \Xcal_i\to\Sc$ is an isomorphism for every $i$. These superschemes are called   \emph{Neveu-Schwartz (NS) punctures};
\item a collection of disjoint Cartier divisors $\Zc_j$ of relative degree 1 ($j=1,\dots,\nf_{RR}$), contained in $\U$. They are called   \emph{Ramond-Ramond (RR) punctures}. 
\end{enumerate}
These data must satisfy the following condition: if for every bosonic fiber $X_s$ of $f\colon \Xcal \to \Sc$ we write $x_{s,i}=X_i\cap X_s$ and $z_{s,j}=Z_j\cap X_s$,  then the pair $(X_s, D_s)$, where   $$D_s= \{x_{s,1}\dots x_{s,\nf_{NS}},z_{s,1}\dots, z_{s,\nf_{R}}\},$$ is a prestable $(\nf_{NS}+\nf_{RR})$-pointed curve of 
arithmetic genus $g$.\footnote{Here we adopt the definition of prestable curve as a connected  curve with at most nodal singularities, see \cite[13.2.2]{Ol16}.}

A prestable SUSY curve of arithmetic genus $g$ with punctures over a superscheme $\Sc$ is a  prestable supercurve $f\colon\Xcal\to\Sc$ of arithmetic genus $g$ with punctures together with
an epimorphism $\bar\delta\colon \Omega_f\to \omega_f(\Zc)$ of $\Oc_\Xcal$-modules, where $\Zc=\sum \Zc_j$, satisfying the conditions of \cite[Def.\! 5.8]{BrHRMa23}, that is, the composition
$$
\ker\bar\delta \hookrightarrow \Omega_f\xrightarrow{d} \Omega_f\wedge \Omega_f\xrightarrow{\bar\delta\wedge\bar\delta} \omega_f^{\otimes 2}(2\Zc)
$$
yields an isomorphism
$\ker\bar\delta \iso \omega_f^{\otimes 2}(\Zc)$
(see \cite[Prop. 3.5]{BrHR21}).

Morphisms of prestable supercurves (resp.~SUSY curves) $(f\colon\Xcal \to \Sc, \{\Xcal_i\}, \{\Zc_j\}) \to (f'\colon\Xcal' \to \Sc, \{\Xcal'_i\}, \{\Zc'_j\})$ over $\Sc$ are defined as morphisms of  $\Sc$-superschemes $g\colon \Xcal \to \Xcal'$  preserving the punctures (resp.~preserving the punctures and the morphism $\bar\delta$).
By this we mean:
\begin{itemize}
\item $g$ maps surjectively $\{\Xcal_i\}$ onto $\{\Xcal'_i\}$;
\item the inverse  image $g^{-1}$ establishes  a bijection between the RR punctures $\{\Zc'_j\}$ of  $\Xcal'$ and the RR punctures of the $\Xcal$ contained in the inverse image $g^{-1}(\U')\hookrightarrow \U$ of the smooth locus $\U'$ of $f'$;
\item the morphism $g^\ast\colon g^\ast\Omega_{f'}\to \Omega_f$ maps $\ker g^\ast(\bar\delta')$ to $\ker \bar\delta$,  that is,  the diagram
$$
\xymatrix{
& g^\ast \ker\bar\delta'\ar[d]\ar[r]& g^\ast\Omega_{f'}\ar[d]^{g^\ast}\ar[r]^(.3){g^\ast(\bar\delta')} & g^\ast\omega_{f'}(g^{-1}(\Zc'))\ar[r]& 0
\\
0\ar[r]&\ker \bar\delta\ar[r]&\Omega_f\ar[r]^{\bar\delta}&\omega_f(\Zc)\ar[r]&0
}
$$
commutes.
\end{itemize}
\end{defin}
The terminology ``Neveu-Schwarz'' and ``Ramond-Ramond'' comes from string theory; here we are not dealing with those aspects.

\begin{rem}
Definition \ref{def:prestablecurve} coincides  with \cite[Def.\,3.9]{BrHR21} for smooth supercurves. We know from \cite[Rem.\! 5.13]{BrHRMa23} that in the smooth case, the number of RR punctures of the two curves  must be the same. In the singular case, $\Xcal$ may have also RR punctures not contained in $g^{-1}(\U')$. As we shall see, if $g\colon \Xcal \to \Xcal'$ is a morphism between prestable curves over a field $k$ and $x\in X'$ is a node, we may have RR punctures in $\Xcal$ contained in the fibre $g^{-1}(x)$, so that $\Xcal$ has more RR punctures than $\Xcal$. Concerning  the NS punctures the definition implies that $\nf_{NS}\geq\nf'_{NS}$. 
\end{rem}

To define stable SUSY curves we make use of two facts proved in \cite{FKPpubl}. 
The first is that given a prestable SUSY curve  $f\colon \Xcal\to\Sc$ (Definition \ref{def:prestablecurve}), every NS puncture $\Xcal_i\hookrightarrow \Xcal$  corresponds to a Cartier positive superdivisor $\Wc_i\hookrightarrow \Xcal$ with the same support \cite[Def.\! 2.13]{FKPpubl}. The second   is that if $i\colon \Ucal\hookrightarrow \Xcal$ is the smooth locus of $f\colon \Xcal\to\Sc$, the sheaf
$$
\omega_{\Xcal/\Sc}^2:= i_\ast i^\ast \omega_{\Xcal/\Sc}^{\otimes 2}=i_\ast (\omega_{\Ucal/\Sc}^{\otimes 2})\,,
$$
where $\omega_{\Xcal/\Sc}=\omega_f$ is the relative dualizing sheaf, is locally free, i.e., it  is a  {line bundle} \cite[Thm.\! 4.1]{FKPpubl}.

\begin{defin}\label{def:stablecurve} A stable SUSY curve with punctures is a prestable SUSY curve $f\colon \Xcal\to\Sc$ with punctures (Definition \ref{def:prestablecurve}), such that the line bundle
$$
\omega_{\Xcal/\Sc}^2(\Wc+\Zc)\,,
$$
where $\Wc=\sum_i\Wc_i$ and $\Zc=\sum_j\Zc_j$, is relatively strongly ample with respect to $f\colon\Xcal\to\Sc$ (\cite[Subsect.\! 2.5]{BrHRPo20} or \cite[Def.\! 4.2]{FKPpubl}).
\end{defin}

Following \cite{FKPpubl} we  can now   define the category fibered in groupoids  of stable  SUSY curves with $\nf_{NS}$ NS punctures and $\nf_{RR}$ RR punctures $p\colon \Mf_{g,\nf_{NS},\nf_{RR}}\to \Sf$. 

\begin{prop}{\rm \cite{FKPpubl} }\label{prop:moduli}  
The category fibered in groupoids  $p\colon \Mf_{g,\nf_{NS},\nf_{RR}}\to \Sf$ 
of stable SUSY curves with $\nf_{NS}$ NS punctures and $\nf_{RR}$ RR punctures  is a  proper and smooth Deligne-Mumford superstack.
\qed
\end{prop}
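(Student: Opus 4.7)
The plan is to mimic the classical construction of the Deligne-Mumford stack of stable pointed curves, adapted to the super setting, using the relatively strongly ample line bundle provided by Definition \ref{def:stablecurve}. Given a stable SUSY curve $f\colon\Xcal\to\Sc$ with punctures, I expect some sufficiently high power $L=\omega_{\Xcal/\Sc}^{2r}(r(\Wc+\Zc))$ to be relatively strongly ample and $R^{>0}f_\ast L=0$, so that $f_\ast L$ is locally free on $\Sc$ of rank depending only on the discrete invariants $(g,\nf_{NS},\nf_{RR},r)$. This will produce a closed immersion of $\Xcal$ into a relative projective superspace $\Ps^{N|M}$ of fixed numerical type, and reduce the moduli problem to a quotient of a super Hilbert-type parameter space.

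Next I would realize $\Mf_{g,\nf_{NS},\nf_{RR}}$ as such a quotient superstack: inside the super Hilbert scheme parametrizing closed sub-superschemes of $\Ps^{N|M}$ with the correct super Hilbert polynomial, together with the flag-type data of the NS and RR puncture divisors, I would carve out a locally closed sub-superscheme $\Hc$ over which the universal family is a stable SUSY curve---the existence of a superconformal morphism $\bar\delta$ satisfying Definition \ref{def:prestablecurve}, smoothness along the punctures, the pluricanonical condition and stability are all open or locally closed. Then $\Mf_{g,\nf_{NS},\nf_{RR}}\simeq[\Hc/\bG]$ as a quotient by the natural action of the general linear supergroup $\bG=\op{GL}(N+1|M)$ changing the pluricanonical basis, which exhibits it as an algebraic superstack with smooth atlas $\Hc\to\Mf_{g,\nf_{NS},\nf_{RR}}$.

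Upgrading from algebraic to Deligne-Mumford comes down to showing that the automorphism sheaf of every stable SUSY curve is finite and unramified. The bosonic reduction of a stable SUSY curve is a stable pointed spin curve, whose automorphism group is finite by the classical Deligne-Mumford theorem; the remaining odd infinitesimal automorphisms are controlled by the super-tangent cohomology of the appropriate twisted super-tangent sheaf, which one expects to vanish in odd degree precisely because of the strong ampleness of $\omega_{\Xcal/\Sc}^2(\Wc+\Zc)$. Smoothness then follows from vanishing of the $\Ext^2$ obstruction groups of the super-cotangent complex with values in $\Oc_\Xcal$---as expected since $\Xcal$ has relative dimension $1|1$---so that every infinitesimal deformation lifts. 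The same analysis shows the diagonal is unramified, and being a closed sub-supergroup of a projective general linear supergroup it is in addition separated and superschematic.

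The main obstacle is properness, which I would verify by the valuative criterion over super-DVRs. Given a family over the generic super-point of a super-DVR $R$, its bosonic reduction extends uniquely by the classical stable reduction theorem for pointed spin curves; one must then lift this extension to a SUSY curve over $\Spec R$ by propagating the superconformal structure $\bar\delta$ and the puncture divisors $\Wc,\Zc$ across the special fibre, controlling odd nilpotents and the behaviour of $\bar\delta$ at nodes where RR punctures may collide. This super version of stable reduction is by far the most delicate step, because the interplay between the divisor $\Zc$ and the kernel of $\bar\delta$ forces one to track how the superconformal datum degenerates at the boundary; it is the content of the analysis carried out in \cite{FKPpubl}, which together with the previous smoothness and Deligne-Mumford arguments yields the full statement of the proposition.
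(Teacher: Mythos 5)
The first thing to note is that the paper does not prove this proposition at all: it is imported verbatim from \cite{FKPpubl} (hence the bare \qed), so there is no internal proof to compare yours against. Judged on its own terms, your sketch is a reasonable reconstruction of the Felder--Kazhdan--Polishchuk strategy, and it closely parallels the route this paper itself takes for the \emph{stronger} statement about stable supermaps: pluricanonical embedding into a relative projective superspace, a super Hilbert scheme with puncture data carved out by open/locally closed conditions, the superconformal datum $\bar\delta$ parametrized by a Hom-superscheme, a quotient by $\mathbb{G}L_{M|N}$ as smooth atlas (Theorem \ref{thm:algstack}), and the Deligne--Mumford property via vanishing of global infinitesimal automorphisms (Proposition \ref{prop:infautomor}, which for stable SUSY curves is exactly \cite[Prop.~3.11]{FKPpubl}, and Theorem \ref{thm:DMstack}). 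So the architecture of your argument is sound and matches both the cited proof and the paper's own later arguments.

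Two caveats. First, your treatment of properness is not a proof: you reduce it to ``the super version of stable reduction \ldots the content of the analysis carried out in \cite{FKPpubl},'' which, for a statement that \emph{is} the FKP theorem, is circular. Properness is precisely the hard and genuinely supergeometric part (extending $\bar\delta$, the puncture divisors, and the odd structure sheaf across the special fibre of a super-DVR; note the bosonic reduction is a stack of \emph{spin} curves, so even the bosonic extension needs the spin structure, not just classical stable reduction), and a blind proof attempt has to either carry it out or clearly mark it as assumed. Second, your closing claim that ``the same analysis shows the diagonal is \ldots separated and superschematic,'' justified by viewing automorphism groups as closed sub-supergroups of a projective general linear supergroup, is both overreaching and incorrectly argued: this property is \emph{not} in \cite{FKPpubl} --- it is one of the new contributions of the present paper (Lemma \ref{lem:represdiag}, Theorem \ref{thm:algstack} and the remark following it), where superschematicity comes from representing the isomorphism functor inside a Hilbert superscheme and cutting out a closed locus by a sheaf-theoretic vanishing condition, and separatedness comes from the separatedness of the Hilbert superscheme \cite[Thm.~4.3]{BrHRPo20}, not from any embedding of the automorphism group into a linear supergroup.
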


\subsection{Supercycles}
As in the case of stable maps, the definition of stable supermap involves fixing a 1-cycle class in the target. So we need a theory of cycles for superschemes. Some basic elements for such a theory were laid down in \cite{BrHRMa23}; here we recall the basic definitions and the existence of proper pushforwards.

We start by giving $\Z^2$ a superring structure   writing it as $\Z\oplus\Pi\Z$ (here $\Pi$ is the parity change operator), so that\footnote{Diffferently from the convention in \cite{BrHRMa23}, we write  $m-\Pi n$ instead of $m+\Pi n$, and analogously for supercycles in the next definition, to conform with the convention in \cite{VMP}.}
$$(m-\Pi n)(m'-\Pi n') = (mm'+nn' - \Pi (mn'+m'n)).$$
 
 \begin{defin}  Ler $\Xcal$ be a  superscheme.
An $h$-supercycle of $\Xcal$ is a finite sum
 $$
\alpha={\textstyle \sum_i} (m_i -  {\Pi }n_i) [Y_i]
$$
 where $m_i-\Pi n_i \in\Z^2$ and the $Y_i$ are closed subvarieties of $X$ of dimension $h$. The set $Z_h(\Xcal)$  of   $h$-supercycles is a free $\Z_2$-graded module over $\Z^2$.
The group of supercycles of $\Xcal$ is the bigraded $\Z^2$-module
$$
Z_\bullet (\Xcal)=\bigoplus_{h=0}^m Z_h(\Xcal) = Z_\bullet (X) \oplus \Pi Z_\bullet (X)\,.
$$\end{defin}
$Z_\bullet (\Xcal)$   has a natural ring structure induced by  the ordinary intersection product.
This definition of the group of supercycles should be compared with 
   Manin-Penkov-Voronov's definition of K-theory rings for superschemes, $K^S(\Xcal) = K(X) \oplus \Pi K(X)$ (see Section \ref{char} and \cite{VMP}).

\begin{defin}\ 
\begin{enumerate}
\item A supercycle
  $\alpha\in Z_h(\Xcal)$ is rationally equivalent to zero if there are $t$  sub-supervarieties $\delta_i\colon\Wc_i\hookrightarrow\Xcal$  of even dimension $h+1$ and   odd dimension  $s=0$ or $1$   and nonzero rational even superfunctions $g_i\in \K(\Wc_i)^\ast$ such that
$$
\alpha=\sum_{i=0}^t \delta_{i\ast}\operatorname{div}(g_i)\,.
$$
\item  The $\Z^2$-module of $h$-supercycles modulo rational equivalence  is
 $$
A_h(\Xcal)= Z_h(\Xcal)/W_h(\Xcal)
$$
where $W_h(\Xcal)\subset Z_h(\Xcal)$ is the  graded $\Z^2$-submodule  formed by the 
 $h$-supercycles rationally equivalent to zero.
 \end{enumerate}
 \end{defin}
 \begin{prop}
 If $f\colon \Xcal \to \Ycal$ is a proper morphism of superschemes, there is a pushforward morphism $$f_\ast\colon A_\bullet(\Xcal) \to A_\bullet(\Ycal) \,.$$
\end{prop}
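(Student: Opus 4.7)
My plan is to mirror the classical Fulton construction of proper pushforward of cycles (\emph{Intersection Theory}, Ch.~1, \S1.4), adapting each ingredient to the supergeometric setting. The key observation is that a supercycle $\sum_i(m_i-\Pi n_i)[Y_i]$ is already presented as a $\Z^2$-linear combination of ordinary subvarieties of the bosonic reduction $X$, so the decomposition $Z_\bullet(\Xcal)=Z_\bullet(X)\oplus\Pi Z_\bullet(X)$ lets one reduce the \emph{definition} of $f_\ast$ to the classical case. Since properness is a topological condition, the bosonic reduction $f_{\bos}\colon X\to Y$ of $f$ is proper, so the classical pushforward $f_{\bos,\ast}\colon Z_\bullet(X)\to Z_\bullet(Y)$ is available. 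I would then define $f_\ast$ on $Z_\bullet(\Xcal)$ by declaring it to coincide with $f_{\bos,\ast}$ on each summand and extending by $\Z^2$-linearity so that $f_\ast$ commutes with $\Pi$; this visibly respects the grading by dimension.

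The substance of the proposition is that $f_\ast$ descends to rational equivalence, i.e.\ $f_\ast(W_h(\Xcal))\subseteq W_h(\Ycal)$. It is enough to check this on generators $\delta_\ast\operatorname{div}(g)$, where $\delta\colon\Wc\hookrightarrow\Xcal$ is a sub-supervariety of even dimension $h+1$ and odd dimension $s\in\{0,1\}$, and $g\in\K(\Wc)^\ast$ is a nonzero even superfunction. The morphism $f\circ\delta\colon\Wc\to\Ycal$ is proper, and I would analyse it via its scheme-theoretic image $\Wc'\hookrightarrow\Ycal$, splitting into the two standard cases on the bosonic reductions. If $\dim W'<\dim W$, I would argue that $f_\ast(\delta_\ast\operatorname{div}(g))=0$ by imitating the classical proof summand by summand on $Z_\bullet(X)$ and on $\Pi Z_\bullet(X)$. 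If $\dim W'=\dim W$, then $\Wc\to\Wc'$ is generically finite, $\K(\Wc)/\K(\Wc')$ is a finite $\Z_2$-graded field extension, and the goal is to establish a super-norm identity of the form
\[
(f\circ\delta)_\ast\operatorname{div}(g)=\delta'_\ast\operatorname{div}(N(g)),
\]
whose right-hand side is again a generator of $W_h(\Ycal)$.

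The hard part is this last identity. When $\Wc$ has odd dimension one, the even superfunction $g$ decomposes locally in terms of the nilpotent odd generator, and $\operatorname{div}(g)$ has both a bosonic part and a genuine $\Pi$-part; verifying that both transform compatibly with $f_\ast$ and with the super-norm $N$ requires controlling the sign conventions in the ring law $(m-\Pi n)(m'-\Pi n')=(mm'+nn')-\Pi(mn'+m'n)$ that governs the $\Z^2$-action. Once this super-norm identity is in place, the remainder is formal: on the $\Z$-summand the claim reduces to the classical proper pushforward $A_\bullet(X)\to A_\bullet(Y)$ for the bosonic reductions, and on the $\Pi\Z$-summand to the same classical statement twisted by $\Pi$; combining them produces the desired well-defined homomorphism $f_\ast\colon A_\bullet(\Xcal)\to A_\bullet(\Ycal)$.
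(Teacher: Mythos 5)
First, a point of comparison: the paper itself contains no proof of this proposition --- it is stated as a recollection from \cite{BrHRMa23}, where the theory of supercycles and their pushforwards is developed. So your attempt can only be measured against what such a proof must contain, and there it falls short in two concrete places. The central issue is that you correctly isolate the super-norm identity $(f\circ\delta)_\ast\operatorname{div}(g)=\delta'_\ast\operatorname{div}(N(g))$ as ``the hard part,'' but you never prove it: the proposal says ``the goal is to establish'' it and then declares everything after it formal. Worse, the norm itself is not yet defined in your setting. When $\Wc$ has odd dimension $1$, the superfunction field $\K(\Wc)$ is not a field --- it contains nilpotents --- so ``$\K(\Wc)/\K(\Wc')$ is a finite $\Z_2$-graded field extension'' is not meaningful as stated, and the existence and multiplicativity of a super-norm on even elements, compatible with $\Z^2$-valued orders, is precisely the supergeometric content that has to be supplied. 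The case in which $\Wc\to\Wc'$ drops the odd dimension (odd-dimension-$1$ source, bosonic schematic image) is symptomatic: there $\operatorname{div}(N(g))$ would be a purely even cycle, while $(f\circ\delta)_\ast\operatorname{div}(g)$ has a $\Pi$-part, which would then have to be shown rationally trivial in $\Ycal$ by some other device; your outline does not address this.

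The second gap is the closing reduction, which is too naive: the subgroup $W_h(\Xcal)$ is \emph{not} $W_h(X)\oplus\Pi W_h(X)$, so the claim that the statement splits into ``the classical pushforward on the $\Z$-summand and the same statement twisted by $\Pi$ on the $\Pi\Z$-summand'' does not hold. For a generator $\delta_\ast\operatorname{div}(g)$ with $\Wc$ of odd dimension $1$, write $\Oc_\Wc=\Oc_W\oplus\Pi\Lcl$; the $\Z^2$-valued order $\ord_V(g)=m-\Pi n$ has its even part $m$ computed classically from $\Oc_{W,V}/(g)$ but its $\Pi$-part $n$ computed from lengths of $\Lcl_V/g\Lcl_V$, where $\Lcl$ is a generically rank-one sheaf that need not be invertible along the singular locus. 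The $\Pi$-component of $\operatorname{div}(g)$ is therefore not, in general, the divisor of a rational function on $W$, and there is no reason for it to lie in $W_h(X)$ by itself; the even and $\Pi$ components of a super rational equivalence are correlated, and invariance must be proved for the pair simultaneously. Your definition of $f_\ast$ at the level of $Z_\bullet$ (componentwise via $f_{\bos,\ast}$, extended $\Z^2$-linearly) is fine and agrees with what one expects, but the descent to $A_\bullet$ --- the actual assertion of the proposition --- rests entirely on the unproven norm identity together with a correct treatment of the coupled $\Pi$-part, so the argument as given is incomplete.
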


\subsection{Stable supermaps}

Actually the  definition of stable supermap  we use in this paper  would be   different from  the one given in  \cite[Def.~6.1]{BrHRMa23} but it coincides with it when the target is a projective superscheme.  So we consider only the case of a projective target $\Ycal$.

\begin{defin}\label{def:stablemap2} Let $\Ycal$ be a projective superscheme and $\beta\in A_1(\Ycal$) a rational equivalence class of supercycles \cite{BrHRMa23}. 
A stable supermap $F$ to $\Ycal$  over a superscheme $\Sc$ of class $\beta$ with $\nf_{NS}$ NS punctures and $\nf_{RR}$ RR punctures is given by the following set of data:
\begin{enumerate}
\item A prestable SUSY curve $(f\colon\Xcal\to\Sc, \{\Xcal_i\},\{\Zc_j\}, \bar\delta)$ over $\Sc$ with $\nf_{NS}$ Neveu-Schwarz punctures and $\nf_{RR}$ Ramond-Ramond punctures (Definition \ref{def:prestablecurve}).
\item A superscheme morphism $\phi\colon\Xcal\to \Ycal$ such that $\phi_\ast[\Xcal_s]=\beta$ for every closed point $s\in S$.
\item If we denote as above by $\Wc_i$ the superdivisor corresponding to the Neveu-Schwarz puncture $\Xcal_i$ according to \cite[Def.\! 2.13]{FKPpubl}, the even line bundle
$$
\Lcl_F:=\omega_{\Xcal/\Sc}^2\big(\sum_{1\le i\le \nf_{NS}} \Wc_i + \sum_{1\le j\le \nf_{RR}} \Zc_j\big)\otimes \phi^\ast \Lcl_\Ycal^{\otimes 3},
$$ 
where $\Lcl_\Ycal = \Oc_\Ycal(1)$,
is  strongly relatively ample with respect to $f\colon\Xcal\to\Sc$ (\cite[Subsect.\! 2.5]{BrHRPo20} or \cite[Def.\! 4.2]{FKPpubl}).
\end{enumerate}
\end{defin}

When $\Ycal$ reduces to a   point, we recover the definition of stable SUSY curve with punctures. 
We now fix nonnegative integers $g$, $\nf_{NS}$ and $\nf_{RR}$ and a 1-supercycle $\beta\in A_1(\Ycal)$.  
As in \cite[Def.\! 6.5]{BrHRMa23} we have:

\begin{defin}\label{def:CFGsmaps}  The category fibered in groupoids 
$$
\Sf\Mf_{g,\nf_{NS},\nf_{RR}}(\Ycal,\beta) \xrightarrow{p} \Sf
$$
of $\beta$-valued stable supermaps of arithmetic genus $g$ with $\nf_{NS}$ Neveu-Schwarz  punctures and $\nf_{RR}$ Ramond-Ramond punctures into $\Ycal$ 
is given by the following data: 
\begin{itemize}\item Objects are $\beta$-valued stable supermaps $\Xf=((f\colon\Xcal\to\Sc, \{\Xcal_i\},\{\Zc_j\}, \bar\delta), \phi)$\footnote{We use a simplified notation here; some of the relevant  data are left implicit.} over a superscheme $\Sc$ into $\Ycal$, of arithmetic genus $g$, with $\nf_{NS}$ Neveu-Schwarz punctures and $\nf_{RR}$ Ramond-Ramond punctures. 
\item Morphisms are cartesian diagrams
\begin{equation}\label{eq:cartesian}
\xymatrix{
\Xf'\ar[r]^\Xi\ar[d]^g & \Xf \ar[d]^f\\
\Tc\ar[r]^\xi & \Sc
}
\end{equation}
that is, diagrams inducing an isomorphism $\Xcal'\iso \xi^\ast \Xcal:=\Xcal\times_{\Sc}\Tc$ of superschemes over $\Tc$ compatible with the punctures and the morphisms $\bar\delta$ and such that $\phi'=\phi\circ\Xi$. 
The functor $p$  maps a stable supermap to the base superscheme  and the morphism $\Xi$ to the base morphism $\xi$. The pullback $\xi^\ast\Xf\to\Tc$ is given by the fiber product $f_\Tc\colon\Xcal\times_{\Sc}\Tc\to\Tc$ and the fiber products of the data $\{\Xcal_i\},\{\Zc_j\}, \bar\delta$ (thus providing  a natural ``cleavage'').\footnote{The fiber product of $\bar\delta$ is a derivation of the same kind due to \cite[Prop.\! 2.26]{BrHRMa23}.} \end{itemize}
\end{defin}

\begin{remark}
Condition (3) of Definition \ref{def:stablemap2} implies that the prestable SUSY curve associated with a stable supermap is projective.
\end{remark}

Since \'etale descent data for stable supermaps  and  for morphims to $\Ycal$ are effective, we   see that descent data for $\Sf\Mf_{g,\nf_{NS},\nf_R}(\Ycal,\beta)$ are effective. One   checks that the isomorphisms between two objects of $\Sf\Mf_{g,\nf_{NS},\nf_R}(\Ycal,\beta)$ form a sheaf in the \'etale topology of superschemes, so that the category fibered in groupoids $\Sf\Mf_{g,\nf_{NS},\nf_R}(\Ycal,\beta)$ is a \emph{superstack} --- see \cite{BHRSS25}.

\subsection{The superstack of stable supermaps is algebraic}

In this subsection we prove that 
the superstack of stable supermaps is algebraic \cite[Def.\! 3.14]{BHRSS25}. To do that we follow the steps of the proof of the analogous statement  about stable maps of schemes given in \cite[Thm.\! 13.3.15]{Ol16}. To simplify notation we   write $\Sf\Mf(\Ycal,\beta):=\Sf\Mf_{g,\nf_{NS},\nf_{RR}}(\Ycal,\beta)$ omitting references to genus and punctures.

The first step is the following:
\begin{lemma}\label{lem:represdiag}
Every morphism $\Sc\to \Sf\Mf(\Ycal,\beta)$ from a superscheme $\Sc$ is superschematic. Equivalently, the diagonal 
$$
\Sf\Mf(\Ycal,\beta) \xrightarrow{\Delta} 
\Sf\Mf(\Ycal,\beta)\times \Sf\Mf(\Ycal,\beta)
$$
is superschematic \cite[Lemma 3.6]{BHRSS25}.
\end{lemma}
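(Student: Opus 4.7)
The plan is to use the equivalence provided by \cite[Lemma 3.6]{BHRSS25}: showing that the diagonal $\Delta$ is superschematic is equivalent to showing that for every superscheme $\Sc$ and every two objects $\Xf_1,\Xf_2 \in \Sf\Mf(\Ycal,\beta)(\Sc)$, the \'etale sheaf $\Sisom_\Sc(\Xf_1,\Xf_2)$ on $\Sc$-superschemes --- which sends $\xi\colon\Tc\to\Sc$ to the set of isomorphisms $\xi^\ast\Xf_1\iso\xi^\ast\Xf_2$ of stable supermaps to $\Ycal$ --- is representable by a separated superscheme over $\Sc$. I would construct this representing superscheme in two steps: first for the underlying prestable supercurves, then by imposing the extra compatibility conditions.

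For the first step, condition (3) of Definition \ref{def:stablemap2} supplies a relatively strongly ample line bundle on each $\Xcal_i/\Sc$, so $\Xcal_i\to\Sc$ is projective. This allows us to invoke the existence of the super Hilbert scheme of $\Xcal_1\times_\Sc\Xcal_2$ over $\Sc$ (the supergeometric analogue of Grothendieck's Hilbert scheme, obtained via super Grassmannians of quotients of sufficiently twisted direct images). An isomorphism $\Xi\colon\Xcal_{1,\Tc}\iso\Xcal_{2,\Tc}$ is determined by its graph $\Gamma_\Xi\hookrightarrow\Xcal_{1,\Tc}\times_\Tc\Xcal_{2,\Tc}$, and the condition that both projections $\Gamma_\Xi\to\Xcal_{i,\Tc}$ be isomorphisms is open on the Hilbert scheme. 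This realizes $\Sisom_\Sc(\Xcal_1,\Xcal_2)$ as an open sub-superscheme, hence as a separated superscheme over $\Sc$.

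The second step imposes the remaining conditions defining an isomorphism of stable supermaps, each cutting out a (locally closed) sub-superscheme of $\Sisom_\Sc(\Xcal_1,\Xcal_2)$. First, preserving the NS punctures $\{\Xcal_i\}$ and the RR divisors $\{\Zc_j\}$ is a closed condition, given by equalities of sub-superschemes of the relative supercurve. Second, compatibility with the SUSY structures $\bar\delta_1,\bar\delta_2$ is a closed condition, being the vanishing of a morphism of coherent $\Oc$-modules. Third, the equation $\phi_2\circ\Xi=\phi_1$ is a closed condition: since $\Ycal$ is projective hence separated, the locus in $\Xcal_{1,\Tc}$ on which $\phi_1$ and $\phi_2\circ\Xi$ agree is a closed sub-superscheme, and demanding it to coincide with the whole relative supercurve is a closed condition on the base. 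Intersecting these, $\Sisom_\Sc(\Xf_1,\Xf_2)$ is representable by a separated superscheme over $\Sc$.

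The main obstacle is the invocation of super Hilbert scheme representability for projective morphisms in the first step; this is the essential technical input, and while it is the natural supergeometric transcription of Grothendieck's construction via super Grassmannians of modules (already in use in the paper) combined with flattening arguments, it needs either a careful reference to \cite{BHRSS25} or a short bespoke argument. Once this is in place, the remaining closedness verifications for the punctures, the SUSY structure, and the $\phi$-compatibility condition are routine.
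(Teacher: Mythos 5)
Your architecture is essentially the paper's (both are superizations of the proof of \cite[Thm.\! 13.3.15]{Ol16}): represent the sheaf of isomorphisms of the underlying punctured SUSY curves by a superscheme, then cut out compatibility with the maps to $\Ycal$. One difference is dispatchable: the super Hilbert scheme input you single out as the main obstacle is already available --- the Hilbert superscheme for projective morphisms is \cite[Thm.\! 4.3]{BrHRPo20} (used by the paper itself in the proof of Theorem \ref{thm:algstack}), and in fact the paper does not rebuild the Isom superscheme via graphs at all: it directly quotes the superscheme of isomorphisms \cite[Cor.\! 4.27]{BrHRPo20} together with the argument of \cite[5.2]{FKPpubl} to get a superscheme $\Vc$ parameterizing isomorphisms \emph{as prestable SUSY curves with punctures}, so your entire first step and your closedness checks for the punctures and for $\bar\delta$ are subsumed in that citation. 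Your separatedness observation (open in a separated Hilbert superscheme) is likewise what the paper uses later in Theorem \ref{thm:algstack}.

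The genuine gap is in your third condition. You correctly note that the agreement locus $\Zc \subset \Xcal_\Tc$ of $\phi_1$ and $\phi_2\circ\Xi$ (the pullback of the diagonal of $\Ycal$) is a closed sub-superscheme, but you then assert that ``demanding it to coincide with the whole relative supercurve is a closed condition on the base.'' That assertion is not formal, and proving it is the actual content of the paper's proof: one must show that the subfunctor of $\Vc$ given by ``the ideal sheaf $\Jc$ of $\Zc$ pulls back to zero'' is representable by a closed sub-superscheme, and scheme-theoretic vanishing of a coherent ideal after base change is not automatically a closed condition. The paper cuts it out as follows: by \cite[Thm.\! 2.35 and Prop.\! 3.7]{BrHRPo20} choose $r$ so that $\Jc\otimes \Lcl_F^{\otimes r}$ and $\Lcl_F^{\otimes r}$ are relatively globally generated and $f_{\Vc\ast}(\Lcl_F^{\otimes r})$ is locally free with formation commuting with base change; then $\Jc_\Tc=0$ if and only if the induced map $t^\ast f_{\Vc\ast}(\Jc\otimes\Lcl_F^{\otimes r}) \to t^\ast f_{\Vc\ast}(\Lcl_F^{\otimes r})$ is zero, and since the target is locally free and base-change compatible, this locus is locally the zero scheme of finitely many homogeneous sections of a free module, hence a closed sub-superscheme of $\Vc$. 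Note that this is the one place where stability (relative strong ampleness of $\Lcl_F$, condition (3) of Definition \ref{def:stablemap2}) enters the lemma; your proposal, as written, never uses it. So your outline is correct, but the step you label routine is precisely the step that requires the ample-twist pushforward argument, and without it the representability of the Isom subfunctor is not established.
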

\begin{proof}  
Using results from \cite{FKPpubl}
we   adapt to our setting  the proof  of the similar statement given in \cite{Ol16}. Let $F:=(f\colon\Xcal\to\Sc, \{\Xcal_i\},\{\Zc_j\}, \bar\delta, \phi)$ be a stable supermap of genus $g$ over a superscheme $\Sc$. Consider the functor that associates with an $\Sc$-superscheme $\Tc\to\Sc$ the set of the automorphisms $\Xcal_\Tc \iso \Xcal_\Tc$ over $\Tc$ as prestable SUSY curves with punctures.
Using the existence of the superscheme of isomorphisms  \cite[Cor.\! 4.27]{BrHRPo20}, and proceeding as in \cite[5.2]{FKPpubl}, one shows that the functor is representable by an $\Sc$-super\-scheme $\Vc$. There is a canonical automorphism
$$
\sigma\colon \Xcal_\Vc \iso \Xcal_\Vc
$$
of prestable SUSY curves with punctures over $\Vc$. Let us consider the cartesian diagram
$$
\xymatrix{ \Zc \ar@{^(->}[r] \ar[d] & \Xcal_\Vc \ar[d]^{(\phi, \phi\circ\sigma)}
\\
\Ycal \ar@{^(->}[r]^(.4){\Delta} & \Ycal\times \Ycal\,.
}
$$
The fiber product $\spacecal{P}$ of the diagram
$$
\xymatrix{
& \Sc\ar[d]^{(F, F)}\\
\Sf\Mf(\Ycal,\beta) \ar[r]^(.35)\Delta &
\Sf\Mf(\Ycal,\beta)\times \Sf\Mf(\Ycal,\beta)
}
$$
is the subfunctor of $\Vc$ that with a point $t\colon \Tc\to \Vc$ associates the one-point set if the pullback $\Zc_\Vc\to \Xcal_\Vc$ is an isomorphism,  and the empty set otherwise.

Let $\Jc $ be the ideal sheaf of $\Zc\hookrightarrow \Xcal_\Vc$. By \cite[Thm.\! 2.35 and Prop.\! 3.7]{BrHRPo20} there is a positive integer $r$ such that $\Jc\otimes \Lcl_F^{\otimes r}$ and $\Lcl_F^{\otimes r}$ are generated by their global sections and $f_{\Vc \star}(\Lcl_F^{\otimes r})$ is a locally free graded $\Oc_\Vc$-module of finite rank, whose formation commutes with arbitrary base change. Then $\spacecal{P}$ can also be viewed as the subfunctor of $\Vc$ that to a point $t\colon \Tc\to \Vc$ associates the one-point set if the induced map $t^\ast(f_{\Vc\ast}(\Jc\otimes \Lcl_F^{\otimes r})) \to t_\ast f_\Vc^\ast \Lcl_F^{\otimes r}$  is the zero map and the empty set otherwise. We can see now that $\spacecal{P}$ is represented by a closed sub-superscheme of $\Vc$. Let $\Vc=\bigcup_i\Vc_i$ be an open covering such that $f_{\Vc_i\star}(\Lcl_F^{\otimes r})$ is free and choose a trivialization $\Oc_{\Vc_i}^{m|n}\iso [f_{\Vc \star}(\Lcl_F^{\otimes r})]_{|\Vc_i}$. Then, a choice of homogeneous generators of the restrictions of $f_{\Vc\ast}(\Jc\otimes \Lcl_F^{\otimes r})$ to  $\Vc_i$ yields homogeneous elements $g_i\in \Oc_{\Vc_i}^{m|n}$, and $\spacecal{P}_{|\Vc_i}$ is given by their zero locus.
\end{proof}

Let us consider now, for every integer $r$, the substack $\Uf_r$ of $\Sf\Mf(\Ycal,\beta)$ whose objects over a superscheme $\Sc$ are the stable supermaps $F:=(f\colon\Xcal\to\Sc, \{\Xcal_i\},\{\Zc_j\}, \bar\delta, \phi)$ satisfying the following condition:

  {\em  For every geometric point $s\in S$, the restriction of sheaf $\Lcl_F^{\otimes r}$ to the fiber $\Xcal_s$ is very ample and acyclic.}

This condition implies that $f_\ast(\Lcl_F^{\otimes r})$ is locally free of finite rank and that its formation commutes with arbitrary base change \cite[Sec.\! 2,3]{BrHRPo20}. It  also implies that $\Lcl_F^{\otimes r}$ is relatively very ample on $f\colon X \to \Sc$. 

\begin{lemma}\label{lem:ur} $\Uf_r$ is an open substack of $\Sf\Mf(\Ycal,\beta)$, and 
$$
\Sf\Mf(\Ycal,\beta) = \bigcup_{1\leq r} \Uf_r\,.
$$
\end{lemma}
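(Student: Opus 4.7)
The plan is to prove the two assertions in turn, following the strategy of the classical case \cite[Thm.\! 13.3.15]{Ol16} and substituting for each classical ingredient its super-geometric analogue from \cite{BrHRPo20}.

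To show that $\Uf_r$ is open, I would fix a morphism $\Sc \to \Sf\Mf(\Ycal,\beta)$ corresponding to a stable supermap $F=(f\colon\Xcal\to\Sc, \{\Xcal_i\},\{\Zc_j\}, \bar\delta, \phi)$ and verify that the subfunctor of $\Sc$ selecting those $\Tc \to \Sc$ for which the pullback lies in $\Uf_r$ is represented by an open sub-superscheme. Since the defining property is stable under base change and is tested on geometric fibers, openness reduces to two facts. First, the vanishing of $H^i(\Xcal_s,\Lcl_F^{\otimes r}\vert_{\Xcal_s})$ for $i>0$ is an open condition by fiberwise semicontinuity of cohomology dimensions for proper Cohen-Macaulay morphisms of superschemes \cite[Sec.\! 2,3]{BrHRPo20}. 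Second, once acyclicity holds over an open neighborhood, the pushforward $f_\ast\Lcl_F^{\otimes r}$ is locally free there and its formation commutes with base change, so very ampleness on each fiber (i.e.\ separation of points and tangent vectors by the sections) is also open, by the super-projective criteria recorded in \cite[Subsect.\! 2.5]{BrHRPo20}.

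For the equality $\Sf\Mf(\Ycal,\beta)=\bigcup_{r\geq 1}\Uf_r$, given a stable supermap $F$ over $\Sc$ and a geometric point $s\in S$, condition (3) of Definition \ref{def:stablemap2} says that $\Lcl_F$ is strongly relatively ample, so some tensor power $\Lcl_F^{\otimes r_s}\vert_{\Xcal_s}$ is very ample on the fiber $\Xcal_s$ and has vanishing higher cohomology there. The openness established above then implies that the $\Uf_{r_s}$-condition persists on an open neighborhood of $s$ in $S$. Covering $\Sc$ by such opens shows that every object of $\Sf\Mf(\Ycal,\beta)$ is, locally on its base, contained in some $\Uf_r$, which is what the equality asserts at the level of superstacks.

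The genuine difficulty is not the shape of the argument, which mirrors the scheme-theoretic proof closely, but having the classical toolkit—semicontinuity, cohomology and base change, openness of very-ample loci on fibers—available in the super setting; all of these have been developed in \cite{BrHRPo20} in the generality needed here, so the proof amounts to quoting them with some care, in particular checking that the statements really do apply to the proper Cohen-Macaulay family $f\colon\Xcal\to\Sc$ and to the even line bundle $\Lcl_F$ constructed from the stability data.
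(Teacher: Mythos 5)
Your overall architecture is the paper's: openness of the acyclicity condition via cohomology and base change from \cite[Sec.\! 2,3]{BrHRPo20}, openness of fiberwise very ampleness, and the covering statement deduced from the relative ampleness of $\Lcl_F$ guaranteed by condition (3) of Definition \ref{def:stablemap2}. The second half of your argument (pick $r_s$ per geometric point, use openness, cover the base) is exactly what the paper compresses into ``the second part is clear since $\Lcl_F$ is relatively ample,'' and is fine.

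The gap is in the openness of the very-ample locus. You assert that, once $f_\ast(\Lcl_F^{\otimes r})$ is locally free and its formation commutes with base change, very ampleness on fibers ``is also open, by the super-projective criteria recorded in \cite[Subsect.\! 2.5]{BrHRPo20}.'' That subsection supplies the \emph{definition} of (strong relative) ampleness, not an openness-in-families theorem; no such super statement is available off the shelf, and even in the classical setting this step is not a formality --- it is precisely \cite[4.7.1]{EGAIII-II}, a substantive theorem rather than a consequence of base change. The paper's proof supplies the missing super ingredient: by \cite[Prop.\! A.2]{FKPpubl}, the restriction $(\Lcl_F^{\otimes r})_{|\Xcal_s}$ is very ample on the superscheme fiber $\Xcal_s$ if and only if its bosonic reduction is very ample on $X_s$, which reduces the fiberwise condition to the bosonic situation where \cite[4.7.1]{EGAIII-II} applies. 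Without this reduction, your heuristic ``separation of points and tangent vectors by the sections'' is also insufficient as stated: for an immersion of \emph{superschemes} the sections must in addition generate the odd part of the structure sheaf (equivalently, separate odd tangent directions), which is exactly why passing to the bosonic reduction via the FKP equivalence is the efficient route. Inserting \cite[Prop.\! A.2]{FKPpubl} together with the citation of \cite[4.7.1]{EGAIII-II} at this point repairs the argument and makes it coincide with the paper's proof.
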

\begin{proof} By \cite[Prop.\! A.2]{FKPpubl} the sheaf $(\Lcl_F^{\otimes r})_{|\Xcal_s}$ is very ample on $\Xcal_s$  if and only if its bosonic reduction is. We can then apply \cite[4.7.1]{EGAIII-II} to conclude that the condition of $\Lcl_F^{\otimes r}$ being very ample on the fibers is open. By cohomology and base change \cite[Sec.\! 2,3]{BrHRPo20}, the vanishing of $H^1$ on fibers is also an open condition. One then has that $\Uf_r$ is an open substack of $\Sf\Mf(\Ycal,\beta)$. The second part is clear since $\Lcl_F$ is relatively ample.
\end{proof}

\begin{thm}\label{thm:algstack}  
The category fibered in groupoids
$\Sf\Mf_{g,\nf_{NS},\nf_{RR}}(\Ycal,\beta) \xrightarrow{p} \Sf$
 is an algebraic superstack with superschematic and separated diagonal.
\end{thm}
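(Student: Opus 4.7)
The plan is to combine Lemma~\ref{lem:represdiag} and Lemma~\ref{lem:ur} with an explicit construction of a smooth atlas for each open substack $\Uf_r$, following the template of the classical argument in \cite[Thm.\! 13.3.15]{Ol16} transposed to the super setting by means of the superprojective-bundle and cohomology-and-base-change machinery of \cite{BrHRPo20} together with the comparison between very-ampleness on fibres and on bosonic fibres from \cite{FKPpubl}. Concretely, the two remaining tasks are: (a) exhibiting, for every $r$, a smooth surjection $\Psc_r\to\Uf_r$ from a superscheme; and (b) upgrading the superschematicity of Lemma~\ref{lem:represdiag} to separatedness.

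For (a) I would first restrict to a connected component on which $\Ec_r:=f_\ast(\Lcl_F^{\otimes r})$ has constant $\Z/2$-graded rank $(m|n)$. By definition of $\Uf_r$, the bundle $\Lcl_F^{\otimes r}$ is strongly relatively very ample and $\Ec_r$ is locally free with formation compatible with arbitrary base change. Combining the evaluation morphism $f^\ast\Ec_r\twoheadrightarrow\Lcl_F^{\otimes r}$ with the structure map $\phi$ yields a closed immersion
\[
\iota_F\colon \Xcal\hookrightarrow \Ps(\Ec_r)\times_\Sc(\Ycal\times\Sc),
\]
and an \'etale-local trivialization of $\Ec_r$ further embeds $\Xcal$ inside $\Ps^{m-1|n}\times\Ycal$ over $\Sc$. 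I would then consider the super Hilbert superscheme $\Hcal_r$ of closed sub-superschemes of $\Ps^{m-1|n}\times\Ycal$ with the appropriate numerical type, available as a superscheme by the relative superprojective methods of \cite{BrHRPo20}. Inside $\Hcal_r$, the loci where the sub-superscheme is a prestable SUSY curve with the prescribed punctures, where $\Oc(1)\vert_\Xcal\simeq\Lcl_F$, and where $\phi_\ast[\Xcal_s]=\beta$ on every bosonic fibre, are locally closed: openness of smoothness away from the punctures follows from \cite[4.7.1]{EGAIII-II} combined with \cite[Prop.\! A.2]{FKPpubl}; flatness of the punctures and coincidence of line bundles are standard cohomological conditions; and $A_1$ is a discrete invariant of the bosonic fibre. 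Adding an ordering of the punctures and the trivialization of $\Ec_r$ produces an auxiliary superscheme $\Psc_r$ with a tautological morphism $\Psc_r\to\Uf_r$. Surjectivity is fppf-local existence of trivializations, and smoothness follows because the fibres of $\Psc_r\to\Uf_r$ are $\op{GL}(m|n)$-torsors of bases of $\Ec_r$. With Lemma~\ref{lem:represdiag} this gives algebraicity in the sense of \cite[Def.\! 3.14]{BHRSS25}.

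For (b) I would revisit the superscheme $\spacecal{P}$ constructed in Lemma~\ref{lem:represdiag}: that proof exhibits $\spacecal{P}$ as a \emph{closed} sub-superscheme of the Isom superscheme $\Vc\to\Sc$ cut out by the explicit homogeneous sections $g_i$. It therefore suffices to show that $\Vc\to\Sc$ is separated. This follows from the standard picture, now in the super setting: $\Vc$ sits as an open sub-superscheme of the relative Hom superscheme of two stable supermaps (itself realised via super Hilbert schemes of graphs inside $\Xcal\times_\Sc\Xcal$), and the latter is separated over $\Sc$ by the projective construction of \cite[Cor.\! 4.27]{BrHRPo20}. The extra conditions of preserving punctures, the superconformal derivation $\bar\delta$ and the morphism $\phi$ are closed conditions, so separatedness is preserved.

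I expect the main obstacle to be not any single step but the careful bookkeeping for the super Hilbert and super Isom schemes underlying both (a) and (b). In the ordinary setting one cites these as projective schemes constructed via Grassmannians of quotients of $f_\ast\Oc(N)$; the analogous statements in the super setting are available from \cite{BrHRPo20,FKPpubl}, but the $\Z/2$-graded verifications (flatness in odd directions, openness/closedness of the auxiliary conditions across odd parameters, and smoothness of the $\op{GL}(m|n)$-torsor in odd fibres) have to be checked one by one. Once those foundational pieces are in place, the remainder is a faithful transcription of Olsson's argument for ordinary stable maps.
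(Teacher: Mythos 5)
Your architecture is the same as the paper's: reduce via Lemma \ref{lem:ur} to the open substacks $\Uf_r$, invoke Lemma \ref{lem:represdiag} for superschematicity, build an atlas by rigidifying $f_\ast(\Lcl_F^{\otimes r})$ and embedding the curve into $\Ycal\times\Ps^{M-1|N}_\Sc$, exhibit $\Uf_r$ as a quotient of the resulting parameter superscheme by $\mathbb{G}L_{M|N}$, and deduce separatedness of the diagonal from the fact that $\spacecal{P}$ is closed in the isomorphism superscheme $\Vc$, which is separated because it sits inside a Hilbert superscheme (\cite[Thm.\! 4.3]{BrHRPo20}); your part (b) is essentially the paper's argument verbatim.

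There is, however, a genuine gap in your step (a): you treat ``being a prestable SUSY curve with the prescribed punctures'' as a locally closed \emph{condition} inside the Hilbert superscheme, but the punctures and, above all, the superconformal morphism $\bar\delta$ are additional \emph{data}, not properties of the closed sub-superscheme of $\Ps^{m-1|n}\times\Ycal$ (Definition \ref{def:prestablecurve}); morphisms in $\Uf_r$ are required to preserve them. Cutting out the locus of embedded curves that merely admit such structures yields a superscheme carrying no tautological family of stable supermaps, so the claimed ``tautological morphism $\Psc_r\to\Uf_r$'' does not exist as stated, and the $\mathbb{G}L(m|n)$-torsor description of the fibers is wrong, since the fibers would also have to account for the choices of punctures and of $\bar\delta$ (adding ``an ordering of the punctures'' does not help: the divisor $\Wc+\Zc$ is not recoverable from the embedded curve, only a linear equivalence class, and indeed the condition $\Oc(1)|_\Xcal\simeq\Lcl_F^{\otimes r}$ cannot even be formulated on the Hilbert superscheme before the punctures are parameterized, because $\Lcl_F$ depends on them). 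The paper repairs exactly this point by interposing a tower of auxiliary parameter superschemes over the Hilbert superscheme: symmetric products of the smooth locus for the NS punctures ($\Hcal_2$), the superscheme of relative positive divisors for the RR punctures ($\Hcal_3$, via \cite[Prop.\! 5.20]{BrHRPo20}), the Hom superscheme of morphisms $\Omega_{f}\to\omega_{f}(2\Zc)$ for $\bar\delta$ ($\Hcal_4$, via \cite[Prop.\! 3.18]{BrHRPo20}) with the open locus $\Hcal_5$ where the superconformal conditions hold, and finally the superscheme of isomorphisms $\Lcl_F^{\otimes r}\simeq\Nc$ ($\Hcal_6$), after which an open condition yields the atlas $\Ucal_r$ representing the rigidified functor. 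With these insertions your sketch becomes the paper's proof; without them, the atlas construction fails as written, and the failure is structural rather than a matter of $\Z/2$-graded bookkeeping.
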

\begin{proof} By Lemma \ref{lem:ur}, to prove that $\Sf\Mf(\Ycal,\beta)$ is an algebraic superstack it is enough to show that the stacks $\Uf_r$ are algebraic. Since they are open substacks of $\Sf\Mf(\Ycal,\beta)$, Lemma \ref{lem:represdiag} implies that every morphism from a superscheme to $\Uf_r$ is representable. Then we need only to prove that there is a surjective smooth morphism $\Ucal\to \Uf_r$ from a superscheme $\Ucal$.

Notice that if $F=(f\colon\Xcal\to\Sc, \{\Xcal_i\},\{\Zc_j\}, \bar\delta, \phi)$ is an object of $\Uf_r$, the coefficients of the super Hilbert polynomial of the even line bundle $(\Lcl_F)_{|\Xcal_s}$ are independent of the geometric point $s\in S$, and    the rank of the sheaf $f_\ast(\Lcl_F^{\otimes r})$ at every geometric point of $S$ is a constant pair $M|N$ of integers depending only  on those coefficients \cite[Prop.\! 4.19]{BrHRPo20}. 

Let $\F$ be the presheaf on superschemes defined by letting $\F(\Sc)$ be the set of isomorphism classes of pairs $(F,\sigma)$, where $F=(f\colon\Xcal\to\Sc, \{\Xcal_i\},\{\Zc_j\}, \bar\delta, \phi)$ is an object of $\Uf_r$, and $\sigma$ is an isomorphism $\sigma\colon f_\ast(\Lcl_F^{\otimes r})\iso \Oc_\Sc^{M|N}$. We then have an embedding $\gamma\colon\Xcal \hookrightarrow \Ps_\Sc^{M-1|N}$, and consequently an embedding 
$$
(\phi,\gamma)\colon\Xcal\hookrightarrow \Ycal\times \Ps_\Sc^{M-1|N}
$$
of $\Sc$-superschemes. Let $\Hcal=\SHilb(\Ycal\times \Ps_\Sc^{M-1|N}/\Sc)$ be the Hilbert superscheme of closed subs-uperschemes of $\Ycal\times \Ps_\Sc^{M-1|N}$, flat over $\Sc$ (which exists because $\Ycal$ is projective  \cite[Thm.\! 4.3]{BrHRPo20}), and let
$$
\xymatrix{
\Xcal_{univ}\ar@{^(->}[rr]\ar[rd]^f && (\Ycal\times \Ps_\Sc^{M-1|N})\times_\Sc \Hcal\ar[ld]
\\
& \Hcal &
}
$$
 be the universal closed sub-superscheme. The condition that the fibers of $f$ are Cohen-Macaulay supercurves is open, so that it defines
   an open sub superscheme $\Hcal_1$ of $\Hcal$. Let $f_1\colon \Xcal_1\to \Hcal_1$ be the restriction of $\Xcal_{univ}$ to $\Hcal_1$.  Then, if $\Xcal_{sm}$ is the smooth locus of $f_1$, the complementary $\Hcal_2$ of the diagonals of symmetric product $\Sym^{\nf_{NS}} (\Xcal_{sm}/\Hcal_1)$ parametrizes families of nodal curves with $\nf_{NS}$ Neveu-Schwarz punctures. Let $f_2\colon \Xcal_2\to \Hcal_2$ be the corresponding universal Cohen-Macaulay supercurve with $\nf_{NS}$ Neveu-Schwarz punctures. Notice that by \cite[Prop.\! 5.20]{BrHRPo20} for a proper morphism  there exists 
   the superscheme of relative positive divisors. The proof also works for morphisms that are a composition of an open immersion and a proper morphism; thus, for a proper morphism there exists the superscheme of positive relative divisors that lie in the smooth locus. Using this, we construct a superscheme $\Hcal_3\to \Hcal_2$ that parameterizes Cohen-Macaulay supercurves with $\nf_{NS}$ Neveu-Schwarz punctures and $\nf_{RR}$ Ramond-Ramond punctures, together with a universal Cohen-Macaulay supercurve $(f_3\colon \Xcal_3\to \Hcal_3,\{\Xcal_{3,i}\},\{\Zc_{3,_j}\})$.

By \cite[Prop.\! 3.18]{BrHRPo20}, there exists a superscheme $\Hcal_4\to \Hcal_3$ parametrizing morphisms from $\Omega_{f_3}$ to $\omega_{f_3}(2\mathbf{\Zc_3})$. Let $(f_4\colon \Xcal_4\to \Hcal_4,\{\Xcal_{4,i}\},\{\Zc_{4,_j}\})$ be the induced supercurve with punctures and $\bar\delta\colon \Omega_{f_4}\to\omega_{f_4}(2\Zc_4)$   the universal morphism of $\Oc_{\Xcal_4}$-modules. The locus of the points of $\Hcal_4$ where $\bar\delta$ is surjective and satisfies    condition  (3) of Definition \ref{def:prestablecurve} is an open subsuperscheme $\Hcal_5 $ of $\Hcal_4$, and we have a SUSY curve with punctures $(f_5\colon \Xcal_5\to \Hcal_5,\{\Xcal_{5,i}\},\{\Zc_{5,_j}\},\bar\delta)$.

The rest of the proof is   an adaptation of the final part of the proof of \cite[13.3.10]{Ol16}. 
We have a map $\phi_5\colon \Xcal_5 \to \Ycal$ and we can then consider the even line bundle  $\Lcl_F$ as in Definition \ref{def:stablemap2} for $F=(f_5\colon \Xcal_5\to \Hcal_5,\{\Xcal_{5,i}\},\{\Zc_{5,_j}\},\bar\delta,\phi_5)$. Let us denote by $\Nc$ the even line bundle on $\Xcal_5$ obtained as the pullback of the sheaf $\Oc_{\Ps_\Sc^{M-1|N}}(1)$ on $\Ps_\Sc^{M-1|N}$. Again by \cite[Prop.\! 3.18]{BrHRPo20}, there exists a superscheme $\Hcal_6\to\Hcal_5$ parameterizing isomorphisms of $\Lcl_F^{\otimes r}$ with $\Nc$. Then $\Hcal_6$ represents the functor that to a superscheme $\Sc$ associates  tripes consisting of a prestable SUSY curve with punctures  $(f\colon\Xcal\to\Sc, \{\Xcal_i\},\{\Zc_j\}, \bar\delta)$, an embedding $(\phi,\gamma)\colon\Xcal\hookrightarrow \Ycal\times \Ps_\Sc^{M-1|N}$ and an isomorphism $\tilde\sigma\colon \Lcl_F^{\otimes r} \iso \gamma^\ast\Oc_{\Ps_\Sc^{M-1|N}}(1)$. The condition that the composition map
$$
\Oc_\Sc^{M|N}\to f_\ast\gamma^\ast \Oc_{\Ps_\Sc^{M-1|N}}(1) \xrightarrow{\tilde\sigma} f_\ast(\Lcl_F^{\otimes r})
$$
is an isomorphism is also an open condition. We then have an open sub-superscheme $\Ucal_r\hookrightarrow \Hcal_6$ that represents the functor $\F$.

There is an action of the linear supergroup $\mathbb{G}L_{M|N}(\Oc_{\Sc})$ on $\Ucal_r$, induced by the action on $\F$, which  changes the choice of the isomorphism $\sigma$, and  there is a stack isomorphism
$$
\Uf_r\iso [\Ucal_r/\mathbb{G}L_{M|N}(\Oc_{\Sc})]\,.
$$
This finishes the proof of the algebraicity of $\Sf\Mf(\Ycal,\beta)$ by using \cite[Example 3.15]{BHRSS25}.

The diagonal  morphism is superschematic due to 
Lemma \ref{lem:represdiag}.
To prove that the diagonal of $\Sf\Mf(\Ycal,\beta)$ is separated we use  again Lemma \ref{lem:represdiag}, in this case its proof. There we have proved that for every stable supermap $F:=(f\colon\Xcal\to\Sc, \{\Xcal_i\},\{\Zc_j\}, \bar\delta, \phi)$    of genus $g$ over a superscheme $\Sc$, the fiber product $\Sf\Mf(\Ycal,\beta)\times_{\Delta,\Sf\Mf(\Ycal,\beta)\times\Sf\Mf(\Ycal,\beta),(F,F)}\Sc$ is representable by a closed superscheme $\spacecal{P}$ of a superscheme $\Vc$ of automorphisms of $\Xcal$. Now $\Vc$ is separated because it is an open sub-superscheme of a Hilbert superscheme and the latter is separated by \cite[Thm.\! 4.3]{BrHRPo20}. Then $\Pc$ is separated, and we have finished.
\end{proof}

\begin{rem} This also implies that the superstack of stable SUSY curves constructed in \cite{FKPpubl} has a superschematic and separated diagonal.
\end{rem}
 
\subsection{The superstack of stable supermaps is Deligne-Mumford} 
The aim of this subsection is to prove that the superstack of stable supermaps is Deligne-Mumford. The proof is based on the properties of the diagonal, namely we are going to prove that the algebraic space  of automorphisms of a geometric point of the algebraic superstack of stable supermaps is a finite and reduced algebraic supergroup, and then use  \cite[Prop.\! 3.68]{BHRSS25}. For that, we need to study the infinitesimal automorphisms of a stable supermap.

Let $k$ be an algebraically closed field.
We start by fixing some notation following \cite{FKPpubl}.
Let $(\Xcal, \{x_i\},\{\Zc_j\},\bar\delta)$ be a prestable SUSY curve over $k$ with Neveu-Schwarz  punctures $ \{x_i\}$ and Ramond-Ramond punctures $\{z_j\}$. Then one has $\Oc_\Xcal=\Oc_X\oplus\Pi\Lcl$ for a pure one-dimensional  $\Oc_X$-module  $\Lcl$ and
an isomorphism of $\Oc_X$-modules $\bar\delta \colon
\Lcl\iso \Homsh_{\Oc_X}(\Lcl, \omega_X(Z))$ ($\Zc=\sum_j \Zc_j$).  We can construct a smooth SUSY-curve with punctures, $(\widetilde\Xcal,  \{\tilde x_h\},\{\widetilde \Zc_j\},\bar{\delta'})$, the \emph{pointed normalization} of $\Xcal$, by iterating for every node of $X$ the construction given in \cite[Sec.\! 2.6]{FKPpubl}. Then:
\begin{itemize}
\item the bosonic reduction of $\widetilde\Xcal$ if the normalization $\widetilde X$ of $X$;
\item $\Oc_{\widetilde\Xcal}=\Oc_{\widetilde X}\oplus\Pi \widetilde\Lcl$, where $\widetilde\Lcl=\rho^\ast\Lcl/\{\text{torsion}\}$, $\rho\colon \widetilde X \to X$ being the desingularization morphism;
\item $\widetilde\Xcal$ inherits all the  Ramond-Ramond punctures $\{x_i\}$ of $\Xcal$, and for every  Ramond-Ramond node of $X$, that is, a node where $\Lcl$ is locally free, we add the superdivisors corresponding to the two points of the fibre of $\rho$ (\cite[Definition 2.13]{FKPpubl}) over it as  Ramond-Ramond punctures,
\item $\widetilde\Xcal$ inherits all the  Neveu-Schwarz  punctures $\{\Zc_j\}$ of $\Xcal$, and for every  Neveu-Schwarz  node of $X$, that is, a node where $\Lcl$ is not locally free, we add the two points of the fibre of $\rho$ over it as  Neveu-Schwarz punctures.
\end{itemize}
Then $\rho$ induces a morphism of SUSY curves with punctures, which we still denote by 
$$
\rho\colon (\widetilde\Xcal,  \{\tilde x_h\},\{\widetilde \Zc_\ell\},\bar{\delta'})\to (\Xcal, \{x_i\},\{\Zc_j\},\bar\delta)\,.
$$
One can see that \emph{if $(\Xcal, \{x_i\},\{\Zc_j\},\bar\delta)$ is stable, so is its pointed normalization $(\widetilde\Xcal,  \{\tilde x_h\},\{\widetilde \Zc_\ell\},\bar{\delta'})$}.

We can extend this construction to stable supermaps. For that, consider a projective superscheme $\Ycal$  and a rational equivalence class of supercycles $\beta\in A_1(\Ycal$). 
If $F=(\Xcal, \{x_i\},\{\Zc_j\},\bar\delta, \phi)$ is a stable supermap to $\Ycal$ of class $\beta$, so that $\phi\colon \Xcal \to \Ycal$ is a morphism with $\phi_\ast[\Xcal]=\beta$, the morphism $\widetilde\phi=\phi\circ\rho \colon\widetilde\Xcal\to \Ycal$ gives rise to a family $\widetilde F=(\widetilde\Xcal, \{\tilde x_h\},\{\widetilde \Zc_\ell\},\bar\delta', \widetilde\phi)$. From the definition of the punctures of $\widetilde\Xcal$ one sees that $\widetilde F$ \emph{is a stable supermap  to $\Ycal$ of class $\beta$ from the pointed normalization}.


Following \cite{FKPpubl}, we denote by $\Ac_{(\Xcal, \{x_i\},\{\Zc_j\},\bar\delta)}$ the sheaf of infinitesimal automorphisms of a prestable SUSY curve with punctures, and by   $\Ac_{(\Xcal, \{x_i\},\{\Zc_j\},\bar\delta,\phi)}$ the sheaf of infinitesimal automorphisms of a superstable map to $\Ycal$. There is a natural injective morphism
\begin{equation}\label{eq:infautomor0}
\varpi\colon\Ac_{(\Xcal,\{x_i\},\{\Zc_j\},\bar\delta,\phi)}\hookrightarrow \Ac_{(\Xcal,\{x_i\},\{\Zc_j\},\bar\delta)}\,.
\end{equation}
We have also morphisms 
\begin{equation}\label{eq:infautomor}
\Phi\colon\Ac_{(\Xcal, \{x_i\},\{\Zc_j\},\bar\delta)} \to \rho_\ast\Ac_{(\widetilde\Xcal, \{\tilde x_h\},\{\widetilde\Zc_\ell\},\bar\delta')}\,,\quad
\Psi\colon\Ac_{(\Xcal, \{x_i\},\{\Zc_j\},\bar\delta,\phi)} \to \rho_\ast\Ac_{(\widetilde\Xcal, \{\tilde x_h\},\{\widetilde\Zc_\ell\},\bar\delta',\phi)}\,.
\end{equation}

\begin{lemma}\label{lem:infautomor} The morphism $\Psi$ in  Equation \eqref{eq:infautomor} is injective.
\end{lemma}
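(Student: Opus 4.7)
The plan is to realize $\Psi$ as the left edge of a commutative square whose right edge $\Phi$ and top edge $\varpi$ are both injective, and to deduce the injectivity of $\Psi$ formally. First I would introduce the analogue on the pointed normalization of the injection $\varpi$ of Equation~\eqref{eq:infautomor0}, namely the forgetful inclusion
$$\tilde\varpi\colon\Ac_{(\widetilde\Xcal,\{\tilde x_h\},\{\widetilde\Zc_\ell\},\bar\delta',\phi)}\hookrightarrow \Ac_{(\widetilde\Xcal,\{\tilde x_h\},\{\widetilde\Zc_\ell\},\bar\delta')};$$
applying the left-exact functor $\rho_\ast$ keeps $\tilde\varpi$ a monomorphism.

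Next I would spell out that the square
$$
\xymatrix{
\Ac_{(\Xcal,\{x_i\},\{\Zc_j\},\bar\delta,\phi)} \ar[r]^{\varpi} \ar[d]_{\Psi} & \Ac_{(\Xcal,\{x_i\},\{\Zc_j\},\bar\delta)} \ar[d]^{\Phi} \\
\rho_\ast\Ac_{(\widetilde\Xcal,\{\tilde x_h\},\{\widetilde\Zc_\ell\},\bar\delta',\phi)} \ar[r]^{\rho_\ast\tilde\varpi} & \rho_\ast\Ac_{(\widetilde\Xcal,\{\tilde x_h\},\{\widetilde\Zc_\ell\},\bar\delta')}
}
$$
commutes: both composites send an infinitesimal automorphism of the stable supermap to its pullback along $\rho$, simply forgetting the compatibility with $\phi$ afterwards (or before). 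This commutativity is a formal consequence of the functoriality of the constructions of $\Phi$ and $\Psi$ recalled just above Equation~\eqref{eq:infautomor}.

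With the square in place, the argument reduces to two injectivities already at our disposal. The morphism $\varpi$ is the injection of Equation~\eqref{eq:infautomor0}. The morphism $\Phi$, its counterpart for the underlying prestable SUSY curves, is injective by the corresponding statement in \cite{FKPpubl}; the underlying idea is that an infinitesimal automorphism of a prestable SUSY curve must fix the nodes and respect the superconformal structure, so it is uniquely determined by its pullback to the normalization. Therefore $\Phi\circ\varpi = (\rho_\ast\tilde\varpi)\circ\Psi$ is injective, and since $\rho_\ast\tilde\varpi$ is itself injective, we conclude that $\Psi$ is injective.

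The main obstacle I anticipate is not the diagrammatic deduction but the verification that the constructions of $\Phi$ and $\Psi$ of \cite{FKPpubl} are genuinely natural with respect to the forgetful morphisms $\varpi$ and $\tilde\varpi$, i.e.~that pullback along $\rho$ strictly commutes with forgetting the compatibility with $\phi$; once this naturality is granted, left-exactness of $\rho_\ast$ together with a two-out-of-three argument for monomorphisms finishes the proof.
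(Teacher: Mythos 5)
Your proof is correct and follows essentially the same route as the paper: the paper's proof exhibits exactly this commutative square with injective rows, cites \cite[Prop.\ 3.11]{FKPpubl} for the injectivity of $\Phi$ (noting that part (i) there extends to prestable curves), and concludes the injectivity of $\Psi$ by the same diagram chase. One minor remark: the injectivity of $\rho_\ast\tilde\varpi$ is not actually needed at the end, since the injectivity of the composite $(\rho_\ast\tilde\varpi)\circ\Psi=\Phi\circ\varpi$ already forces $\Psi$ to be injective.
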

\begin{proof} We have a diagram with exact rows
$$\xymatrix{  0\ar[r] &\Ac_{(\Xcal, \{x_i\},\{\Zc_j\},\bar\delta,\phi)} \ar@^{(->}[r]^\varpi \ar[d]^\Psi& \Ac_{(\Xcal, \{x_i\},\{\Zc_j\},\bar\delta)}\ar[d]^\Phi
\\
0\ar[r] &\rho_\ast\Ac_{(\widetilde\Xcal, \{\tilde x_h\}, \{\widetilde\Zc_\ell\},\bar\delta',\phi)}\ar@^{(->}[r]^{\rho_\ast\varpi} & \rho_\ast\Ac_{(\widetilde\Xcal, \{\tilde x_h\},\{\widetilde\Zc_\ell\},\bar\delta')}
}
$$
By \cite[Prop.\! 3.11]{FKPpubl} (whose part (i) also holds for prestable curves), $\Phi$ is injective. Then, $\Psi$ is injective as well.
\end{proof}

%

Before establishing our main result on the infinitesimal automorphism of a stable supermap, we study some preliminary results on its bosonic reduction in the particular case we shall need it.

First notice that  if $F=(\Xcal, \{x_i\},\{\Zc_j\},\bar\delta, \phi)$ is a stable supermap to $\Ycal$ of class $\beta$, the class $\beta$ is of the form  $\beta=(1-\Pi)\beta_0$ for a cycle $\beta_0$ on the bosonic reduction $Y$ of $\Ycal$; moreover   the condition $\phi_\ast[\Xcal]=\beta$  is equivalent to $(\phi_{bos})_\ast[X_s]=\beta_0$.

Consider now the case when $\Ycal=\Ps^{r|s}$ is a projective superspace. Then $Y=\Ps^r$ and $\beta_0=d H^{r-1}$ where $H$ is the hyperplane class. Assume also that $X$ is a smooth and irreducible curve that is not contracted by $\phi_{bos}$. Then, the schematic image $C:=\Im\phi$ is an irreducible curve and $\phi_{bos}\colon X \to C$ is a finite morphism. If $m$ is its degree and $q$ is the degree of the curve $C$ in $\Ps^r$, we have $d=mq$.

A standard argument of projective geometry yields:
\begin{lemma}\label{lem:addingpoints} In the above situation,  there is an open subset $\U$ of the dual projective space $(\Ps^r)^\ast$ whose closed points correspond to hyperplanes $H\subset \Ps^r$ fulfilling these conditions:
\begin{enumerate}
\item $H$ does not contain any of the following points:
\begin{enumerate}
\item any of the images $\phi_{bos}(x_i)$, $\phi_{bos}(z_j)$ of the special points $\{x_i\}$, $\{z_j\}$;
\item  the singular points of $C$;
\item the ramification values of $\phi$ over the smooth locus of $ C$.
\end{enumerate}
\item $H\cap \Im\Phi$ consists of $q$ different points $\{y_1,\dots,y_q\}$.
\end{enumerate}
Note that $\U$ is nonempty as $k$ is infinite. Moreover, proceeding iteratively, one can find hyperplanes $H_1$, $H_2$ and $H_3$ in $\Ps^r$ fulfilling 1 and 2    that   not meet pairwise along $C$.
\end{lemma}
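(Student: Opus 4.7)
The plan is to realize $\U$ as a finite intersection of non-empty Zariski-open subsets of $(\Ps^r)^\ast$, each cut out either by excluding a finite set of points of $\Ps^r$ or by a Bertini-type transversality condition. Since $(\Ps^r)^\ast$ is irreducible over the infinite field $k$, a finite intersection of non-empty open subsets is automatically non-empty, which yields the nonemptiness of $\U$.

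For conditions (1a)--(1c) I would bundle everything into a single requirement that $H$ avoid a finite set $\Sigma\subset\Ps^r$. The images $\phi_{bos}(x_i)$ and $\phi_{bos}(z_j)$ are finite in number; the singular locus of the irreducible projective curve $C$ is finite; and the ramification locus of the finite morphism $\phi_{bos}\colon X\to C$ above the smooth locus of $C$ is also finite, since on the open subset $\phi_{bos}^{-1}(C_{sm})$ of $X$ one has a finite morphism between smooth curves, whose ramification divisor has finite support, so its image in $C_{sm}$ is a finite set of ramification values. For each $p\in\Sigma$ the set $\{H : p\in H\}$ is a single hyperplane in $(\Ps^r)^\ast$, hence the complement of their union is a non-empty open $\U'\subset(\Ps^r)^\ast$.

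For condition (2) I would invoke the classical Bertini theorem for projective curves: a generic hyperplane meets $C$ transversally in exactly $\deg C=q$ distinct smooth points. More precisely, the locus of hyperplanes failing this condition --- those passing through a singular point of $C$ or tangent to $C$ at a smooth point --- forms a proper closed subvariety of $(\Ps^r)^\ast$ whose complement $\U''$ is a non-empty open. Setting $\U:=\U'\cap\U''$ produces the desired non-empty open subset, and by B\'ezout's theorem any $H\in\U$ intersects $C$ in exactly $q$ distinct points.

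The moreover statement is a straightforward induction. Having picked $H_1\in\U$, the $q$ points of $H_1\cap C$ are a new finite set that I would adjoin to $\Sigma$ and rerun the construction, obtaining a non-empty open $\U_2\subset\U$ whose hyperplanes avoid $H_1\cap C$; choose $H_2\in\U_2$. One further iteration, adjoining also the points of $H_2\cap C$, produces $H_3$. By construction $H_i\cap H_j\cap C=\emptyset$ for $i\ne j$, which is exactly the pairwise disjointness along $C$ required. No step presents a real obstacle; the only point requiring a moment's care is the finiteness of the ramification locus in (1c), and the rest is standard projective geometry.
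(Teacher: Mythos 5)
The paper offers no proof of this lemma---it is stated as the outcome of ``a standard argument of projective geometry''---and your proposal is a correct write-up of exactly that standard argument: finite-set avoidance in $(\Ps^r)^\ast$ handles (1a)--(1c), a Bertini/dual-variety argument handles (2) (the tangent hyperplanes sweep out the image of the conormal variety, of dimension at most $r-1$, and transversality at smooth points plus B\'ezout gives the $q$ distinct points), and iterated avoidance of the previously obtained hyperplane sections gives the ``moreover'' clause. The only tacit hypothesis, which you share with the paper's own statement, is that $\phi_{bos}\colon X\to C$ is generically \'etale (automatic in characteristic zero), so that the ramification values over the smooth locus of $C$ in (1c) do form a finite set; you rightly flag this finiteness as the one point needing care.
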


\begin{prop}\label{prop:infautomor} If $F=(\Xcal, \{x_i\},\{\Zc_j\},\bar\delta,\phi)$ is a stable supermap to a projective superscheme $\Ycal$, the space of  its  global infinitesimal automorphisms is zero, $H^0(X,\Ac_{(\Xcal, \{x_i\},\{\Zc_j\},\bar\delta,\phi)} )=0$
\end{prop}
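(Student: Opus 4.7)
The plan is to reduce to the pointed normalization via Lemma \ref{lem:infautomor} and then, working component by component on the smooth normalization, either invoke the stability of the restricted SUSY curve directly (if the component is contracted) or enlarge the puncture set with preimages of generic hyperplanes to force stability (if the component is not contracted). In both cases one reduces to the known vanishing of infinitesimal automorphisms for smooth stable SUSY curves from \cite{FKPpubl}.

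First I would apply Lemma \ref{lem:infautomor}: since $\Psi$ is injective, it is enough to show that $H^0(\widetilde X, \Ac_{\widetilde F}) = 0$ for the pointed normalization $\widetilde F = (\widetilde\Xcal, \{\tilde x_h\}, \{\widetilde\Zc_\ell\}, \bar\delta', \widetilde\phi)$. Because $\widetilde X$ is a disjoint union of smooth irreducible components $\widetilde X^{(k)}$, it suffices to verify the vanishing on each such component. For a component $\widetilde X^{(k)}$ contracted by $\widetilde\phi$, the pullback $\widetilde\phi^\ast\Lcl_\Ycal$ is trivial there, so the ampleness of $\Lcl_F$ entailed by the stability of $F$ forces $\omega_{\widetilde X^{(k)}}^2$ twisted by the full puncture divisor (the original $\{x_i\},\{z_j\}$ lying on $\widetilde X^{(k)}$ plus the extra NS/RR punctures produced by the normalization at the nodes) to be ample. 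Hence $\widetilde X^{(k)}$ with this puncture data is itself a smooth stable SUSY curve, and the injection $\varpi$ of \eqref{eq:infautomor0} together with the vanishing of global infinitesimal automorphisms of smooth stable SUSY curves (\cite[Prop.\! 3.11]{FKPpubl}) yields the result on this component.

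If instead $\widetilde\phi|_{\widetilde X^{(k)}}$ is not constant, I would choose a closed immersion $\Ycal \hookrightarrow \Ps^{r|s}$ and apply Lemma \ref{lem:addingpoints} to the bosonic reduction of the composite map $\widetilde X^{(k)} \to \Ycal \hookrightarrow \Ps^{r|s}$. This produces three generic hyperplanes $H_1, H_2, H_3 \subset \Ps^r$ each meeting $C := \Im\phi_{\bos}$ in $q$ reduced points, away from the special loci listed in the lemma; the preimages $(\widetilde\phi|_{\widetilde X^{(k)}})^{-1}(H_\ell)$ then consist of $3q$ distinct smooth points of $\widetilde X^{(k)}$ disjoint from all existing punctures. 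Adjoining these as additional NS punctures on $\widetilde X^{(k)}$ gives a smooth supercurve with so many marked points that a standard degree count makes $\omega_{\widetilde X^{(k)}}^2$ twisted by the enlarged puncture divisor ample, so the enlarged object is a smooth stable SUSY curve. Any global infinitesimal automorphism of $\widetilde F$ commutes with $\widetilde\phi^\#$, hence preserves the ideals of the preimage divisors, and therefore defines a global infinitesimal automorphism of the enlarged stable SUSY curve; by \cite[Prop.\! 3.11]{FKPpubl} this must vanish, giving the desired conclusion.

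The main obstacle is to make the last step rigorous in the super setting: an element of $H^0(\widetilde X^{(k)}, \Ac_{\widetilde F})$ is a super-derivation of $\Oc_{\widetilde X^{(k)}}$ compatible with $\widetilde\phi^\#$, and one must check that it scheme-theoretically preserves the ideals of the preimage divisors of the $H_\ell$, not merely their underlying reduced point sets. This follows because Lemma \ref{lem:addingpoints} places the hyperplanes transversally to $C$ at smooth non-ramification points, so the ideal of the preimage divisor on $\widetilde X^{(k)}$ is generated by the pullback under $\widetilde\phi^\#$ of the linear forms cutting out $H_\ell$ in $\Ps^r$, and any derivation compatible with $\widetilde\phi^\#$ automatically annihilates such generators. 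Together with a careful verification that the punctures produced in the non-contracted case always suffice for stability, including the low-genus cases with few original punctures, this closes the argument.
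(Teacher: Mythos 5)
Your proposal is correct and follows essentially the same route as the paper's proof: reduction to the smooth case via Lemma \ref{lem:infautomor}, the contracted/non-contracted dichotomy, adjoining the preimages of three hyperplanes from Lemma \ref{lem:addingpoints} as new Neveu-Schwarz punctures, and concluding by \cite[Prop.\! 3.11]{FKPpubl} --- the paper just makes the stability of the enlarged curve immediate by rewriting $\Lcl_F\simeq \omega_\Xcal^2\left(\sum (\Wc_i+\Wc'_{\alpha h})+\sum\Zc_j\right)$ instead of a degree count, and packages your derivation-level compatibility argument as automorphisms over $\Sc=\SSpec k[\epsilon,\theta]$ preserving the fibres of $\phi_\Sc$. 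One harmless slip: each hyperplane pulls back to $d=mq$ points (not $q$), so you adjoin $3d$ punctures rather than $3q$.
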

\begin{proof} By Lemma \ref{lem:infautomor}, we can assume that $\Xcal$ is smooth. Moreover, we can   assume that it is connected. One has
\begin{equation}\label{eq:stablereduction}
0\to H^0(X,\Ac_{(\Xcal, \{x_i\},\{\Zc_j\},\bar\delta,\phi)})\xrightarrow{H^0(\varpi)} H^0(X,\Ac_{(\Xcal, \{x_i\},\{\Zc_j\},\bar\delta)})\,.
\end{equation}
If $X$ is contracted by $\phi\colon \Xcal \to \Ycal$, then $\varpi\colon\Ac_{(\Xcal, \{x_i\},\{\Zc_j\},\bar\delta,\phi)}\hookrightarrow \Ac_{(\Xcal, \{x_i\},\{\Zc_j\},\bar\delta)}$ is an isomorphism and 
the punctured SUSY curve $(\Xcal, \{x_i\},\{\Zc_j\},\bar\delta)$ is stable. Thus, 
$$
H^0(X,\Ac_{(\Xcal, \{x_i\},\{\Zc_j\},\bar\delta,\phi)})=H^0(X,\Ac_{(\Xcal, \{x_i\},\{\Zc_j\},\bar\delta)} )=0
$$ by \cite[Prop.\! 3.11]{FKPpubl}.

We are left with the case where $\Xcal$ is not contracted. We can assume that $\Ycal=\Ps^{r|s}$. 

Take the hyperplanes $H_i$ in $\Ps^r$ given by  Lemma \ref{lem:addingpoints}, and let $\Hc_i$ be the superdivisors of $\Ps^{r|s}$ obtained by pull-back under the projection $\Ps^{r|s}\to \Ps^r$. 

The superdivisor $\sum_{\alpha=1,2,3}\phi^\ast\Hc_\alpha$ of $\Xcal$ is the sum of $3d=3m q$ superdivisors $\Wc'_{\alpha h}$  supported on $3d$ pairwise disjoint points of $X$. If  $x'_{\alpha h}$ are the sections of $\Xcal\to\Spec k$ corresponding to them by \cite[Def.\! 2.13]{FKPpubl}, we can add $\{x'_{\alpha h}\}$ as Neveu-Schwarz punctures,   getting a new prestable punctured SUSY curve $(\Xcal,\{x_i, x'_{\alpha h}\},\{\Zc_{j}\},\bar\delta)$. 
We now have:
\begin{enumerate}
\item
since the supermap $F$ is stable, the line bundle $\Lcl_F=\omega_\Xcal^2(\sum \Wc_i+\sum\Zc_j)\otimes \phi^\ast\Oc_{\Ps^{r|s}}(1)^{\otimes 3}$ is ample (Definition \ref{def:stablemap2}). We can write $\Lcl_F$ as
$$
\Lcl_F= \omega_\Xcal^2(\sum \Wc_i+\sum\Zc_j)\otimes \Oc_\Xcal(\phi^{-1}(\Hc_1+\Hc_2+\Hc_3))\simeq \omega_\Xcal^2\left(\sum (\Wc_i+\Wc'_{\alpha h})+\sum\Zc_j\right)
$$
so that the punctured SUSY curve $(\Xcal,\{x_i, x'_{\alpha h}\},\{\Zc_j\},\bar\delta)$ is stable (Definition \ref {def:stablecurve}). Thus, 
\begin{equation}\label{eq:stablereduction2} 
H^0(X,\Ac_{(\Xcal,\{x_i, x'_{\alpha h}\},\{\Zc_{j}\},\bar\delta)})=0
\end{equation}
by \cite[Prop.\! 3.11]{FKPpubl}.
\item 
Let us now consider $\Sc=\SSpec k[\epsilon,\theta]$, the superscheme of the   dual supernumbers, that is, $\epsilon$ is even, $\theta$ is odd, and $\epsilon^2=\theta^2=\epsilon\theta=0$. We have a base-changed prestable  SUSY curve $(\Xcal_\Sc\to\Sc,\{x_{i\Sc}\},\{\Zc_{j\Sc}\},\bar\delta_\Sc)$ and a stable supermap $F_\Sc=(\Xcal_\Sc\to\Sc,\{x_{i\Sc}\},\{\Zc_{j\Sc}\},\bar\delta_\Sc,\phi_\Sc)$, whose restrictions to the fibre of the closed point of $\Sc$ are the original ones. 

Take as above the hyperplanes $H_\alpha$ in $\Ps^r$ given by  Lemma \ref{lem:addingpoints}, and let $\Hc_{\alpha\Sc}$ be the superdivisors of $\Ps_\Sc^{r|s}$ obtained by pull-back under the projection $\Ps_\Sc^{r|s}\to \Ps_\Sc^r\to \Ps^r$. Proceeding as before, we get a stable SUSY curve $(\Xcal_\Sc\to\Sc,\{x_{i\Sc},x'_{\alpha h\Sc}\},\{\Zc_{j\Sc}\},\bar\delta_\Sc)$ over $\Sc$. 
Since the automorphisms of the supermap $F_\Sc$ over $\Sc$ preserve the fibres of $\phi_\Sc$, they also preserve the new Neveu-Schwarz punctures and yield automorphisms of the SUSY curve $(\Xcal_\Sc\to\Sc,\{x_{i\Sc},x'_{\alpha h\Sc}\},\{\Zc_{j\Sc}\},\bar\delta_\Sc)$ over $\Sc$ . In particular, we have an immersion
$$
\Ac_F
\hookrightarrow 
\Ac_{(\Xcal,\{x_i, x'_{\alpha h}\},\{\Zc_{j}\},\bar\delta)} 
$$
and one finishes by Equation \eqref{eq:stablereduction2}.
\end{enumerate}
\end{proof}

\begin{thm}\label{thm:DMstack}  
The superstack
$\Sf\Mf_{g,\nf_{NS},\nf_{RR}}(\Ycal,\beta) \xrightarrow{p} \Sf$ of the stable supermaps is a Deligne-Mumford superstack.
\end{thm}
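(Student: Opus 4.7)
The plan is to apply the Deligne-Mumford criterion \cite[Prop.\! 3.68]{BHRSS25}: since Theorem \ref{thm:algstack} already gives algebraicity together with a superschematic and separated diagonal, it suffices to verify that at every geometric point $F$ the automorphism supergroup $\operatorname{Aut}(F)$ is a finite and reduced algebraic supergroup. Essentially all the ingredients needed are already in place from the earlier results in this section.

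First I would extract from the proof of Lemma \ref{lem:represdiag} that, for $F=(\Xcal,\{x_i\},\{\Zc_j\},\bar\delta,\phi)$ a stable supermap over $\Spec k$, the functor $\operatorname{Aut}(F)$ is representable by a closed sub-superscheme of the superscheme $\Vc$ of automorphisms of the underlying prestable SUSY curve with punctures. From the construction recalled in the proof of Theorem \ref{thm:algstack}, $\Vc$ is an open sub-superscheme of an Isom-superscheme built from a Hilbert superscheme with fixed numerical invariants, hence of finite type over $k$; closedness in $\Vc$ together with separatedness of the diagonal then imply that $\operatorname{Aut}(F)$ is a separated group superscheme of finite type over $k$.

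The crucial step is to show that $\operatorname{Aut}(F)$ is étale. I would identify the tangent space at the identity canonically with the space of global infinitesimal automorphisms $H^0(X,\Ac_{(\Xcal,\{x_i\},\{\Zc_j\},\bar\delta,\phi)})$: this is precisely the identification exploited in step (2) of the proof of Proposition \ref{prop:infautomor}, where automorphisms of the pullback $F_\Sc$ over $\Sc=\SSpec k[\epsilon,\theta]$ that reduce to the identity on the closed fibre are interpreted as global sections of $\Ac_F$. By Proposition \ref{prop:infautomor} this space vanishes, so $\operatorname{Aut}(F)$ is unramified at the identity; translating by the group law, it is unramified everywhere, and hence étale over $k$. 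A separated étale group superscheme of finite type over an algebraically closed field is a finite disjoint union of copies of $\Spec k$, so $\operatorname{Aut}(F)$ is finite and reduced, and \cite[Prop.\! 3.68]{BHRSS25} then yields the Deligne-Mumford property.

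The main subtlety will be the tangent space identification in the super setting. The base $\SSpec k[\epsilon,\theta]$ carries both an even and an odd infinitesimal direction, so one must verify that deformations over it capture the full $\Z_2$-graded module $H^0(X,\Ac_F)$, including its odd part, so that the vanishing statement of Proposition \ref{prop:infautomor} translates into unramifiedness of the full supergroup structure and not merely of the bosonic reduction. Once this is carefully written out, the remaining steps are formal and the argument proceeds as sketched.
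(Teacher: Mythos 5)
Your proposal is correct and follows essentially the same route as the paper's proof: algebraicity with superschematic and separated diagonal from Theorem \ref{thm:algstack}, representability of the automorphism functor at a geometric point, the identification of the tangent space at the identity with the space of global infinitesimal automorphisms, its vanishing via Proposition \ref{prop:infautomor}, and the Deligne--Mumford criterion of \cite[Prop.\! 3.68]{BHRSS25}. The extra details you supply (closedness inside the superscheme $\Vc$, \'etaleness via translation by the group law, and the finiteness argument) merely make explicit steps that the paper leaves implicit.
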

\begin{proof} Let us write for simplicity $\Sf\Mf$ instead of $\Sf\Mf_{g,\nf_{NS},\nf_{RR}}(\Ycal,\beta)$. By Theorem \ref{thm:algstack}  it is an algebraic superstack with superschematic diagonal. Thus, for every geometric point of $\Sf\Mf$, which is given by a stable supermap $F=(\Xcal,\Wc,\Zc,\bar\delta,\phi)$ over an algebraically closed field $k$, the algebraic superspace $\mathfrak{A}ut_{\Sf\Mf(k)}(F)$ of the automorphisms of $F$ is a superscheme.  The tangent space to  $\mathfrak{A}ut_{\Sf\Mf(k)}(F)$ at the identity  is the space of   global infinitesimal automorphisms of $F$, and this is zero by Proposition \ref{prop:infautomor}. Then $\mathfrak{A}ut_{\Sf\Mf(k)}(F)$ is a discrete and reduced group scheme so that $\Sf\Mf$ is a Deligne-Mumford superstack by  \cite[Prop.\! 3.68]{BHRSS25}.
\end{proof}

\section{Bosonic reduction} \label{red}

Since the properness of a superscheme or superstack is equivalent to the properness of the underlying bosonic reduction \cite[Prop.\! 3.45]{BHRSS25},
it is important to compute the latter for the superstack of stable supermaps. In the case of stable SUSY curves, the bosonic reduction is the stack of stable curves with a spin structure. This stack has a finite morphism onto the stack of stable curves, and therefore it is proper. 

One might think that the bosonic reduction of the superstack $ \Sf\Mf(\Ycal,\beta)$ of stable supermaps into a superscheme $\Ycal$ is the stack  $\Mf^{spin}_{g}(Y,\beta_0)$ of maps from curves with a spin structure to the bosonic reduction $Y$ of $\Ycal$, but we shall see that this is not always the case (for simplicity  in this preliminary discussion we omit references to the punctures). The bosonic reduction of $ \Sf\Mf(\Ycal,\beta)$ has a morphism to $\Mf^{spin}_{g}(Y,\beta_0)$ whose fibers are linear schemes, so that $ \Sf\Mf(\Ycal,\beta)$ is not proper unless these linear schemes reduce to a point. On the other hand, if the target $\Ycal$ is an ordinary scheme $Y$, the bosonic reduction of $ \Sf\Mf(Y,\beta_0)$
is exactly $\Mf^{spin}_{g}(Y,\beta_0)$, and $ \Sf\Mf(Y,\beta_0)$ is proper.

In the next subsection we study the moduli stack of stable spin maps; then in Subsection \ref{bosonic} we shall compute the bosonic reduction of $ \Sf\Mf(\Ycal,\beta)$.

\subsection{The stack of spin maps}

We start by studying the stack
of stable spin maps, that is, stable maps from curves with punctures equipped with a spin structure. So, in this subsection we consider only schemes and no superschemes. Note that in spite of a spin curve being a completely bosonic object (no ``super'' structure) it makes sense to talk of Ramond-Ramond punctures. The stack of stable spin maps was already studied in \cite{JaKiVa05,MoZh24}.

The precise definitions are the following:
\begin{defin}\label{def:spin} A prestable spin curve of genus $g$ with $\nf_{NS}$ Neveu-Schwarz punctures and $\nf_{RR}$ Ramond-Ramond punctures over a scheme $S$ is defined by the following data:
\begin{enumerate}
\item a proper Cohen-Macaulay scheme morphism $f\colon X \to S$ whose fibers are connected prestable (i.e. nodal) curves with arithmetic genus $g$. Then, $f\colon X \to S$ is automatically a Gorenstein morphism; 
\item a family of $\nf_{NS}$ sections $\sigma_i\colon S \to X$ of $f$ whose images $W_i$ are contained in the smooth locus of $f$;
\item a family $\{Z_j\}$ of $\nf_{RR}$ divisors   of relative degree 1 over $S$ contained in the smooth locus of $f$;
\item a coherent sheaf $\Lcl$ on $X$, flat and relatively Cohen-Macaulay over $S$,  generically of rank one;
\item an isomorphism $\varpi\colon\Lcl\iso\Homsh_{\Oc_X}(\Lcl,\omega_{X/S}(Z))$, where $Z=\sum_j Z_j$.
\end{enumerate}
\end{defin}

Let $Y$ be a projective scheme, and   fix an ample line bundle $\Lcl_Y$ and a 1-cycle $\beta_0$ on $Y$.
 
\begin{defin}\label{def:spinmap} 
A stable spin map $F$ into $Y$, of genus $g$ and  class $\beta_0$, with $\nf_{NS}$ Neveu-Schwarz punctures and $\nf_{RR}$  Ramond-Ramond punctures over a scheme $S$ is given by the following data:
\begin{enumerate}
\item A prestable spin curve of genus $g$ with $\nf_{NS}$ Neveu-Schwarz punctures and $\nf_{RR}$  Ramond-Ramond punctures $(f\colon X\to S,\{W_i\}, \{Z_j\},  \Lcl,\varpi)$ over $S$ (Definition \ref{def:spin}).
\item A scheme morphism $\psi\colon X\to Y$ such that $\psi_\ast[X_s]=\beta_0$ for every closed point $s\in S$.
\end{enumerate}
Moreover, we assume that the line bundle
$$
\Lcl_F:=\omega_{X/S}(W+Z)\otimes \psi^\ast \Lcl_Y^{\otimes 3}\,,
$$ 
where $W=\sum_i W_i$, is strongly relatively ample with respect to $f\colon X\to S$.
\end{defin}

Proceeding in the usual way we can define the category fibered in groupois over the \'etale site $Sch_{et}$ of schemes defined by the stable spin maps  $F=(f\colon X\to S,\{W_i\}, \{Z_j\},  \Lcl,\varpi, \psi)$ into $Y$, of  genus $g$ and  class $\beta_0$, with $\nf_{NS}$ Neveu-Schwarz punctures and $\nf_{RR}$  Ramond-Ramond punctures. We denote it by $\Mf^{spin}_{g,\nf_{NS},\nf_{RR}}(Y,\beta_0)$. 
 One has \cite[Thm.\! 2.3.2]{JaKiVa05}:
\begin{prop}\label{prop:spinstack}
$\Mf^{spin}_{g,\nf_{NS},\nf_{RR}}(Y,\beta_0)$ is a proper Deligne-Mumford stack.
\qed
\end{prop}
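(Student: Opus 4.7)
The plan is to realize $\Mf^{spin}_{g,\nf_{NS},\nf_{RR}}(Y,\beta_0)$ as a representable, finite cover of Kontsevich's moduli stack $\overline{\Mf}_{g,\nf_{NS}+\nf_{RR}}(Y,\beta_0)$ of $(\nf_{NS}+\nf_{RR})$-pointed stable maps to $Y$ of class $\beta_0$, which is classically a proper Deligne-Mumford stack. Properness, algebraicity, and the DM property will then descend along this cover.

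First I would define a forgetful $1$-morphism $\pi$ sending $(f\colon X\to S,\{W_i\},\{Z_j\},\Lcl,\varpi,\psi)$ to $(f,\psi)$ together with the sections cut out by the $\{W_i\}$ and by the $\{Z_j\}$; each $Z_j$ is indeed a section, because it has relative degree $1$ inside the smooth locus of $f$. The relative ampleness condition in Definition \ref{def:spinmap} for $\omega_{X/S}(W+Z)\otimes \psi^\ast \Lcl_Y^{\otimes 3}$ is precisely the Kontsevich stability condition for a map with $\nf_{NS}+\nf_{RR}$ marked points, so $\pi$ is well-defined.

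Next I would show that $\pi$ is representable and finite. Over a geometric point $(f_s\colon X_s\to \Spec k,\{W_i(s)\},\{Z_j(s)\},\psi_s)$ the fiber of $\pi$ parametrizes pairs $(\Lcl,\varpi)$ where $\Lcl$ is a flat, relatively Cohen-Macaulay, generically rank-one sheaf, together with an isomorphism $\Lcl\iso \Homsh_{\Oc_{X_s}}(\Lcl,\omega_{X_s}(Z_s))$. When $X_s$ is smooth, $\Lcl$ is a line bundle and $\varpi$ makes it a square root of $\omega_{X_s}(Z_s)$; such square roots form a torsor under $\Pic(X_s)[2]\cong (\Z/2\Z)^{2g}$, a finite reduced group. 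When $X_s$ is nodal, non-locally-free sheaves are allowed precisely at Neveu-Schwarz nodes; by the theory of generalized theta characteristics developed by Cornalba and Jarvis, the set of admissible pairs remains finite and unramified, and assembles in families into a finite scheme over $S$. Consequently $\pi$ is representable by finite schemes, which transfers the proper DM property from $\overline{\Mf}_{g,\nf_{NS}+\nf_{RR}}(Y,\beta_0)$ to $\Mf^{spin}_{g,\nf_{NS},\nf_{RR}}(Y,\beta_0)$.

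The main obstacle is the construction of the moduli scheme of generalized theta characteristics on families of nodal pointed curves, and in particular the verification of its finiteness at the boundary where $\Lcl$ is allowed to fail to be locally free at Neveu-Schwarz nodes. This is exactly the content of \cite[Thm.\! 2.3.2]{JaKiVa05}, which I would invoke for this step rather than re-prove from scratch.
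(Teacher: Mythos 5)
Your overall route --- forget down to Kontsevich's stack $\overline{\Mf}_{g,\nf_{NS}+\nf_{RR}}(Y,\beta_0)$ and pull properness and the DM property back along the fibers of spin structures --- is the standard one, and in the end you invoke \cite[Thm.\! 2.3.2]{JaKiVa05}, which is precisely all the paper itself does: its ``proof'' of the proposition is that citation, with no further argument. So in substance you agree with the paper. But your scaffolding contains one genuine error: the forgetful morphism $\pi$ is \emph{not} representable, and in particular is not ``representable by finite schemes.'' Every spin structure carries the extra automorphism $-\mathrm{id}_{\Lcl}$ covering the identity of the underlying stable map: compatibility with $\varpi\colon \Lcl \iso \Homsh_{\Oc_X}(\Lcl,\omega_{X/S}(Z))$ forces a scalar automorphism $t$ of $\Lcl$ to satisfy $t\cdot\varpi = t^{-1}\cdot\varpi$, i.e.\ $t^2=1$, so the automorphism group of a spin map surjects onto that of the stable map with kernel of order $2$. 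A representable morphism of stacks must be injective on automorphism groups, so $\pi$ cannot be representable; its fibers are finite \emph{stacks} (over the smooth locus, disjoint unions of $B\mu_2$'s indexed by a $\Pic(X_s)[2]$-torsor-or-empty set --- note also that square roots of $\omega_{X_s}(Z_s)$ exist only when $2g-2+\nf_{RR}$ is even), not finite schemes. Your claimed descent mechanism therefore fails as stated. A second, more minor slip: the fiber is in general \emph{not} unramified over the boundary --- the compactified spin moduli is finite of degree $2^{2g}$ over the moduli of curves only when points are counted with multiplicity, and the map is ramified along boundary strata where $\Lcl$ fails to be locally free.

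The conclusion is still recoverable with a corrected mechanism, which is how Cornalba, Jarvis, and \cite{JaKiVa05} actually argue: $\pi$ is proper and quasi-finite with finite relative inertia; DM-ness is then checked directly from the fact that automorphism groups of geometric points are finite and reduced (extensions of the finite reduced automorphism group of the stable map by the $2$-torsion scalar automorphisms of $(\Lcl,\varpi)$, reduced away from characteristic $2$), and properness descends from $\overline{\Mf}_{g,\nf_{NS}+\nf_{RR}}(Y,\beta_0)$ along the proper quasi-finite $\pi$, e.g.\ via the valuative criterion together with the limit theory of torsion-free rank-one sheaves at the boundary. Be aware, finally, that the step you defer to \cite[Thm.\! 2.3.2]{JaKiVa05} is not merely ``finiteness of generalized theta characteristics at the boundary'': that theorem \emph{is} the proposition being proved. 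So as written your argument either rests entirely on the citation (in which case it coincides with the paper and the reduction to Kontsevich's stack is superfluous), or the reduction is meant to carry the load, in which case the representability step must be replaced as above.
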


\subsection{The bosonic reduction of the superstack $\Sf\Mf(\Ycal,\beta)$}
\label{bosonic}
In this section we describe the relation between   the bosonic reduction of the superstack of stable supermaps $\Sf\Mf(\Ycal,\beta)$ and the stack of stable spin maps.  

 The bosonic reduction of a superstack  was defined in  \cite{CodViv17}, see also \cite[Def.\! 2.25]{BHRSS25}. We apply that construction
to characterize the  bosonic reduction 
 $\Sf\Mf(\Ycal,\beta)_{bos}$ of the   
 Deligne-Mumford superstack $\Sf\Mf(\Ycal,\beta)$.  By Theorem \ref{thm:DMstack}  and \cite[Prop.\! 3.20]{BHRSS25}, this  is a Deligne-Mumford stack
 over the category of ordinary schemes, whose objects over a scheme $S$ are the stable  supermaps
$F=(f\colon\Xcal\to S, \{\Xcal_i\},\{\Zc_j\}, \bar\delta, \phi)$ into $\Ycal$ of class $\beta$.

Since  $\beta$   is the cycle of the image in $\Ycal$ of  a superscheme of dimension $1|1$, it  is of the form $\beta=(1-\Pi)\beta_0$ for a cycle $\beta_0$ on the bosonic reduction $Y$ of $\Ycal$; moreover   the condition $\phi_\ast[\Xcal_s]=\beta$ for a geometric point $s\in S$  is equivalent to $(\phi_{bos})_\ast[X_s]=\beta_0$.
 
\begin{lemma}\label{lem:stmapsoversch} The data of a stable map $F=(f\colon\Xcal\to S, \{\Xcal_i\},\{\Zc_j\}, \bar\delta, \phi)$ into $\Ycal$ of class $\beta$ over an ordinary scheme $S$ is equivalent to the following data:
\begin{enumerate}
\item
A stable spin map $F_{spin}:=(f_{bos}\colon X \to S, \{X_i\},\{Z_j\}, \varpi, \phi_{bos})$ into $Y$ of class $\beta_0$;
\item
A morphism $\lambda_F\colon\phi_{bos}^\ast \F_\Ycal \to \Lcl$ of $\Oc_X$-modules, where  $\Lcl$ is defined by $\Oc_\Xcal=\Oc_X\oplus\Pi \Lcl$.
\end{enumerate}
\end{lemma}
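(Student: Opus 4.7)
The plan is to unpack the data in each direction explicitly and match each piece with its bosonic counterpart. Since $S$ carries no odd parameters, the structure sheaf of any $1|1$-dimensional $S$-supercurve $f\colon\Xcal\to S$ splits canonically as $\Oc_\Xcal=\Oc_X\oplus\Pi\Lcl$ with $\Lcl$ a coherent $\Oc_X$-module (the odd ideal squares to zero in relative super-dimension $1|1$), and this canonical splitting is what drives the equivalence.

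Going from $F$ to $(F_{spin},\lambda_F)$: bosonic reduction of $f\colon\Xcal\to S$ yields a proper Gorenstein morphism $f_{bos}\colon X\to S$ with prestable nodal fibers, and the NS sub-superschemes $\Xcal_i$ and RR divisors $\Zc_j$ restrict to sections $X_i$ and degree-one relative divisors $Z_j$ lying in the smooth locus of $f_{bos}$. The SUSY epimorphism $\bar\delta\colon\Omega_f\to\omega_f(\Zc)$ together with its kernel identification $\ker\bar\delta\iso\omega_f^{\otimes 2}(\Zc)$ restricts on the odd component to an isomorphism of $\Oc_X$-modules $\varpi\colon\Lcl\iso\Homsh_{\Oc_X}(\Lcl,\omega_X(Z))$; this is the expected local translation between a SUSY distribution with poles at the RR locus and a spin structure twisted by $Z$. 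For the morphism $\phi$, decomposing $\Oc_\Ycal=\Oc_Y\oplus\F_\Ycal$ and $\Oc_\Xcal=\Oc_X\oplus\Pi\Lcl$ and splitting the graded algebra map $\phi^\#$ accordingly, the even piece recovers $\phi_{bos}\colon X\to Y$, while the odd piece, converted to $\Oc_X$-linear form in the standard way, is precisely $\lambda_F\colon\phi_{bos}^\ast\F_\Ycal\to\Lcl$.

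Conversely, from $(F_{spin},\lambda_F)$, the ringed space $(X,\Oc_X\oplus\Pi\Lcl)$ with the obvious supercommutative multiplication (odd times odd vanishes) defines a supercurve whose NS and RR punctures are built from $X_i$ and $Z_j$ through \cite[Def.\! 2.13]{FKPpubl}. The spin isomorphism $\varpi$ determines $\bar\delta$ with its kernel condition by reversing the previous paragraph's correspondence, and $\phi^\#$ is defined by putting $\phi_{bos}^\#$ on even parts and $\lambda_F$ on odd parts; supercommutativity is automatic and mutual inverseness of the two constructions is routine.

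To finish, I need to match the cycle class and stability conditions. The equivalence $\phi_\ast[\Xcal_s]=\beta \iff (\phi_{bos})_\ast[X_s]=\beta_0$ is the observation made just before the lemma, using $\beta=(1-\Pi)\beta_0$. For stability, using the local model of a SUSY curve one checks that the bosonic reduction of $\omega_{\Xcal/S}^2$ is $\omega_{X/S}$, so the bosonic reduction of the ampleness line bundle in Definition \ref{def:stablemap2} is $\omega_{X/S}(W+Z)\otimes\phi_{bos}^\ast\Lcl_Y^{\otimes 3}=\Lcl_{F_{spin}}$; by \cite[Prop.\! A.2]{FKPpubl} strong relative ampleness is equivalent to relative ampleness of the bosonic reduction, so the stability condition in Definition \ref{def:stablemap2} agrees with the one in Definition \ref{def:spinmap}. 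The main obstacle is making the bijective translation between the SUSY datum $(\bar\delta, \ker\text{-condition})$ and the spin datum $\varpi$ fully canonical together with the bookkeeping of the RR twist; this reduces to a local calculation once the canonical splitting $\Oc_\Xcal=\Oc_X\oplus\Pi\Lcl$ is in hand.
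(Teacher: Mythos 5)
Your route is essentially the paper's: the canonical splitting $\Oc_\Xcal=\Oc_X\oplus\Pi\Lcl$ (valid because the odd relative dimension is $1$ and $S$ is ordinary), the translation between the SUSY datum $\bar\delta$ and the spin datum $\varpi$ (which the paper does not redo but simply cites from \cite{FKPpubl} and \cite{BrHR21}, so your deferral of this step to ``a local calculation'' is harmless), the identification of the bosonic reduction of $\Lcl_F$ with $\Lcl_{F_{spin}}$ together with the transfer of ampleness via \cite[Prop.\! A.2]{FKPpubl}, and the matching of cycle classes via $\beta=(1-\Pi)\beta_0$.

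There is, however, one genuine flaw in your treatment of $\phi$: in both directions you ``decompose $\Oc_\Ycal=\Oc_Y\oplus\F_\Ycal$'' and split $\phi^\#$ accordingly. This presumes that $\Ycal$ is split (or at least projected), which is not among the hypotheses: $\Ycal$ is an arbitrary projective superscheme, and when its odd dimension is at least $2$ the extension $0\to\Jc_\Ycal\to\Oc_\Ycal\to\Oc_Y\to 0$ need not split; moreover $\F_\Ycal$ is by definition the (parity-shifted) conormal sheaf $\Jc_\Ycal/\Jc_\Ycal^2$, not a direct summand of $\Oc_\Ycal$. The missing idea --- and precisely how the paper argues --- is that since $(\Pi\Lcl)^2=0$ in $\Oc_\Xcal$, the superalgebra morphism $\phi^\#\colon\Oc_\Ycal\to(\phi_{bos})_\ast\Oc_\Xcal$ annihilates $\Jc_\Ycal^2$, hence factors through $\Oc_\Ycal/\Jc_\Ycal^2$, and this quotient \emph{is} canonically split, with even part $\Oc_Y$ and odd part $\Jc_\Ycal/\Jc_\Ycal^2$. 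Only after this factoring is $\phi$ equivalent to the pair consisting of $\phi_{bos}$ and a morphism $\F_\Ycal\to(\phi_{bos})_\ast(\Pi\Lcl)$, i.e.\ to $\lambda_F\colon\phi_{bos}^\ast\F_\Ycal\to\Lcl$ by adjunction; the same observation is what makes your converse construction of $\phi^\#$ from $(\phi_{bos}^\#,\lambda_F)$ well defined for non-split $\Ycal$. With this correction inserted, your argument coincides with the paper's proof.
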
  
\begin{proof}
We have $\Oc_\Xcal=\Oc_X\oplus \Pi\Lcl$ for a relatively Cohen-Macaulay sheaf $\Lcl$, generically of rank one. It is known that a prestable SUSY curve over an ordinary scheme $(f\colon\Xcal\to S, \{\Xcal_i\},\{\Zc_j\}, \bar\delta)$ is equivalent to a prestable spin curve $(f\colon X\to S, \{W_i\},\{Z_j\}, \varpi)$ (see \cite{FKPpubl} or \cite{BrHR21}). Moreover, using \cite[Thm.\! 4.1]{FKPpubl}, we see that the bosonic reduction of the $\Oc_\Xcal$ module $\Lcl_F=\omega_{\Xcal/S}^2\big(\sum_{1\le i\le \nf_{NS}} \Wc_i + \sum_{1\le j\le \nf_{RR}} \Zc_j\big)\otimes \phi^\ast \Lcl_\Ycal^{\otimes 3}$ of Definition \ref{def:stablemap2} is the sheaf
$$
\Lcl_{F_{spin}}:=\omega_{X/S}(W+Z)\otimes \phi_{bos}^\ast \Lcl_Y^{\otimes 3}\,,
$$
where $\Lcl_Y=(\Lcl_\Ycal)_{bos}$, $W=\sum_{1\le i\le \nf_{NS}}W_i$ and $Z=\sum_{1\le j\le \nf_{RR}} Z_j$.
By \cite[Prop.\! A.2]{FKPpubl} the ampleness of $\Lcl_{F_{spin}}$ is equivalent to the ampleness of $\Lcl_F$. Then, $F$ is a stable supermap if and only if $(f_{bos}\colon X \to S, \{X_i\},\{Z_j\}, \varpi, \phi_{bos})$ is a stable spin map.

We now analize the relationship between $\phi\colon \Xcal \to \Ycal$ and its bosonic reduction $\phi_{bos}\colon X \to Y$. The morphism $\phi\colon \Xcal \to \Ycal$ is given by a superring sheaf morphism $\Oc_\Ycal \to (\phi_{bos})_\ast\Oc_\Xcal = (\phi_{bos})_\ast\Oc_X\oplus (\phi_{bos})_\ast(\Pi \Lcl)$ extending the morphism $\Oc_Y \to (\phi_{bos})_\ast\Oc_X$ induced by $\phi_{bos}$. Since $(\Pi\Lcl)^2=0$ in $\Oc_\Xcal$,  the ideal $\Jc_\Ycal^2$ is sent  to zero,  and then $\phi$ is characterized by $\phi_{bos}$ together with a sheaf morphism $\F_{\Ycal}=\Jc_\Ycal /\Jc_\Ycal^2 \to (\phi_{bos})_\ast(\Pi \Lcl)$. This is the same as a morphism $\phi_{bos}^\ast \F_\Ycal \to \Lcl$, so that the proof is finished.
\end{proof}

Lemma \ref{lem:stmapsoversch}  allow us to describe a stable supercurve $F$ over an ordinary scheme $S$ as a pair $F\equiv (F_{spin},\lambda_F)$, where $F_{spin}$ is a stable spin map and $\lambda_F\colon\phi_{bos}^\ast \F_\Ycal \to \Lcl$ is a morphism of $\Oc_X$-modules. 
 It follows that we have a forgetful stack morphism between Deligne-Mumford stacks
$$
\Ff\colon \Sf\Mf_{g,\nf_{NS},\nf_{RR}}(\Ycal,\beta)_{bos} \to \Mf^{spin}_{g,\nf_{NS},\nf_{RR}}(Y,\beta_0)
$$
from the bosonic reduction of the superstack of stable supermaps to the stack of stable spin maps, given by $F = ( (F_{spin},\lambda_F)\mapsto F_{spin})$. 
\begin{prop}\label{prop:bosredDM} The forgetful stack morphism $\Ff$ is schematic and affine. 
\end{prop}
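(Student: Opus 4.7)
The plan is to use Lemma \ref{lem:stmapsoversch} to identify the fibers of $\Ff$ explicitly with a Hom-functor, and then apply the standard representability theorem for Hom-sheaves. So, given a scheme $S$ and a morphism $S\to \Mf^{spin}_{g,\nf_{NS},\nf_{RR}}(Y,\beta_0)$ classifying a stable spin map $F_{spin}=(f\colon X\to S, \{W_i\}, \{Z_j\}, \Lcl, \varpi, \phi_{bos})$, I would consider the 2-fiber product
$$
\Pc := \Sf\Mf(\Ycal,\beta)_{bos}\times_{\Mf^{spin}(Y,\beta_0)} S
$$
and reduce the claim to showing that $\Pc$ is represented by an affine $S$-scheme. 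By Lemma \ref{lem:stmapsoversch}, the functor $\Pc$ sends an $S$-scheme $g\colon T\to S$ to the set of $\Oc_{X_T}$-linear morphisms $\lambda\colon g^\ast(\phi_{bos}^\ast \F_\Ycal) \to g^\ast \Lcl$, where $X_T:=X\times_S T$; in other words, $\Pc$ coincides with the Hom-functor of the pair of coherent sheaves $(\phi_{bos}^\ast\F_\Ycal,\Lcl)$ on $X\to S$.

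The next step is to invoke Grothendieck's representability theorem for the $\Homsh$-functor of two coherent sheaves on a proper morphism, one of which is flat over the base (as formulated e.g.~in EGA III or Nitsure's notes on Hilbert and Quot schemes). The hypotheses are met: $f\colon X\to S$ is proper, the sheaf $\phi_{bos}^\ast \F_\Ycal$ is coherent on $X$ (since $\F_\Ycal$ is coherent on $Y$ and $\phi_{bos}$ is a scheme morphism), and $\Lcl$ is coherent and $f$-flat by Definition \ref{def:spin}. The theorem then produces a coherent sheaf $\Qc$ on $S$, whose formation commutes with arbitrary base change, such that for every $g\colon T\to S$ there is a functorial isomorphism
$$
\Hom_{\Oc_{X_T}}(g^\ast\phi_{bos}^\ast \F_\Ycal,\, g^\ast \Lcl)\cong \Hom_{\Oc_T}(g^\ast \Qc,\, \Oc_T).
$$
Consequently $\Pc$ is represented by the linear scheme $\Vs(\Qc)$, namely the relative spectrum over $S$ of the symmetric algebra $\Sym_{\Oc_S}\Qc$, which is affine over $S$. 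Since this holds for every test scheme $S$, the morphism $\Ff$ is schematic and affine.

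The main technical point is the verification of the hypotheses of the Hom-representability theorem; here they come essentially for free from the definitions of stable spin map and prestable spin curve, particularly the $f$-flatness of $\Lcl$. A minor but essential preliminary check is that Lemma \ref{lem:stmapsoversch} imposes no additional compatibility conditions on $\lambda$ beyond its existence as an $\Oc_X$-linear map, so that the description of $\Pc$ as a Hom-functor is exact. Note that this description also makes it transparent that the fibers of $\Ff$ are the linear schemes anticipated in the introduction to Section \ref{red}, which in general prevents $\Sf\Mf(\Ycal,\beta)$ from being proper.
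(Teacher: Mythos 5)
Your proposal is correct and follows essentially the same route as the paper's proof: both identify the fiber of $\Ff$ over a stable spin map $F_{spin}$ via Lemma \ref{lem:stmapsoversch} with the Hom-functor $T\mapsto \Hom_{\Oc_{X_T}}(\psi_T^\ast(\F_\Ycal)_T,\Lcl_T)$ and then invoke the representability theorem of \cite[Cor.\! 7.7.8]{EGAIII-II} (or \cite[Thm.\! 5.8]{Ni07}) to represent it by a linear scheme $\Vs(\Qc)=\Spec(\Sym(\Qc))$, affine over the base. Your explicit verification of the hypotheses (properness of $f$, coherence of $\phi_{bos}^\ast\F_\Ycal$, and $f$-flatness of $\Lcl$ from Definition \ref{def:spin}) is a welcome addition, and your placement of $\Qc$ as a coherent sheaf on $S$ is the correct reading of the cited theorem.
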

\begin{proof} Let $S\to \Mf^{spin}_{g,\nf_{NS},\nf_{RR}}(Y,\beta_0)$ be a stack morphism from a scheme $S$ given by a stable spin map  $F_{spin}=(f\colon X\to S,\{W_i\}, \{Z_j\},  \Lcl,\varpi, \psi)$. By Lemma \ref{lem:stmapsoversch} the fiber of $\Ff$ over $F_{spin}$ is the stack whose objects over a $S$-scheme $T$ are the 
morphisms $\psi_T^\ast (\F_\Ycal)_T \to \Lcl_T$ of $\Oc_{X_T}$-modules. By \cite[Cor.\! 7.7.8]{EGAIII-II} (see also \cite[Thm.\! 5.8]{Ni07}), this stack is representable by a linear scheme $V(\Qc)=\Spec( \Sym (\Qc))\to X$, where $\Qc$ is a coherent sheaf on $X$. 
\end{proof}

Now $\Mf^{spin}_{g,\nf_{NS},\nf_{RR}}(Y,\beta_0)$ is proper by Proposition \ref{prop:spinstack},  but the   stack $\Sf\Mf_{g,\nf_{NS},\nf_{RR}}(\Ycal,\beta)_{bos}$ may fail to be proper: it is proper if and only if $\Ff$ is an isomorphism.  As a consequence, the superstack $\Sf\Mf_{g,\nf_{NS},\nf_{RR}}(\Ycal,\beta)$ of stable supermaps may fail to be proper. Note, on the other hand, that when the target $\Ycal$ is bosonic, i.e., it is an ordinary scheme $Y$,  the stack $\Sf\Mf_{g,\nf_{NS},\nf_{RR}}(Y,\beta)_{bos}$ coincides with $\Mf^{spin}_{g,\nf_{NS},\nf_{RR}}(Y,\beta_0)$, and therefore is always proper.

We shall return to this point in Section \ref{dim} (Example \ref{notprop}).
%
%
%
%

 \section{Characteristic classes and super Grothendieck-Riemann-Roch}
 \label{char}
 
 To compute the virtual dimension of the stack of stable supermaps one needs a super version of the 
 Grothendieck-Riemann-Roch theorem. This has been proved by Manin, Penkov and Voronov in \cite{VMP}. In this section we make a review of the constructions leading to that theorem; on the one hand we shall streamline the exposition with respect to \cite{VMP}, including only the results that we shall need, but on the other hand we shall also add some clarificatory details and give some proofs that in \cite{VMP} were not included, or just sketched.  
  
 In the following subsections we recall from  \cite{VMP}  a version of K-theory for superschemes, which is a not entirely trivial extension to superschemes of the approach to ordinary K-theory of \cite{Ma69}, and an approach to characteristic classes for super vector bundles; we give some details of a suitable splitting principle; we introduce a Todd character, and finally state  the super Grothendieck-Riemann-Roch theorem and reproduce its proof.

 \subsection{Super K-theory}

  Let $\Xcal = (X,\Oc_\Xcal)$    be a   noetherian    superscheme over an algebraically closed field.
   $K_\bullet^S(\Xcal)$ and $K^\bullet_S(\Xcal)$ will denote the K-theory groups of coherent and locally free finitely generated graded $\Oc_\Xcal$-modules, respectively, while $K_\bullet^S(X)$ and $K^\bullet_S(X)$   will denote the K-theory groups of $\Z_2$-graded  coherent and locally free finitely generated graded $\Oc_X$-modules. We shall assume that the natural morphism $K^\bullet(X) \to K_\bullet(X)$
   between the ordinary K-theory groups of the bosonic reduction $X$ is an isomorphism (this happens for instance when $X$ is a smooth variety). 
   
  $\cl_\bullet $ and $\cl^\bullet$  will denote the operation of taking the classes in  $K_\bullet^S(\Xcal)$ and $K^\bullet_S(\Xcal)$, respectively, while
  we shall denote by $\cl$ the operation of taking the class   in both isomorphic rings $K_\bullet^S(X)$ and $K^\bullet_S(X)$. The group $K^\bullet_S(\Xcal)$ is a commutative ring with respect to the tensor product operation. It contains distinguished elements $ 1 = \cl^\bullet(\Oc_\Xcal)$ and $\Pi = \cl^\bullet(\Pi\Oc_\Xcal)$. Note that $\Pi^2=1$.
  
  \begin{defin}  If $f\colon\Xcal\to \Ycal$ is a proper morphism, the morphism $f_!^S \colon K_\bullet^S(\Xcal)\to  K_\bullet^S(\Ycal)$ is defined by 
  $$ f_!^S(\cl_\bullet \Fc)  = \sum_i (-1)^i \cl_\bullet (R^if_\ast \Fc).$$
  \end{defin}
  
  \begin{defin}  We shall denote by $j\colon K^\bullet_S(\Xcal) \to K^\bullet_S(X)$ the morphism which takes the ``twisted graded module''  $ j(\cl^\bullet(\Ec) )= \cl(\tgr \Ec)$, where
  $$ \tgr \Ec =  \bigoplus_i \Pi^i \Jc^i\Ec/\Jc^{i+1}\Ec. $$ Then we set
  $$ KS(\Xcal) = \Im  j \subset K^\bullet_S(X).$$
  \end{defin}
  
We shall denote by $\cl^S$ the composition $j\circ\cl^\bullet $.

  \begin{prop} If $i\colon X \to \Xcal$ is the canonical embedding, the morphism $i_!^S \colon K_\bullet^S(X)\to  K_\bullet^S(\Xcal)$ is an isomorphism.
\label{isoKS}  \end{prop}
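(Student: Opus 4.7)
The plan is to run a dévissage argument reducing the statement to the square-zero case, and then to exhibit an explicit inverse. Let $\Jc \subset \Oc_\Xcal$ be the ideal defining $i$; since $\Xcal$ is a noetherian superscheme, $\Jc$ is nilpotent, say $\Jc^{N+1}=0$. Consider the chain of closed sub-superschemes $X = X_0 \hookrightarrow X_1 \hookrightarrow \cdots \hookrightarrow X_N = \Xcal$ with $\Oc_{X_k} := \Oc_\Xcal/\Jc^{k+1}$. Each intermediate immersion $X_{k-1}\hookrightarrow X_k$ is defined by the ideal $\Jc^k/\Jc^{k+1}$, whose square is $\Jc^{2k}/\Jc^{k+1}=0$ inside $\Oc_{X_k}$ for $k\geq 1$. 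Since $i_!^S$ is functorial in $i$ and factors through the corresponding composition, it suffices to prove the proposition when $\Jc^2=0$, which I assume henceforth.

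In this case surjectivity of $i_!^S$ is immediate from the filtration $\Fc \supset \Jc\Fc \supset 0$ of any coherent $\Oc_\Xcal$-module $\Fc$: both subquotients are annihilated by $\Jc$ and hence arise as $i_\ast$ of $\Z_2$-graded coherent $\Oc_X$-modules, so that
$$ \cl_\bullet(\Fc) \;=\; i_!^S\bigl(\cl(\Jc\Fc) + \cl(\Fc/\Jc\Fc)\bigr). $$
This motivates defining a candidate inverse $\sigma\colon K_\bullet^S(\Xcal)\to K_\bullet^S(X)$ on generators by
$$ \sigma(\cl_\bullet \Fc) \;:=\; \cl(\Jc\Fc) + \cl(\Fc/\Jc\Fc). $$
Granting well-definedness, $\sigma\circ i_!^S=\op{id}$ is trivial (for $\Fc=i_\ast\F$ one has $\Jc\Fc=0$ and $\Fc/\Jc\Fc=\F$), while $i_!^S\circ\sigma=\op{id}$ is exactly the surjectivity identity above.

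The step I expect to be the main obstacle is the additivity of $\sigma$ on a short exact sequence $0\to\Fc'\to\Fc\to\Fc''\to 0$. The subtlety is that the inclusion $\Jc\Fc'\subset \Jc\Fc\cap \Fc'$ can be strict, producing a possibly nonzero $\Oc_X$-module $K := (\Jc\Fc\cap\Fc')/\Jc\Fc'$. A snake-lemma style diagram chase produces two short exact sequences of $\Oc_X$-modules in the sub-row,
$$ 0 \to \Jc\Fc'\to \Jc\Fc\cap\Fc'\to K\to 0, \qquad 0\to \Jc\Fc\cap\Fc'\to \Jc\Fc\to \Jc\Fc''\to 0, $$
and two dual ones in the quotient row,
$$ 0\to K\to \Fc'/\Jc\Fc'\to \Fc'/(\Jc\Fc\cap\Fc')\to 0, \qquad 0\to \Fc'/(\Jc\Fc\cap\Fc')\to \Fc/\Jc\Fc\to \Fc''/\Jc\Fc''\to 0. $$
Summing the induced relations in $K_\bullet^S(X)$, the class $\cl(K)$ appears with opposite signs in the sub- and quotient-row contributions and cancels; what remains is exactly $\sigma(\cl_\bullet\Fc')+\sigma(\cl_\bullet\Fc'')$, which proves additivity, hence well-definedness of $\sigma$, and concludes the argument.
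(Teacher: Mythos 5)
Your proposal is correct, and it is worth comparing with the paper's proof, which is a single sentence: the inverse of $i_!^S$ sends $\cl_\bullet(\M)$ to $\cl(\gr\M)$, where $\gr\M=\bigoplus_i \Jc^i\M/\Jc^{i+1}\M$ is the associated graded of the full $\Jc$-adic filtration, with both the well-definedness of this assignment on $K_\bullet^S(\Xcal)$ and the two composition identities left implicit (they are the standard d\'evissage facts for nilpotent thickenings, as in the bosonic case of \cite{Ma69}). Your argument has the same germ --- in the square-zero case your $\sigma(\cl_\bullet\Fc)=\cl(\Jc\Fc)+\cl(\Fc/\Jc\Fc)$ is exactly $\cl(\gr\Fc)$ --- but you structure it differently: you first factor $i$ through the chain of square-zero extensions $\Oc_{X_k}=\Oc_\Xcal/\Jc^{k+1}$, using that pushforward along closed immersions is exact so that $i_!^S$ for the composite is the composite of the intermediate maps, and then you verify additivity of $\sigma$ honestly, with the correction module $K=(\Jc\Fc\cap\Fc')/\Jc\Fc'$ cancelling between the sub-row and quotient-row sequences. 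Both checks are right: all the terms in your four sequences are killed by $\Jc$ (e.g.\ $\Jc\cdot(\Jc\Fc\cap\Fc')\subset\Jc^2\Fc=0$), hence are $\Z_2$-graded coherent $\Oc_X$-modules, and $\cl(K)$ indeed enters with opposite signs. What the reduction buys is that the discrepancy between the induced filtration on $\Fc'$ and its own $\Jc$-adic filtration is measured by a single module; proving additivity of the paper's full $\gr$ directly would instead require a Zassenhaus-type refinement comparison of two filtrations, which your d\'evissage avoids at the cost of proving a slightly more general lemma --- note that for $k\ge 2$ the sub-superscheme $X_{k-1}\hookrightarrow X_k$ is not a bosonic reduction, so your square-zero statement must be (and, since your argument nowhere uses that the sub is bosonic, genuinely is) proved for an arbitrary square-zero closed immersion of noetherian superschemes. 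Finally, you correctly keep the inherited gradings with no parity twists: the twist $\Pi^i$ appears only in $\tgr$ and the map $j$, not in the inverse of $i_!^S$.
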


  \begin{proof}  An inverse for $i_!^S$ is provided by the morphism which sends $\cl_\bullet(\M)$ to the class of the graded object $\gr\M =  \bigoplus_i  \Jc^i\M/\Jc^{i+1}\M$ of $\M$. 
  \end{proof}
  
 Let $N_\Xcal$ be the normal sheaf of the embedding $i\colon X \to \Xcal$. We simply write $N$ for $N_\Xcal$ when no confusion may arise. Let $$\sigma_1(N^\ast) =\sum_i\cl(\Pi^i \operatorname{Sym}^i(N^\ast)) =\cl(\tgr\Oc_\Xcal) = j(1)
\in K_\bullet^S(X)\,.$$

  \begin{lemma}\label{lem:KS} $j(\cl^\bullet \Ec) = \cl(\Ec\redu) \cdot \sigma_1(N^\ast)$,  where $\Ec\redu=i^\ast\Ec$ is the bosonic reduction of $\Ec$, and $KS(\Xcal) = \sigma_1(N^\ast) \cdot  K^\bullet_S(X)$ if $\Xcal$ is projected.
  \end{lemma}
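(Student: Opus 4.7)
The plan is to treat the two claims separately, deriving the second from the first together with the projectedness hypothesis.

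For the first identity, the strategy is to establish, for any locally free graded $\Oc_\Xcal$-module $\Ec$ of finite rank, a canonical isomorphism of super $\Oc_X$-modules
$$
\tgr\Ec \;\simeq\; \tgr\Oc_\Xcal \otimes_{\Oc_X} \Ec\redu .
$$
The argument reduces to a local computation. On a trivializing open $\Uf\subset\Xcal$ one has $\Ec\vert_\Uf \cong \Oc_\Xcal^{\,p\vert q}\vert_\Uf$, so $\Jc^k\Ec = \Jc^k\cdot \Oc_\Xcal^{\,p\vert q}$ and therefore
$$
\Jc^k\Ec/\Jc^{k+1}\Ec \;\cong\; (\Jc^k/\Jc^{k+1})\otimes_{\Oc_X}\Ec\redu \;=\; \operatorname{Sym}^k(N^\ast)\otimes_{\Oc_X}\Ec\redu .
$$
These local isomorphisms are functorial in $\Ec$ and hence glue; inserting the parity twists $\Pi^k$ built into the definition of $\tgr$ then yields the asserted global isomorphism. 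Passing to classes in $K^\bullet_S(X)$ and using that the product there is induced by tensor product, one obtains $j(\cl^\bullet\Ec) = \cl(\tgr\Ec) = \sigma_1(N^\ast)\cdot \cl(\Ec\redu)$, which is the first claim.

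The second identity follows almost formally from the first. The inclusion $KS(\Xcal) = \Im j \subseteq \sigma_1(N^\ast)\cdot K^\bullet_S(X)$ is immediate, because by the first part each generator $j(\cl^\bullet\Ec)$ of $\Im j$ has the form $\sigma_1(N^\ast)\cdot \cl(\Ec\redu)$. For the reverse inclusion the plan is to exploit projectedness: fix a retraction $\pi\colon\Xcal\to X$ of the canonical closed embedding $i\colon X\hookrightarrow \Xcal$. For any locally free graded $\Oc_X$-module $\Fc$, the pull-back $\pi^\ast\Fc$ is locally free on $\Xcal$ and satisfies $(\pi^\ast\Fc)\redu = i^\ast\pi^\ast\Fc = \Fc$. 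Applying the first identity to $\Ec = \pi^\ast\Fc$ gives
$$
j(\cl^\bullet(\pi^\ast\Fc)) = \sigma_1(N^\ast)\cdot \cl(\Fc),
$$
so every element of the form $\sigma_1(N^\ast)\cdot \cl(\Fc)$ lies in $\Im j$. Since $K^\bullet_S(X)$ is generated as an abelian group by classes of locally free graded sheaves, the equality $KS(\Xcal)= \sigma_1(N^\ast)\cdot K^\bullet_S(X)$ then follows by $\Z$-linearity.

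The one place where I expect to have to be careful is the identification $\tgr\Ec \simeq \tgr\Oc_\Xcal\otimes_{\Oc_X}\Ec\redu$ as graded \emph{super} $\Oc_X$-modules. Because $\Jc$ is generated by odd sections, the subquotient $\Jc^k\Ec/\Jc^{k+1}\Ec$ carries an intrinsic parity shift of $k\bmod 2$ relative to $\Ec\redu$; this is exactly the shift absorbed by the factor $\Pi^k$ in the definition of $\tgr$, so the parities match, but writing this out on a local graded basis of $\Oc_\Xcal$ over $\Oc_X$ and tracking the signs is the only genuinely non-formal step in the argument. Everything else is a routine application of the definitions of $j$ and of a projected superscheme.
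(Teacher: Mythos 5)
Your argument is correct. Note that the paper itself states Lemma \ref{lem:KS} without proof (it is imported from \cite{VMP}), so there is no in-paper argument to compare against; what you wrote is the evident intended one. The only step to phrase more carefully is the gluing: local isomorphisms chosen via trivializations do not glue merely because they are ``functorial in $\Ec$.'' The clean fix is to start from the canonical multiplication morphism $\Jc^k\otimes_{\Oc_\Xcal}\Ec \to \Jc^k\Ec$, which descends to a globally defined map $(\Jc^k/\Jc^{k+1})\otimes_{\Oc_X}\Ec\redu \to \Jc^k\Ec/\Jc^{k+1}\Ec$; your computation on a trivializing cover then shows exactly that this canonical map is locally, hence globally, an isomorphism, and no gluing argument is needed. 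With that rephrasing, the first identity, the parity bookkeeping via the twists $\Pi^k$, and the projected case --- where pulling back along a retraction $\pi$ of $i$ and using $(\pi^\ast\Fc)\redu=\Fc$ together with $\Z$-linearity gives the reverse inclusion --- are all complete.
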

  
  \begin{rem} \label{sigmainvert}
 $\sigma_1(N^\ast)$ is invertible in  $K_\bullet^S(X)\otimes \Q$. Note that the usual algebraic K-theory $K_\bullet(X)$ of a variety $X$ over $k$ has a ring structure given by
$$ \cl(\Ec_1) \cdot \cl(\Ec_2) = \sum_{j=0}^{\dim X} (-1)^j \cl (\mathcal Tor_j^{\Oc_X}(\Ec_1,\Ec_2)).$$
 The Chern character establishes an isomorphism of 
 $K_\bullet(X)\otimes \Q$ with the rational Chow ring $A^\bullet(X)\otimes \Q$, where the ring structure is given by the intersection product. As a result any element in $K_\bullet(X)$ which is of the form
 $$ 1 + \text{terms that in $A^\bullet(X)\otimes \Q$ have positive degree} $$
 is invertible.
 Since $$ K^S_\bullet(X) = K_\bullet(X) \oplus\Pi K_\bullet(X)\simeq K_\bullet(X)\otimes_\Z \Z[\Pi]\,,$$
 the same applies to the ring $ K_\bullet^S(X)$.
 As the 0-order term of  $\sigma_1(N^\ast)$ is 1, $\sigma_1(N^\ast)$ is invertible.
 Generally speaking this does not imply that $j\colon K^\bullet_S(\Xcal) \to K^\bullet_S(X)$ is invertible as 
 $\sigma_1(N^\ast)^{-1}$ may not lie in $K^\bullet_S(\Xcal)$.
\end{rem}
  
  \begin{prop} \label{KS} One defines a product $*$ in $KS(\Xcal)$ by letting 
  $$\cl^S(\Ec_1) * \cl^S(\Ec_2) = \cl^S(\Ec_1\otimes\Ec_2).$$
  \begin{enumerate}
 \item This gives a ring structure to $KS(\Xcal)$ with identity element $\cl^S(\Oc_\Xcal) = \sigma_1(N^\ast)$.
  
\item If $x_1$, $x_2\in KS(\Xcal)$ then 
  $$ x_1 * x_2 = x_1\cdot x_2 \cdot  \sigma_1(N^\ast)^{-1}\quad \text{in}\ K^\bullet_S(X)\otimes\Q.$$
  
 \item  If $f\colon\Xcal\to\Ycal$ is any morphism, and $\Ec$ is a locally free $\Oc_\Ycal$-module, let
  $$ f_S^! (\cl^S\Ec) = \cl^S(f^\ast \Ec).$$
  \end{enumerate}
This  defines a ring homomorphism $f_S^!\colon KS(\Ycal)\to KS(\Xcal)$.
  \end{prop}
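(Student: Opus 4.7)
The plan is to reduce everything to computations in the ordinary commutative ring $K^\bullet_S(X)$ via Lemma \ref{lem:KS}, which provides the explicit formula $\cl^S(\Ec) = \cl(\Ec\redu)\cdot\sigma_1(N^\ast)$. With this identification in hand, well-definedness, the ring axioms, and the pullback formula all reduce to straightforward manipulations in $K^\bullet_S(X)$.

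I first verify part (2). Since $\Ec_1,\Ec_2$ are locally free, the restriction to the bosonic reduction commutes with tensor products, so $(\Ec_1\otimes\Ec_2)\redu = (\Ec_1)\redu\otimes(\Ec_2)\redu$. Lemma \ref{lem:KS} then gives
\[
\cl^S(\Ec_1\otimes\Ec_2) = \cl((\Ec_1)\redu)\cdot\cl((\Ec_2)\redu)\cdot\sigma_1(N^\ast),
\]
whereas the ordinary product in $K^\bullet_S(X)$ reads $\cl^S\Ec_1\cdot\cl^S\Ec_2 = \cl((\Ec_1)\redu)\cdot\cl((\Ec_2)\redu)\cdot\sigma_1(N^\ast)^2$. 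Dividing by $\sigma_1(N^\ast)$, which is invertible in $K^\bullet_S(X)\otimes\Q$ by Remark \ref{sigmainvert}, yields the formula of (2).

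Part (1) then follows quickly. For well-definedness of $*$: if $\cl^S\Ec_1 = \cl^S\Ec_1'$, that is $\cl((\Ec_1)\redu)\cdot\sigma_1(N^\ast) = \cl((\Ec_1')\redu)\cdot\sigma_1(N^\ast)$ in $K^\bullet_S(X)$, multiplying both sides by $\cl((\Ec_2)\redu)$ and applying Lemma \ref{lem:KS} to the right-hand side gives $\cl^S(\Ec_1\otimes\Ec_2) = \cl^S(\Ec_1'\otimes\Ec_2)$ integrally; the symmetric argument handles the second slot. Associativity, commutativity and bilinearity over addition are inherited directly from the corresponding properties of the tensor product of locally free sheaves. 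The identity is $\cl^S(\Oc_\Xcal) = \cl(\tgr\Oc_\Xcal) = \sigma_1(N^\ast)$ by the very definitions.

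For part (3), the pullback $f^\ast\Ec$ of a locally free sheaf is locally free, so $f^!_S$ takes values in $KS(\Xcal)$. Using $(f^\ast\Ec)\redu = (f\redu)^\ast(\Ec\redu)$ we obtain
\[
\cl^S(f^\ast\Ec) = (f\redu)^\ast\cl(\Ec\redu)\cdot\sigma_1(N_\Xcal^\ast).
\]
To check well-definedness, assume $\cl^S\Ec = \cl^S\Ec'$ in $KS(\Ycal)$. Pulling back the resulting identity in $K^\bullet_S(Y)$ by $f\redu$ and cancelling the common factor $(f\redu)^\ast\sigma_1(N_\Ycal^\ast)$, whose zero-degree term is $1$ and which is therefore invertible in $K^\bullet_S(X)\otimes\Q$ by the argument of Remark \ref{sigmainvert}, one deduces $(f\redu)^\ast\cl(\Ec\redu) = (f\redu)^\ast\cl(\Ec'\redu)$; multiplying by $\sigma_1(N_\Xcal^\ast)$ yields $\cl^S(f^\ast\Ec) = \cl^S(f^\ast\Ec')$. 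The ring homomorphism property follows at once from $f^\ast(\Ec_1\oplus\Ec_2) = f^\ast\Ec_1\oplus f^\ast\Ec_2$, $f^\ast(\Ec_1\otimes\Ec_2) = f^\ast\Ec_1\otimes f^\ast\Ec_2$ and $f^\ast\Oc_\Ycal = \Oc_\Xcal$. The subtlest point, and what I expect to be the main obstacle, is precisely this well-definedness in part (3): unlike the well-definedness in part (1), the cancellation of $(f\redu)^\ast\sigma_1(N_\Ycal^\ast)$ is only valid after tensoring with $\Q$, so the whole statement must be understood in rational super K-theory.
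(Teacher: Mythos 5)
Your proof is correct and follows essentially the same route as the paper's: the paper likewise obtains part (2) by applying Lemma \ref{lem:KS} to $\Ec_1\otimes\Ec_2$ and comparing with the product $\cl^S(\Ec_1)\cdot\cl^S(\Ec_2)$ in $K^\bullet_S(X)$, and it verifies associativity, the identity element $\sigma_1(N^\ast)=\cl^S(\Oc_\Xcal)$, and the homomorphism property of $f_S^!$ by the same direct tensor-product manipulations. Where you differ is that you also check well-definedness, which the paper's proof passes over in silence. Your integral argument for $*$ is sound, and is most cleanly packaged as the statement that $\ker j$ is an ideal of $K^\bullet_S(\Xcal)$: by Lemma \ref{lem:KS} extended linearly, $j(a)=i^\ast(a)\cdot\sigma_1(N^\ast)$, so $i^\ast(a)\cdot\sigma_1(N^\ast)=0$ forces $i^\ast(ab)\cdot\sigma_1(N^\ast)=i^\ast(b)\cdot\bigl(i^\ast(a)\cdot\sigma_1(N^\ast)\bigr)=0$; this also covers general elements of $KS(\Xcal)$, which are integer combinations of classes of sheaves rather than single classes, a case your sheaf-by-sheaf argument only reaches via bilinearity. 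Your caveat about part (3) is a fair and genuine observation: since the conormal map $f^\ast N_\Ycal^\ast\to N_\Xcal^\ast$ need not be an isomorphism, one cannot replace $f_{bos}^\ast\,\sigma_1(N_\Ycal^\ast)$ by $\sigma_1(N_\Xcal^\ast)$, and cancelling the former is only licensed after tensoring with $\Q$ (by the invertibility argument of Remark \ref{sigmainvert}); the paper's proof of (3), which only verifies $f_S^!(1)=1$ and multiplicativity, does not address this point at all, so your remark that well-definedness of $f_S^!$ should be understood in rational super K-theory (or under a hypothesis killing torsion, e.g.\ $K^\bullet_S(X)$ torsion-free) is a useful sharpening rather than a defect.
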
 
  \begin{proof} The three claims follows from direct computations. 
  
  1.  The associativity follows from the associativity of the tensor product, as
  $$ \left[ \cl^S(\Ec_1) * \cl^S(\Ec_2)\right] * \cl^S(\Ec_3) = \cl^S(\Ec_1\otimes\Ec_2) * \cl^S(\Ec_3) =  \cl^S(\Ec_1\otimes\Ec_2\otimes \Ec_3).$$
  
  Proof that $\sigma_1(N^\ast)$ is the identity element:
$$  \cl^S(\Ec) * \sigma_1(N^\ast)  =   \cl^S(\Ec) * \cl^S(\Oc_\Xcal) = 
    \cl^S(\Ec\otimes \Oc_\Xcal)= \cl^S(\Ec).
$$
  
   2. Let $x_1=\cl^S(\Ec_1)$, $x_2=\cl^S(\Ec_2)$: 
     \begin{align} x_1*x_2 &= \cl^S(\Ec_1) * \cl^S(\Ec_2)  = \cl^S(\Ec_1\otimes\Ec_2) =  \cl((\Ec_1\otimes\Ec_2)\redu)\cdot \sigma_1(N^\ast)  \\
     &= \cl((\Ec_1)\redu\cdot \cl((\Ec_2)\redu) \cdot \sigma_1(N^\ast) = \cl^S(\Ec_1) \cdot \cl^S(\Ec_2) \cdot \sigma_1(N^\ast)^{-1} = x_1 \cdot x_2 \cdot \sigma_1(N^\ast)^{-1}.
       \end{align}
   
   3.
   $$ f_S^!(1) = f_S^!(\cl^S(\Oc_\Ycal)) = \cl^S(f^\ast \Oc_\Ycal) = \cl^S(\Oc_\Xcal) = 1$$
  \begin{align} f_S^!(\cl^S(\Ec_1) *  \cl^S(\Ec_2)) &=  f_S^!(\cl^S(\Ec_1\otimes\Ec_2)) = \cl^S(f^\ast(\Ec_1\otimes\Ec_2)) \\
  &= \cl^S(f^\ast\Ec_1\otimes f^\ast\Ec_2) = \cl^S(f^\ast \Ec_1) * \cl^S(f^\ast \Ec_2)\\ & =  f_S^!(\cl^S(\Ec_1)) *  f_S^!(\cl^S(\Ec_2)).
\end{align}
  \end{proof}

 \subsection{Characteristic classes}
 The definition of the characteristic classes we recall here is the one in terms of the so-called $\gamma$-filtration in K-theory, as in \cite{Ma69} for the ordinary case. However the super version of it displays a few somehow unexpected features.
 
    \begin{defin} The $\Z$-graded ring $GK_S(X)$ is the graded ring of the $\gamma$-filtration $F^\bullet$ of $K^\bullet_S(X)\otimes \Q$.  In other words, $GK_S(X)\simeq GK(X)\otimes_\Q \Q[\Pi]$, where $GK(X)$ is the graded ring of the ordinary $\gamma$-filtration of $K(X)\otimes\Q$. \end{defin}

 We define characteristic classes of  locally free $\Oc_\Xcal$-modules with values in $GK_S(X)$.  If $\Ec$ is a locally free $\Oc_\Xcal$-module, we set $\Ec\redu = \Ec_0\oplus \Ec_1$  (decomposition into the even and the odd parts) and $ \Pi^\Ec = \Pi^{\rk\Pi\Ec_1}$.   
 
 \begin{defin}[Chern classes and Chern character] \label{defchar} One defines:
  \begin{itemize} \item $c_0(\Ec) = \Pi^\Ec$; 
 \item $c_i(\Ec) = \Pi^\Ec c_i(\Ec_0-\Pi\Ec_1) = \Pi^\Ec \gamma^i(\cl \Ec_0 - \cl \Pi\Ec_1-\rk\Ec_0+\rk\Pi\Ec_1)$ {\rm mod} $F^{i+1}$ for $i>0$;
 \item $c_t(\Ec) = \sum_i c_i(\Ec)\,t^i $;
 \item $\ch(\Ec) = \ch(\Ec_0)-\Pi\ch(\Pi\Ec_1)$.
 \end{itemize} 
 \end{defin}
 \begin{rem}   Note that $c_i(\Ec)$ always has the parity of $\rk\Pi\Ec_1$, while in general $\ch_i(\Ec)$ has an even and an odd part. So $\ch_1(\Ec)\ne c_1(\Ec)$ in general.   \end{rem}
 
 \begin{prop}[Properties of characteristic classes] \label{propchar} \ 
 \par
 \begin{enumerate} \item 
 $$ \ch(\Ec) = \sum_{i=1}^{\rk\Ec_0} e^{a_i(\Ec_0)} - \Pi \sum_{i=1}^{\rk\Pi\Ec_1} e^{-\Pi a_i(\Ec_1)}$$
 where $$c_t(\Ec_0)=\prod_{i=1}^{\rk\Ec_0}(1+a_i(\Ec_0)t), \qquad c_t(\Ec_1) =\prod_{i=1}^{\rk\Pi\Ec_1}(\Pi+a_i(\Ec_1)t)\,.
 $$
 \item $\ch (\Oc_\Xcal) =1,\quad \ch(\Pi\Oc_\Xcal) = - \Pi.$
 \item $c_1(\Lcl)=[\cl(\Lcl\redu)]-1$ if $\rk \Lcl=1\vert 0$, while $c_1(\Lcl) = \Pi-[\cl(\Lcl\redu])$ if $\rk\Lcl=0\vert 1$, where $[\ ]$ is the class in $GK_S(X)$ (later on we shall omit writing the  square brackets).
 \item $c_1(\Lcl^\ast)=-c_1(\Lcl)$, $c_1(\Pi\Ec) = -c_1(\Ec) \Pi^{\rk\Ec_0+\rk\Pi\Ec_1}$, $\ch(\Pi\Ec) = -\Pi \ch(\Ec)$.
 \item The Chern classes are functorial with respect to pullbacks,
 $f^\ast c_i(\Ec) = c_i(f^\ast \Ec) $. 
 \item If $ 0 \to \Ec_1\to\Ec\to\Ec_2\to 0$ is exact, then
 $$c_t(\Ec) = c_t(\Ec_1)\,c_t(\Ec_2) \qquad \text{and} \qquad \ch(\Ec) = \ch(\Ec_1) + \ch(\Ec_2)\,.
 $$
 \item $\ch(\Ec_1\otimes\Ec_2) = \ch(\Ec_1)\ch(\Ec_2)$.
 \end{enumerate}
 \end{prop}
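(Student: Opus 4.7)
The strategy is to reduce each assertion to a classical identity for Chern classes of the two purely even $\Oc_X$-modules $\Ec_0$ and $\Pi\Ec_1$ produced from $\Ec$ by bosonic reduction and parity reversal. Indeed, Definition \ref{defchar} expresses the super characteristic data of $\Ec$ as
$$c_i(\Ec) = \Pi^\Ec\,\gamma^i\bigl(\cl\Ec_0 - \cl\Pi\Ec_1 - \rk\Ec_0 + \rk\Pi\Ec_1\bigr)\ \mathrm{mod}\ F^{i+1},\qquad \ch(\Ec) = \ch(\Ec_0) - \Pi\,\ch(\Pi\Ec_1),$$
where the right-hand sides involve only ordinary $\gamma$-operations and Chern characters on $K(X)\otimes\Q$. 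Since $GK_S(X)=GK(X)\otimes_\Q\Q[\Pi]$ is a free extension of scalars, each identity may be checked componentwise in $\Pi$ and then reduces to the corresponding classical statement for even $\Oc_X$-modules.

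Item (1) follows from the classical splitting principle applied separately to $\Ec_0$ and $\Pi\Ec_1$: writing formal Chern roots $c_t(\Ec_0)=\prod_i(1+l_i t)$ and $c_t(\Pi\Ec_1)=\prod_j(1+m_j t)$, I would set $a_i(\Ec_0)=l_i$ and $a_j(\Ec_1)=-\Pi m_j$; the proposed factorizations and the exponential formula for $\ch(\Ec)$ then match the definitions by direct substitution, using $\Pi^2=1$. Items (2) and (3) are obtained by unfolding the definition on $\Oc_\Xcal$-modules of rank $1|0$ or $0|1$, using $(\Oc_\Xcal)_0=\Oc_X$, $(\Oc_\Xcal)_1=0$, $(\Pi\Oc_\Xcal)_1=\Pi\Oc_X$, and $\gamma^1(x)=x$. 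For (4), the identifications $(\Pi\Ec)_0=\Pi\Ec_1$, $(\Pi\Ec)_1=\Pi\Ec_0$, and $\Pi^{\Pi\Ec}=\Pi^{\rk\Ec_0}$, combined with the classical duality $c_1(L^{-1})=-c_1(L)$ on line bundles, yield the three stated identities.

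For (5), pullback commutes with bosonic reduction and parity reversal, so $(f^\ast\Ec)_0=f^\ast\Ec_0$ and $\Pi(f^\ast\Ec)_1=f^\ast(\Pi\Ec_1)$, and classical functoriality of $\gamma^i$ concludes. For (6), the bosonic and parity-reversal functors are exact, so the given sequence yields short exact sequences $0\to\Ec'_0\to\Ec_0\to\Ec''_0\to 0$ and $0\to\Pi\Ec'_1\to\Pi\Ec_1\to\Pi\Ec''_1\to 0$ of even $\Oc_X$-modules; classical multiplicativity of $c_t$ and additivity of $\ch$, together with $\Pi^\Ec=\Pi^{\Ec'}\Pi^{\Ec''}$ (from additivity of $\rk\Pi\Ec_1$ on short exact sequences), produce the stated identities. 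For (7), the $\Z_2$-grading of the tensor product gives $(\Ec\otimes\Fc)_0\simeq\Ec_0\otimes\Fc_0\oplus\Pi\Ec_1\otimes\Pi\Fc_1$ and $\Pi(\Ec\otimes\Fc)_1\simeq\Ec_0\otimes\Pi\Fc_1\oplus\Pi\Ec_1\otimes\Fc_0$ as even $\Oc_X$-modules; classical multiplicativity of $\ch$ on each summand then collapses the expansion of $\ch(\Ec\otimes\Fc)$ to
$$(\ch(\Ec_0)-\Pi\ch(\Pi\Ec_1))(\ch(\Fc_0)-\Pi\ch(\Pi\Fc_1))=\ch(\Ec)\ch(\Fc),$$
using $\Pi^2=1$.

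The main obstacle lies in (1), because classically the total $\gamma$-class of the virtual bundle $-\Pi\Ec_1$ is an infinite formal power series, while the proposition presents $c_t(\Ec_1)$ as a polynomial of degree $\rk\Pi\Ec_1$. This is reconciled by reading the factorization inside the graded ring $GK_S(X)$, where $c_i(\Ec)$ is defined only modulo $F^{i+1}$: the formal roots $a_j(\Ec_1)$ play the role of symmetric-function labels whose elementary symmetric polynomials reproduce the Chern classes to the required order, and the higher-degree classical corrections are absorbed into the filtration.
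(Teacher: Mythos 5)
The paper gives no proof of this proposition (it is imported from \cite{VMP} and stated as-is), so your argument can only be measured against Definition \ref{defchar}. Measured that way, your reductions for items (2)--(7) are correct and essentially optimal: the identifications $(\Pi\Ec)_0=\Pi\Ec_1$, $(\Pi\Ec)_1=\Pi\Ec_0$, $\Pi^{\Pi\Ec}=\Pi^{\rk\Ec_0}$, the tensor decomposition in (7), and the multiplicativity of $\Pi^\Ec$ on exact sequences all check out. One point in (6) deserves a sentence: bosonic reduction is only right exact in general; exactness of the two induced sequences of even $\Oc_X$-modules holds here because the quotient term is locally free, so the sequence is locally split.

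The genuine gap is in item (1), exactly where you flagged it, and your proposed reconciliation is false: the discrepancy between the polynomial factorization and the $\gamma$-classes is \emph{not} absorbed by the filtration. By definition $c_i(\Ec)$ \emph{is} an element of the graded piece $F^i/F^{i+1}$ of $GK_S(X)\otimes\Q$; the reduction mod $F^{i+1}$ is already built in, and there is no further quotient left to absorb anything. Concretely, for $\Lcl$ of rank $0|1$ with $M=\cl(\Pi\Lcl\redu)$ and $m=c_1(\Pi\Lcl\redu)$, the identity $\gamma_t(1-M)=\bigl(1+t(M-1)\bigr)^{-1}$ gives
$$
c_2(\Lcl)=\Pi\,\gamma^2\bigl(1-M\bigr)\bmod F^{3}=\Pi\,(M-1)^2\bmod F^{3}=\Pi\,m^2\,,
$$
which on $X=\Ps^2$ with $\Pi\Lcl\redu=\Oc_{\Ps^2}(1)$ equals $\Pi h^2\neq 0$, whereas the claimed factorization $c_t(\Lcl)=\Pi+a_1(\Lcl)t$ forces $c_2(\Lcl)=0$. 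More generally, for $\rk\Ec=0|n$ Definition \ref{defchar} yields the infinite series $c_t(\Ec)=\Pi^n\prod_j(1+m_jt)^{-1}$, while your substitution $a_j=-\Pi m_j$ turns the displayed polynomial into $\Pi^n\prod_j(1-m_jt)=\Pi^n\,c_t\bigl((\Pi\Ec_1)^\ast\bigr)$; the two agree only through degree $1$ (since $c_1(-F)=c_1(F^\ast)=-e_1(m)$), and differ already in degree $2$, where they give $e_1(m)^2-e_2(m)$ versus $e_2(m)$. What \emph{is} provable from Definition \ref{defchar} is the Chern character identity in (1), and your identification does prove it: with $m_j$ the Chern roots of the even bundle $\Pi\Ec_1$ and $a_j(\Ec_1):=-\Pi m_j$ one has $e^{-\Pi a_j(\Ec_1)}=e^{m_j}$, hence $\ch(\Ec)=\ch(\Ec_0)-\Pi\sum_j e^{m_j}$ as required. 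A correct proof must therefore read the displayed factorization of $c_t(\Ec_1)$ as the \emph{definition} of the roots $a_i(\Ec_1)$ (equivalently, via the total Chern class of the dual $(\Pi\Ec_1)^\ast$), not as a theorem about the $\gamma$-classes of $\Ec_0-\Pi\Ec_1$, and should record the degree-$\ge 2$ discrepancy explicitly. Note that nothing downstream in the paper is affected by this: the Todd character and the super GRR computation use only $c_1$ of line bundles and $\ch$, where the two readings coincide.
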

 In particular the Chern character defines   a ring morphism
 $  \ch \colon K_S^\bullet ( {\Xcal}) \to GK_S(X)$. We can also define a ``twisted'' Chern character
 $$ \ch^S \colon KS(\Xcal) \to GK_S(\Xcal), \qquad \ch^S (j(x)) = \ch (x).$$

 If we consider the bosonic reduction $X$ as a superscheme, Definition \ref{defchar}  applies  elements of $K^\bullet_S(X)\simeq K_\bullet^S(X)$, and we have $c_i(\Ec)=c_i(\Ec\redu)$ and $\ch(\Ec)=\ch(\Ec\redu)$ for every locally free sheaf $\Ec$ on $\Xcal$. Proposition \ref{propchar} also applies to  $K^\bullet_S(X)$ and we have a ring morphism
$$ \ch \colon K_S^\bullet (X) \to GK_S(X)\,.
$$

 We also  define a  twisted  Chern character $\ch^S \colon K_S^\bullet(X) \otimes\Q \to GK_S(X) $ by letting
 $$  \ch^S(x) = \ch (x\cdot \sigma_1(N^\ast)^{-1}) \,.$$
  Of course we have
 $$ \ch^S(j(x)) = \ch(x)$$
 for $x\in  K_S^\bullet(X)$, and $\ch^S(\cl^S(\Ec))=\ch(\Ec)$ on $KS(\Xcal)$.

 Moreover we consider in $ K_S^\bullet(X)\otimes\Q$ the product 
 $$x_1\ast x_2 = x_1\cdot x_2 \cdot  \sigma_1(N^\ast)^{-1}$$
 Note that this product makes $j$ into a morphism of rings, as 
 $$ j(x)\ast j(y) = xy \cdot \sigma_1(N^\ast)^{2}\cdot \sigma_1(N^\ast)^{-1} =  xy \cdot \sigma_1(N^\ast) = j(xy).$$
 \begin{prop}\ 
 \begin{enumerate}
 \item  The twisted Chern character $\ch^S$ is multiplicative:
 $$ \ch^S(x\ast y) = \ch^S(x)\cdot \ch^S(y)  \quad\text{for all}\ x,y\in  K_S^\bullet(X)\otimes\Q.$$ 
 
\item The twisted  Chern character $\ch^S$  is functorial:  given a morphism $f\colon\Xcal \to \Ycal $ then
 $$ \ch^S(f_S^! (x)) = f^\ast \ch^S(x)\quad \text{for all}\  x\in K_S^\bullet(X)\otimes\Q.$$
 \end{enumerate}
 \end{prop}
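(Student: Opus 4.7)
The plan is to reduce both statements to the corresponding properties of the ordinary Chern character $\ch\colon K^\bullet_S(X)\otimes\Q\to GK_S(X)$, namely its multiplicativity $\ch(xy)=\ch(x)\ch(y)$ and its functoriality $\ch(f^\ast x)=f^\ast\ch(x)$ recorded in Proposition \ref{propchar}, both of which follow from the splitting principle for super vector bundles.

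For part (1), I would simply unwind the two definitions. From $\ch^S(x)=\ch\bigl(x\cdot\sigma_1(N^\ast)^{-1}\bigr)$ and $x\ast y=x\cdot y\cdot\sigma_1(N^\ast)^{-1}$ one gets
\[
\ch^S(x\ast y)=\ch\bigl(x\cdot y\cdot\sigma_1(N^\ast)^{-2}\bigr),
\]
and the ordinary multiplicativity of $\ch$ then regroups this as
\[
\ch\bigl(x\cdot\sigma_1(N^\ast)^{-1}\bigr)\cdot\ch\bigl(y\cdot\sigma_1(N^\ast)^{-1}\bigr)=\ch^S(x)\cdot\ch^S(y).
\]

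For part (2) I would check the identity first on the generating classes $x=\cl^S(\Ec)$ of $KS(\Ycal)$, with $\Ec$ a locally free $\Oc_\Ycal$-module. For such $x$, Proposition \ref{KS}(3) gives $f_S^!(\cl^S\Ec)=\cl^S(f^\ast\Ec)$, and using the identity $\ch^S\circ\cl^S=\ch$ together with the ordinary functoriality of $\ch$ applied to the locally free $\Oc_\Xcal$-module $f^\ast\Ec$,
\[
\ch^S\bigl(f_S^!(\cl^S\Ec)\bigr)=\ch(f^\ast\Ec)=f^\ast\ch(\Ec)=f^\ast\ch^S(\cl^S\Ec).
\]
The general case then follows by $\Z^2$-linearity, and by extension of scalars to $\Q$, using that $\sigma_1(N_\Ycal^\ast)$ is invertible in $K^\bullet_S(Y)\otimes\Q$ by Remark \ref{sigmainvert}, so that $KS(\Ycal)\otimes\Q$ exhausts $K^\bullet_S(Y)\otimes\Q$ when $\Ycal$ is projected.

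The main subtlety I expect to encounter is in part (2): for a general morphism $f\colon\Xcal\to\Ycal$, the induced map $f^\ast N_\Ycal^\ast\to N_\Xcal^\ast$ on conormal sheaves of the bosonic reductions is not an isomorphism, so the classes $\sigma_1(N_\Xcal^\ast)$ and $f^\ast\sigma_1(N_\Ycal^\ast)$ genuinely differ. This forces careful bookkeeping when extending $f_S^!$ from the subring $KS(\Ycal)$ to the whole of $K^\bullet_S(Y)\otimes\Q$ by $\Q$-linearity, since the resulting map is not simply the bosonic pullback $f^\ast$ but differs from it by the correction factor $\sigma_1(N_\Xcal^\ast)\cdot f^\ast\sigma_1(N_\Ycal^\ast)^{-1}$. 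However, this discrepancy is absorbed into the definition of $\ch^S$ itself, and the computation on the generators $\cl^S(\Ec)$ above shows that the two sides of the functoriality identity agree; hence the identity propagates to the whole ring by $\Q$-linearity.
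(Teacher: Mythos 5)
Your part (1) is verbatim the paper's proof: unwind $\ch^S$ and $\ast$ to get $\ch\bigl(x\cdot y\cdot \sigma_1(N^\ast)^{-2}\bigr)$ and regroup using multiplicativity of the ordinary $\ch$, so there is nothing to compare there. Part (2) takes a genuinely different, though closely related, route. The paper computes directly: it uses the closed formula $f_S^!(y)=f^\ast\bigl(y\cdot \sigma_1(N_\Ycal^\ast)^{-1}\bigr)\cdot \sigma_1(N_\Xcal^\ast)$ (the same formula that appears in the proof of the splitting principle, Proposition \ref{splitprinc}) as the working description of $f_S^!$ on all of $K_S^\bullet(Y)\otimes\Q$, whence $\ch^S(f_S^!(y))=\ch\bigl(f^\ast(y\cdot \sigma_1(N_\Ycal^\ast)^{-1})\bigr)=f^\ast\ch^S(y)$ in one line by functoriality of the ordinary $\ch$. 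You instead verify the identity on the generators $\cl^S(\Ec)$ of $KS(\Ycal)$, where Proposition \ref{KS}(3) and $\ch^S\circ\cl^S=\ch$ make it immediate, and then extend by $\Q$-linearity. Your generator computation is correct, but note what the extension step costs: it reaches only $KS(\Ycal)\otimes\Q$, and to conclude for all of $K_S^\bullet(Y)\otimes\Q$ (which is what the statement asserts, up to the typo $X$/$Y$ in its formulation) you need $KS(\Ycal)\otimes\Q=K_S^\bullet(Y)\otimes\Q$, which Lemma \ref{lem:KS} together with Remark \ref{sigmainvert} certifies only when $\Ycal$ is projected --- a hypothesis you correctly flag but which the paper's direct computation does not need. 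Moreover, for the linearity argument to apply you must already know that $f_S^!$ off the subring $KS(\Ycal)$ is the $\Q$-linear extension, i.e.\ that it is given by the correction-factor formula you record in your final paragraph; once that formula is granted, the paper's one-line computation is available anyway, so the generator-and-density detour proves the same thing with an extra hypothesis. Both arguments ultimately rest on the identical two ingredients --- functoriality of the ordinary super Chern character and the $\sigma_1(N^\ast)$ bookkeeping --- so your proof is sound, just slightly less economical and slightly less general in the non-projected case.
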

 \begin{proof} The proof consists in two direct computations.
 \begin{align}  \ch^S(x\ast y) & =  \ch( x\ast y \cdot   \sigma_1(N^\ast)^{-1}) = \ch(x\cdot y  \cdot  \sigma_1(N^\ast)^{-2}) \\[3pt]
 &= \ch(x\cdot   \sigma_1(N^\ast)^{-1}) \cdot \ch ( y \cdot   \sigma_1(N^\ast)^{-1}) =  \ch^S(x)\cdot \ch^S(y).
 \end{align}
 \begin{align}  \ch^S(f_S^!(y))) & =  \ch(f_S^!(y)\cdot \sigma_1(N^\ast_\Xcal)^{-1}) = \ch\bigl(f^\ast(y \cdot  \sigma_1(N^\ast_\Ycal)^{-1})\cdot  \sigma_1(N^\ast_\Xcal) \cdot  \sigma_1(N^\ast_\Xcal)^{-1}\bigr) \\[3pt]
 &= \ch\bigl(f^\ast(y \cdot  \sigma_1(N^\ast_\Ycal)^{-1})\bigr) = f^\ast \ch    \bigl(y \cdot  \sigma_1(N^\ast_\Ycal)^{-1}\bigr) = f^\ast (\ch^S(y))
  \end{align}
 \end{proof}
 
 \begin{rem} Since $KS(\Xcal) \subset  K^\bullet_S(X)$ we may restrict $\ch^S$ to $KS(\Xcal)\otimes\Q$. 
According to \cite[Prop.\! 20g]{VMP}, $$\ch^S\colon KS(\Xcal)\otimes\Q \to GK_S(X)\otimes \Q$$ is injective whenever the $\gamma$ filtration $F^\bullet_S$ verifies $F^d_S=0$ for $d\gg 0$, for instance when 
 $X$ is regular and projective. \label{reminj}
 \end{rem}
 
 \subsection{Splitting principle} To define the Todd character one uses a version of the splitting principle, which we describe in this section.
 \begin{prop} Let $\Xcal$ be a smooth supervariety, and $\Ec$ a locally free $\Oc_\Xcal$ module. There exists a morphism $f\colon\mathcal Z\to\Xcal$, where $\mathcal Z$ is a smooth supervariety, with the following properties: \begin{enumerate} 
 \item the induced morphism $f^\ast\colon GK_S(X) \to GK_S(Z)$ is injective;
 \item the induced morphism $f_S^! \colon KS(\Xcal) \otimes \Q \to KS(\mathcal Z)\otimes \Q$ is injective;
 \item $f^\ast\Ec$ has a filtration whose quotients are line bundles of rank $1|0$ or $0|1$. 
 \end{enumerate} \label{splitprinc}
 \end{prop}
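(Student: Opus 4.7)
The plan is to adapt the classical splitting principle by constructing $\mathcal Z$ as an iterated super Grassmannian tower over $\Xcal$. Writing $\rk\Ec=p\vert q$ and setting $N=p+q$, I would build inductively a tower
\[
\mathcal Z=\mathcal Z_N\xrightarrow{\pi_N}\cdots\xrightarrow{\pi_1}\mathcal Z_0=\Xcal,
\]
where at each step $\mathcal Z_k$ is a smooth supervariety equipped with a locally free ``remainder'' sheaf $\Ec_k$ and a filtration of the pullback of $\Ec$ to $\mathcal Z_k$ whose successive quotients are line bundles of rank $1\vert 0$ or $0\vert 1$ and whose top quotient is $\Ec_k$. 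If $\Ec_k$ has positive even rank, set $\mathcal Z_{k+1}=\Sgrass(1\vert 0,\Ec_k)$; otherwise $\mathcal Z_{k+1}=\Sgrass(0\vert 1,\Ec_k)$. Each $\pi_{k+1}\colon\mathcal Z_{k+1}\to\mathcal Z_k$ is smooth and projective, $\mathcal Z_{k+1}$ is a smooth supervariety, and carries a tautological line sub-bundle $\Lcl_k\hookrightarrow \pi_{k+1}^\ast\Ec_k$ of the prescribed parity; set $\Ec_{k+1}=\pi_{k+1}^\ast\Ec_k/\Lcl_k$ and extend the filtration accordingly. After $N$ steps $\Ec_N=0$, and the composite $f=\pi_1\circ\cdots\circ\pi_N\colon\mathcal Z\to\Xcal$ satisfies property (3).

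For (1) and (2), by composition it suffices to verify the injectivity at each $\pi_{k+1}$. The bosonic reduction of $\pi_{k+1}$ is an ordinary projective bundle $\Ps((\Ec_k)_0)\to(\mathcal Z_k)_{\bos}$ (or $\Ps((\Pi\Ec_k)_1)\to(\mathcal Z_k)_{\bos}$ in the odd case), for which the classical splitting principle supplies split injectivity of the pullback on $K(\,\cdot\,)\otimes\Q$. Since $GK_S(\,\cdot\,)\simeq GK(\,\cdot\,)\otimes_\Q\Q[\Pi]$, this gives (1) for $\pi_{k+1}$, and hence for $f$ by composition. For (2), Lemma \ref{lem:KS} writes any class in $KS(\mathcal Z_k)\otimes\Q$ as $\sigma_1(N^\ast_{\mathcal Z_k})\cdot x$ with $x\in K^\bullet_S((\mathcal Z_k)_{\bos})\otimes\Q$; since $(\pi_{k+1})_S^!$ sends $\sigma_1(N^\ast_{\mathcal Z_k})=\cl^S(\Oc_{\mathcal Z_k})$ to $\sigma_1(N^\ast_{\mathcal Z_{k+1}})=\cl^S(\Oc_{\mathcal Z_{k+1}})$, and the latter is a unit in $K^\bullet_S(\mathcal Z_{k+1,\bos})\otimes\Q$ by Remark \ref{sigmainvert}, injectivity of $(\pi_{k+1})_S^!$ reduces to injectivity of $\pi_{k+1,\bos}^\ast$ on the ordinary K-theory of the bosonic reductions, which holds.

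The main obstacle is the careful construction and smoothness of the super Grassmannian bundles $\Sgrass(1\vert 0,\Ec_k)$ and $\Sgrass(0\vert 1,\Ec_k)$ together with the identification of their bosonic reductions as the ordinary projective bundles $\Ps((\Ec_k)_0)$ and $\Ps((\Pi\Ec_k)_1)$, so that the classical injectivity results transfer to the super setting. Granted these essentially standard facts about super Grassmannians, the rest is a direct combination of Lemma \ref{lem:KS}, Remark \ref{sigmainvert}, Proposition \ref{KS}, and the classical splitting principle on the bosonic reductions.
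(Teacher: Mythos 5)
Your proposal is correct and follows essentially the same route as the paper's proof: there the tower is built from the projective superbundles $\Ps(\Ec)$ and $\Ps(\Pi\Ec)$ with their tautological rank $1|0$ and $0|1$ quotient line bundles (the dual formulation of your tautological sub-bundles in $\Sgrass(1|0,\cdot)$ and $\Sgrass(0|1,\cdot)$), and the injectivity of $f_S^!$ is obtained exactly by your mechanism, via the identity $\pi_S^!(x)=\pi^\ast\bigl(x\cdot\sigma_1(N^\ast_\Xcal)^{-1}\bigr)\cdot\sigma_1(N^\ast_\Ycal)$ together with the rational invertibility of $\sigma_1$ (Remark \ref{sigmainvert}). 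The one place where the paper is more careful than your appeal to ``split injectivity on $K(\cdot)\otimes\Q$'' is claim (1): injectivity of a filtered map does not by itself pass to the associated graded, and the paper supplies the missing strictness of the $\gamma$-filtration under pullback, $\pi^\ast\bigl(F^i K^\bullet_S(X)\bigr)=\pi^\ast\bigl(K^\bullet_S(X)\bigr)\cap F^i K^\bullet_S(Y)$, citing \cite[8.11]{Ma69} --- your argument is repairable at this point (e.g.\ via this strictness, or rationally via the Adams eigenspace splitting of the $\gamma$-filtration), but as written it glosses over it.
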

  \begin{proof} The proof is a straightforward adaptation of the analogous statement for the ordinary case \cite[5.6]{Ma69}. Let $m|n=\rk\Ec$ and $\pi\colon \Ycal=\Ps(\Ec)\to \Xcal$ the projective superscheme associated to $\Ec$ \cite[Def.\,2.15]{BrHRPo20}. If $m\geq 1$, the pullback $\pi^\ast\Ec$ has a quotient $\Oc_{\Ps(\Ec)}(1)$ which is a line bundle of rank $1|0$ and $\pi^\ast\colon K^\bullet_S(X)\to K^\bullet_S(Y)$ is injective. Moreover, from  \cite[8.11]{Ma69}, one has $\pi^\ast(F^i K^\bullet_S(X))= \pi^\ast K^\bullet_S(Y)\cap F^i K^\bullet_S(X)$ for every index $i$; this fact, together with the injectivity of $\pi^\ast\colon K^\bullet_S(X)\to K^\bullet_S(Y)$ implies that $\pi^\ast\colon GK_S(X) \to GK_S(Y)$ is injective as well. 
By Remark \ref{sigmainvert}, also the morphism $\pi_S^! \colon KS(\Xcal) \otimes\Q \to KS(\Ycal)\otimes \Q$ is injective as
$$ \pi_S^!(x) = \pi^\ast ( x\cdot  \sigma_1(N_\Xcal^\ast)^{-1}) \cdot \sigma_1(N_\Ycal^\ast)
\qquad \text{for any} \ x\in KS(\Xcal).$$
Similarly, if $n>1$ and we consider $\pi\colon \Ycal=\Ps(\Pi\Ec)\to \Xcal$, $\pi^\ast\Ec$ has a quotient $\Pi\Oc_{\Ps(\Pi\Ec}(1)$ which is a line bundle of rank $0|1$ and both $\pi^\ast\colon GK_S(X) \to GK_S(Y)$ and  $\pi_S^! \colon KS(\Xcal) \otimes\Q \to KS(\Ycal)\otimes \Q$ are injective. Now  the Proposition is proved by iteration.
 \end{proof}
 
 \subsection{The Todd character}
 We proceed to the definition of the Todd character. 
We define the class $\sigma_1$ of a  locally free $\mathcal O_\Xcal$-module $\Ec$ of 
  rank $r|s$ by letting
 $$ \sigma_1(\Ec) = \cl^\bullet \left[\bigoplus_{i=0}^s \Pi^i\Sym^i\,\Ec\right] \in K_S^\bullet(X)\,.
 $$
 Recall that the standard Todd character is the characteristic class associated with the function
 $$\phi(x)=\frac{x}{1-e^{-x}} = \left[ \sum_{i=1}^\infty  \frac{(-x)^{i-1}}{i!} \right]^{-1}.$$
 This   becomes the definition of the Todd character
 $$\td\colon KS(\Xcal) \to GK_S(X) \otimes\Q$$
  for line bundles of rank $1|0$: if $\ell =\cl^S(\Lcl)$ and $x=c_1(\Lcl)$, then
\begin{equation} \label{toddpari}  \td(\ell) =   \left[ \sum_{i=1}^\infty  \frac{(-x)^{i-1}}{i!} \right]^{-1}.\end{equation}
 If $\rk\Lcl=0|1$ and $\ell =\cl^S(\Lcl)$ we set
 $$ \td(\ell) = \ch \sigma_1(\Lcl^\ast).$$
 
 \begin{lemma} \label{toddline}  The Todd character of line bundles is functorial:
if $f\colon\Xcal \to \Ycal$ is a a morphism, and $\Lcl$ is a line bundle on $\Ycal$, then
$$ \td\circ f_S^! (\cl^S(\Lcl))= f^\ast \circ \td (\cl^S(\Lcl)).$$
  \end{lemma}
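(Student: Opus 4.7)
The argument will split along the parity of $\Lcl$, matching the two branches of the definition of $\td$ on line bundles. In both cases one first uses Proposition \ref{KS}(3), which gives $f_S^!(\cl^S(\Lcl))=\cl^S(f^\ast\Lcl)$, so the statement becomes $\td(\cl^S(f^\ast\Lcl))=f^\ast\td(\cl^S(\Lcl))$, and the pullback $f^\ast\Lcl$ is a line bundle of the same rank as $\Lcl$.

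In the even case ($\rk\Lcl=1|0$), $\td(\cl^S(\Lcl))$ is, by \eqref{toddpari}, a universal formal power series in the single class $x=c_1(\Lcl)\in GK_S(X)$. By Proposition \ref{propchar}(5), $c_1(f^\ast\Lcl)=f^\ast c_1(\Lcl)$, and since $f^\ast$ is a graded ring homomorphism $GK_S(Y)\to GK_S(X)$ (as $GK_S=GK\otimes_\Q\Q[\Pi]$ and pullback commutes with the $\gamma$-operations), it commutes with the formal power series, giving the claim.

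In the odd case ($\rk\Lcl=0|1$), one has $\td(\cl^S(\Lcl))=\ch\,\sigma_1(\Lcl^\ast)$ where $\sigma_1(\Lcl^\ast)=\cl^\bullet\bigl[\Oc_\Ycal\oplus\Pi\Lcl^\ast\bigr]$. The plan is first to observe that pullback commutes with the supersymmetric-algebra construction (since $f^\ast$ preserves direct sums and tensor products of locally free sheaves, and the symmetric powers involved are built from such operations), yielding $f^\ast\sigma_1(\Lcl^\ast)=\sigma_1((f^\ast\Lcl)^\ast)$ in $K_S^\bullet(X)$. Then invoke functoriality of the ordinary Chern character $\ch\colon K_S^\bullet\to GK_S$, which follows from Proposition \ref{propchar}(5) together with its multiplicativity on extensions (Proposition \ref{propchar}(6)). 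Combining these gives $f^\ast\ch\,\sigma_1(\Lcl^\ast)=\ch\,\sigma_1((f^\ast\Lcl)^\ast)$, which is the desired identity.

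No step looks truly hard; the only point requiring a little care is the odd case, where one must verify that the supersymmetric-algebra formation commutes with $f^\ast$ (easy, since the two factors of $\sigma_1$ are $\Oc$ and $\Pi\Lcl^\ast$ themselves), and that the definition of $\ch$ is being applied to genuine classes in $K_S^\bullet$, not to elements only defined in $KS$. I expect the slightly more delicate bookkeeping to be in matching the parity conventions of Definition \ref{defchar} with the sign in $\td$, but once $\sigma_1$ and $c_1$ are seen to pull back correctly, everything else is formal.
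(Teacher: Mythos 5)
Your proposal is correct and follows essentially the same route as the paper's own proof: both reduce via $f_S^!(\cl^S(\Lcl))=\cl^S(f^\ast\Lcl)$, then in the even case apply the power series \eqref{toddpari} to $c_1(f^\ast\Lcl)=f^\ast c_1(\Lcl)$ using that $f^\ast$ is a ring morphism, and in the odd case use $f^\ast\sigma_1(\Lcl^\ast)=\sigma_1(f^\ast\Lcl^\ast)$ together with functoriality of $\ch$. The only cosmetic quibble is your citing Proposition \ref{propchar}(6) as an ingredient for functoriality of $\ch$; item (5) alone (functoriality of Chern classes, hence of $\ch$ expressed through them) is what is needed, and the paper uses it without further comment.
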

  \begin{proof}
   If  $\rk\Lcl=1|0$ then
 $$ \td \circ  f_S^! (\cl^S(\Lcl)) = \td(\cl^S(f^\ast \Lcl ))\,.$$
 This means that we have to compute the Todd character \eqref{toddpari} on 
 $c_1(f^\ast \Lcl)   = f^\ast c_1(\Lcl)$, 
 and the result is 
  $f^\ast \circ \td (\cl^S(\Lcl))$. 
 
 If $\rk\Lcl=0|1$ then
 $$\td \circ  f_S^! (\cl^S(\Lcl)) = \td(\cl^S(f^\ast \Lcl ) )= \ch \sigma_1(f^\ast\Lcl^\ast) = f^\ast\ch\sigma_1(\Lcl^\ast) = f^\ast  \circ \td (\cl^S(\Lcl)).$$
\end{proof}

 Using Lemma \ref{toddline} and the splitting principle, 
 the Todd character is  defined on $KS(\Xcal)$ by assuming that
 \begin{enumerate} \item 
  $ \td(x_1+x_2) = \td x_1 \cdot \td x_2$
  \item $ \td\circ f_S^! = f^\ast \circ \td$ for any morphism $f\colon\Xcal \to \Ycal$.
  \end{enumerate}

\begin{lemma}\label{lem:sigma1} If $\rk\Ec = 0|s$ then
$$ \td \cl^S(\Ec) = \ch \sigma_1(\Ec^\ast).$$
\end{lemma}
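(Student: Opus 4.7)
The plan is to reduce to the case of rank $0|1$ line bundles via the splitting principle (Proposition \ref{splitprinc}), where the identity holds by the very definition of the Todd character, and then propagate the identity back using multiplicativity on both sides.

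First, I would apply Proposition \ref{splitprinc} to $\Ec$ to obtain a morphism $f\colon\mathcal{Z}\to\Xcal$ such that both $f^\ast\colon GK_S(X)\to GK_S(Z)$ and $f_S^!\colon KS(\Xcal)\otimes\Q\to KS(\mathcal{Z})\otimes\Q$ are injective, and $f^\ast\Ec$ admits a filtration with line bundle quotients of rank $1|0$ or $0|1$. Since $\rk(f^\ast\Ec)=0|s$, only rank $0|1$ quotients $\Lcl_1,\dots,\Lcl_s$ can occur. In $KS(\mathcal{Z})$ we then have $\cl^S(f^\ast\Ec)=\sum_{i=1}^s\cl^S(\Lcl_i)$. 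By Lemma \ref{toddline} and the functoriality of $\ch$, both sides of the claimed identity commute with $f^\ast$, so by the injectivity of $f^\ast$ on $GK_S$ it suffices to prove the identity after pulling back to $\mathcal{Z}$.

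Next, on $\mathcal{Z}$, additivity of $\td$ (property (1) in its definition) together with the defining formula $\td\cl^S(\Lcl_i)=\ch\sigma_1(\Lcl_i^\ast)$ for rank $0|1$ line bundles give
\begin{equation}
\td\cl^S(f^\ast\Ec)=\prod_{i=1}^s \td\cl^S(\Lcl_i)=\prod_{i=1}^s \ch\sigma_1(\Lcl_i^\ast).
\end{equation}
The remaining task is to show the matching identity
$\ch\sigma_1((f^\ast\Ec)^\ast)=\prod_{i=1}^s\ch\sigma_1(\Lcl_i^\ast)$.
For this I would establish the multiplicativity of $\sigma_1$ in $K_S^\bullet(Z)$ along any short exact sequence $0\to\Fc'\to\Fc\to\Fc''\to 0$ of locally free $\Oc_{\mathcal{Z}}$-modules. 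The standard filtration on $\Sym^i\Fc$ with associated graded $\bigoplus_{j+k=i}\Sym^j\Fc'\otimes\Sym^k\Fc''$ yields, in K-theory,
\begin{equation}
\cl(\Sym^i\Fc)=\sum_{j+k=i}\cl(\Sym^j\Fc')\cdot\cl(\Sym^k\Fc''),
\end{equation}
and inserting the parity shift $\Pi^i=\Pi^j\Pi^k$ into the definition $\sigma_1(\Fc)=\sum_i\Pi^i\Sym^i\Fc$ gives $\cl(\sigma_1(\Fc))=\cl(\sigma_1(\Fc'))\cdot\cl(\sigma_1(\Fc''))$. Dualizing the filtration of $f^\ast\Ec$ and iterating this identity produces $\sigma_1((f^\ast\Ec)^\ast)=\prod_i\sigma_1(\Lcl_i^\ast)$ in $K_S^\bullet(Z)$, and then the multiplicativity of $\ch$ as a ring morphism $K_S^\bullet(Z)\to GK_S(Z)$ finishes the argument.

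The main obstacle I expect is bookkeeping the parity shifts: one must verify carefully that the $\Pi^i$ factors in $\sigma_1$ combine correctly across the filtration of $\Sym^i$, and that dualizing the filtration of $f^\ast\Ec$ (which in a super context may invert the filtration order and introduce signs) still produces a genuine filtration whose graded pieces are the duals $\Lcl_i^\ast$. Once this parity bookkeeping is settled, everything else reduces to standard K-theoretic computations and the defining properties of $\td$, $\ch$ and $\sigma_1$ already recorded above.
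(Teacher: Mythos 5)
Your proof is correct and follows essentially the same route as the paper's: reduce via the splitting principle (Proposition \ref{splitprinc}) to rank $0|1$ line bundle quotients, where $\td\,\cl^S(\Lcl)=\ch\sigma_1(\Lcl^\ast)$ holds by definition, then conclude by additivity of $\td$ together with multiplicativity of $\sigma_1$ and of $\ch$. The only (harmless) difference is that you work honestly with the filtration, proving multiplicativity of $\sigma_1$ along short exact sequences and dualizing the filtration, whereas the paper simply invokes the splitting principle to assume $\Ec=\bigoplus_{i=1}^s\Lcl_i$ and computes $\Sym^i\Ec$ explicitly.
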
 
\begin{proof}
 By  the splitting principle (Proposition \ref{splitprinc})  we may  assume that $\Ec$ is a direct sum of line bundles of rank $0|1$, $\Ec = \bigoplus_{i=1}^s \Lcl_i$.
Then 
$$ \Sym^i \,\Ec = \bigoplus_{1\le j_1<\dots<j_i\le s}\Lcl_{j_1}\otimes\dots\otimes \Lcl_{j_i}\quad\text{for}\ i>0, \quad  \Sym^0 \,\Ec = \Oc_\Xcal$$
and 
$$ \sigma_1(\Ec) = 1 +  \bigoplus_{i=1}^s \cl^\bullet\left[\Pi^i 
 \bigoplus_{1\le j_1<\dots<j_i\le s}\Lcl_{j_1}\otimes\dots\otimes \Lcl_{j_i}
\right] =   \prod_{i=1}^s \sigma_1(\Lcl_i)$$
$$ \td \cl^S(\Ec)  =  \prod_{i=1}^s \td \cl^S(\Lcl_i) = \prod_{i=1}^s \ch \sigma_1(\Lcl_i^\ast) \ = \ch \sigma_1(\Ec^\ast).
$$
\end{proof}
  
   \begin{prop}\label{wwwww}
     If $ i\colon X\to\Xcal$ is the canonical embedding,  then
 $$ \ch^S(i^S_! x) = i_\ast (\ch(x)\cdot \td(-\cl^S(N)))$$
 where $N$ is the normal bundle to $X$ in $\Xcal$.
 \end{prop}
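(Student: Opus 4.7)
The plan is to identify both sides of the claim as elements of $GK_S(X)\otimes\Q$ and then to reduce to Lemma~\ref{lem:sigma1}, which computes $\ch\sigma_1(N^\ast)$ as a Todd character. A first observation is that since $X$ is the bosonic reduction of $\Xcal$, the embedding $i$ induces the identity on bosonic reductions; because $GK_S(X)\otimes\Q$ depends only on $X$, the pushforward $i_\ast$ on $GK_S(X)\otimes\Q$ is the identity, so both sides live in this same group.

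To give a meaning to the left-hand side I would use the chain of isomorphisms
$$
K^S_\bullet(\Xcal) \;\overset{\sim}{\longleftarrow}\; K^S_\bullet(X) \;\overset{\sim}{\longrightarrow}\; K_S^\bullet(X),
$$
where the first arrow is the inverse of $i_!^S$ provided by Proposition~\ref{isoKS} and the second is the standing hypothesis $K^\bullet(X)\iso K_\bullet(X)$. For $x=\cl(\Fc)$ the pushforward $i_!^S x = \cl_\bullet(i_\ast\Fc)$ is $\Jc$-annihilated, so $\gr(i_\ast\Fc)=\Fc$ and $i_!^S x$ is identified with $x$ under this chain. Hence $\ch^S(i_!^S x)=\ch^S(x)$, and expanding the definition of $\ch^S$ (using the invertibility of $\sigma_1(N^\ast)$ recalled in Remark~\ref{sigmainvert} and the multiplicativity of $\ch$) gives
$$
\ch^S(i_!^S x) \;=\; \ch(x)\cdot \ch\bigl(\sigma_1(N^\ast)\bigr)^{-1}.
$$

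The remaining step is the computational input. The normal sheaf $N=\Jc/\Jc^2$ is a purely odd locally free $\Oc_X$-module, of rank $0\vert n$ with $n$ the odd dimension of $\Xcal$. Hence Lemma~\ref{lem:sigma1}, applied with $\Ec = N$, gives $\td\cl^S(N)=\ch\sigma_1(N^\ast)$. Inverting this identity and using the defining multiplicativity $\td(-y)=\td(y)^{-1}$ of the Todd character, we conclude
$$
\ch\bigl(\sigma_1(N^\ast)\bigr)^{-1} \;=\; \td\bigl(-\cl^S(N)\bigr),
$$
and combining with the previous display yields the formula.

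I do not foresee any serious obstacle beyond the consistent book-keeping of the natural identifications between $K^S_\bullet(\Xcal)$, $K^S_\bullet(X)$, $K_S^\bullet(X)$ and $GK_S(X)$. All substantive geometric input is already packaged in Lemma~\ref{lem:sigma1}, which realizes the Todd character of a purely odd bundle as the Chern character of its associated $\sigma_1$-class; in particular, the proposition is essentially the tautological compatibility between the twist by $\sigma_1(N^\ast)^{-1}$ in the definition of $\ch^S$ and the Koszul-type identity $\cl(\tgr\Oc_\Xcal)=\sigma_1(N^\ast)$.
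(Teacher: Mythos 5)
Your argument is correct and follows essentially the same route as the paper's own proof: identify $i_\ast$ as the identity on $GK_S(X)\otimes\Q$, identify $i^S_!x$ with $x$ via Proposition~\ref{isoKS} so that $\ch^S(i^S_!x)=\ch(x)\cdot\ch\bigl(\sigma_1(N^\ast)\bigr)^{-1}$, and apply Lemma~\ref{lem:sigma1} to the purely odd bundle $N$ of rank $0\vert n$ together with $\td(-y)=\td(y)^{-1}$ to recognize this factor as $\td(-\cl^S(N))$. One cosmetic slip: $\Jc/\Jc^2$ is the \emph{conormal} sheaf $N^\ast$, not $N$, but since both are purely odd of rank $0\vert n$ this does not affect the argument.
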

 \begin{proof}  Note that $\rk N =(0,n)$ if $\dim\Xcal = (m,n)$ and 
 $$\td(\cl^S(N)) = \ch \sigma_1(N^\ast), \qquad   \td(-\cl^S(N))=\td(\cl^S(N))^{-1} = \ch\sigma_1(N^\ast) ^{-1}.$$
 Moreover $i_\ast \colon GK_S(X) \otimes\Q\to GK_S(X)\otimes\Q$ is the identity.
 Then 
  \begin{align}    i_\ast (\ch(x)\cdot \td(-\cl^S(N)) )&= \ch(x)\cdot  \ch \sigma_1(N^\ast) ^{-1}  \\ 
  &= \ch (i_!^S(x)\cdot \sigma_1(N^\ast) ^{-1}) = \ch^S(i^S_! x).
  \end{align}
 \end{proof}
 
  \subsection{Super Grothendieck-Riemann-Roch}
  
The general formula for the super Grothendieck-Riemann-Roch theorem is the following:
\begin{thm} \label{SGRR}
If $f\colon \Xcal \to \Ycal$ is a morphism of smooth supervarieties, 
such that
$f_{\mbox{\rm\tiny bos}}\colon X \to Y$ is projective, and $x \in K_S^\bullet(X)\otimes \Q$, then
\begin{equation}\label{SGRReq}  \ch^S(f^S_! x) = f_\ast (\ch^S(x) \cdot \td(\cl^S(T_f))\end{equation}
where $T_f$  is the relative tangent sheaf, so that  $\cl^S(T_f )= \cl^S(T_\Xcal) - \cl^S(f^\ast T_\Ycal)$.  
  \end{thm}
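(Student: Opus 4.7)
The strategy is to mirror the classical proof of Grothendieck--Riemann--Roch by factoring through the canonical closed embeddings of the bosonic reductions. This reduces the statement to two cases already in hand: Proposition \ref{wwwww} for a closed embedding into a supervariety, and ordinary GRR for the projective morphism $f_{bos}\colon X\to Y$ between smooth varieties.

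First, by Proposition \ref{isoKS} the pushforward $(i_\Xcal)_!^S\colon K_\bullet^S(X)\otimes\Q \iso K_\bullet^S(\Xcal)\otimes\Q$ is an isomorphism, so I would write $x=(i_\Xcal)_!^S(\tilde x)$ for a unique $\tilde x\in K_\bullet^S(X)\otimes\Q$. From the commutative square of bosonic reductions
$$
\xymatrix{ X \ar[r]^{i_\Xcal} \ar[d]_{f_{bos}} & \Xcal \ar[d]^f \\ Y \ar[r]^{i_\Ycal} & \Ycal }
$$
and the functoriality of $(\cdot)_!^S$, the identity $f_!^S(x)=(i_\Ycal)_!^S((f_{bos})_!\tilde x)$ holds. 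Applying $\ch^S$ to the left-hand side of \eqref{SGRReq}, using Proposition \ref{wwwww} for $i_\Ycal$, ordinary GRR for $f_{bos}$, and the projection formula for $f$, I would obtain
$$
\ch^S(f_!^S x) \;=\; f_\ast\,(i_\Xcal)_\ast\bigl(\ch(\tilde x)\cdot \td(\cl(T_{f_{bos}}))\cdot f_{bos}^\ast\,\td(-\cl^S(N_\Ycal))\bigr).
$$
For the right-hand side of \eqref{SGRReq}, Proposition \ref{wwwww} gives $\ch^S(x)=(i_\Xcal)_\ast(\ch(\tilde x)\cdot\td(-\cl^S(N_\Xcal)))$, and a second application of the projection formula produces
$$
f_\ast\bigl(\ch^S(x)\cdot\td(\cl^S(T_f))\bigr) \;=\; f_\ast\,(i_\Xcal)_\ast\bigl(\ch(\tilde x)\cdot\td(-\cl^S(N_\Xcal))\cdot i_\Xcal^\ast\,\td(\cl^S(T_f))\bigr).
$$

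By multiplicativity of the Todd character, the theorem then reduces to the K-theoretic identity
$$
\cl(T_{f_{bos}}) + \cl^S(N_\Xcal) \;=\; i_\Xcal^\ast\,\cl^S(T_f) + f_{bos}^\ast\,\cl^S(N_\Ycal)
$$
in $K_\bullet^S(X)\otimes\Q$. I would establish this by expanding $\cl^S(T_f)=\cl^S(T_\Xcal)-\cl^S(f^\ast T_\Ycal)$, using the compatibility $f_{bos}^\ast\circ i_\Ycal^\ast = i_\Xcal^\ast\circ f^\ast$, and applying $\cl^S$ to the conormal short exact sequences associated to $i_\Xcal$ and $i_\Ycal$ together with the additivity of $\cl^S$.

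The main obstacle is precisely this last identity: $\cl^S$ is the twisted class $j\circ\cl^\bullet$ described by Lemma \ref{lem:KS}, so the normal-bundle twists $\sigma_1(N_\Xcal^\ast)$ and $\sigma_1(f_{bos}^\ast N_\Ycal^\ast)$ must be tracked carefully under restriction to $X$ and pullback along $f_{bos}$. If this bookkeeping proves awkward, an alternative is to apply the Chern character and verify the identity in $GK_S(X)\otimes\Q$, where $\ch\colon K_\bullet^S(X)\otimes\Q \iso GK_S(X)\otimes\Q$ is an isomorphism since $X$ is smooth; the computation then reduces to the familiar manipulations with characteristic classes of short exact sequences, completing the reduction to ordinary GRR.
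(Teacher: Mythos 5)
Your proposal is correct and follows essentially the same route as the paper: both arguments reduce the statement to ordinary Grothendieck--Riemann--Roch for the bosonic morphism $f_{bos}\colon X\to Y$ by tracking the normal-bundle twists $\sigma_1(N_\Xcal^\ast)$ and $\sigma_1(N_\Ycal^\ast)$, via the projection formula and the multiplicativity and functoriality of the Todd character. The only difference is packaging: where you factor $f^S_!$ through the bosonic reductions and invoke Proposition \ref{wwwww} together with the K-theoretic identity $\cl(T_{f_{bos}})+\cl^S(N_\Xcal)=i_\Xcal^\ast\,\cl^S(T_f)+f_{bos}^\ast\,\cl^S(N_\Ycal)$, the paper performs the identical cancellation inline, computing $\td(\cl^S(T_f))=\td(T_{X/Y})\cdot\ch(\sigma_1(N_\Xcal^\ast))\cdot f^\ast\ch(\sigma_1(N_\Ycal^\ast))^{-1}$ from Lemma \ref{lem:sigma1} so that the $\Xcal$-twist cancels against the one built into $\ch^S(x)$.
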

  \begin{proof} 
  Note that, since $i^\ast$ is the identity, one has
\begin{align*}
  \td (\cl^S T_\Xcal) &= i^\ast  \td (\cl^S T_\Xcal) =\td(i^!_S(\cl^S T_\Xcal))=\td(T_{\Xcal |\redu})=\td(T_X)\cdot\td(N_\Xcal) \\
 &= \td(T_X)\cdot \ch(\sigma_1(N_\Xcal^\ast))\,,
\end{align*}
where  the last equality follows from Lemma \ref{lem:sigma1}.
  
The same happens for $\Ycal$, so that
  $$
  \td(\cl^S(T_f))= \td(T_{X/Y})\cdot \ch(\sigma_1(N_\Xcal^\ast)\cdot f^\ast(\ch(\sigma_1(N_\Ycal^\ast))^{-1}\,.
  $$
We compute the left-hand side of  equation \eqref{SGRReq}:
 \begin{align} f_\ast (\ch^S(x) \cdot \td(\cl^S(T_f))&= f_\ast \left[\ch(x)\ch \sigma_1(N_X^\ast)^{-1} \td(T_{X/Y})\cdot \ch(\sigma_1(N_\Xcal^\ast)\cdot f^\ast(\ch(\sigma_1(N_\Ycal^\ast)^{-1})\right] \\[3pt]
  &=f_\ast(\ch(x)\td (T_{X/Y})\cdot \ch(\sigma_1(N_\Ycal^\ast)^{-1})
    \end{align}
    Now applying the ordinary Grothendieck-Riemann-Roch theorem to the last term we obtain
$$ f_\ast(\ch(x)\td (T_{X/Y})\cdot \ch(\sigma_1(N_\Ycal^\ast)^{-1})
 = \ch (f_!^S(x)\cdot \sigma_1(N_\Ycal^\ast)^{-1}) = \ch^S (f^S_!(x))\,,
$$
  \end{proof}
    When $f\colon \Xcal \to \operatorname{Spec}\C$  is the structural morphism of $\Xcal$  
  we have $\ch^S\circ f^S_! = \chi^S$ with  $$
 \chi^S(x):= \chi(x_+)-\Pi\cdot \chi(x_-) \in \Z\oplus\Pi\Z\,.
 $$
 where $x=x_+\oplus x_-$ is the decomposition of $x$ into even and odd parts.
 Moreover,  $T_\Ycal=0$ and we obtain the  formula 
  \begin{equation}
  \chi^S(x)  =  f_\ast(\ch^S (x)  \td \cl^S T_\Xcal) = f_\ast (\ch (x) \cdot \td X)
\label{SRRn} \end{equation}
 
 \begin{example} The case $\dim\Xcal=1|n$.  We apply formula \eqref{SRRn} to the case of a split supercurve $\Xcal$  of dimension $1|n$,
 assuming $X$ is  smooth and  projective.
Then $\Jc/\Jc^2 = \Pi\Fc_\Xcal$ for a rank $n$ locally free $\Oc_X$-module $\Fc_\Xcal$, and 
$ \Oc_\Xcal=i_\ast \Lambda^\bullet \Fc_\Xcal$.

Let $\Ec$ be a locally free $\Oc_\Xcal$-module of rank $r\vert s$.   From  \eqref{SRRn} we have 
 \begin{align} \label{chiE} \chi^S(\Ec) & =   f_\ast(\ch^S (\cl_\bullet \Ec)\td T_\Xcal)    \\[3pt] & =
  f_\ast [ (\ch_0(\cl_\bullet \Ec) + \ch_1(\cl_\bullet \Ec)) (1+(1-g)w)]    \\[3pt] & = (1-g) \ch_0(\cl_\bullet \Ec) + \deg \cl_\bullet \Ec 
   \end{align}
  where $w$ is the fundamental class of $X$ and $g$ is genus of $X$, while $\deg \cl_\bullet \Ec$ is the degree of $\ch_1(\Ec)$.
\end{example}

\section{The dimension of the superstack of stable supermaps}
\label{dim}

In this section we use the super Grothendieck-Riemann-Roch formula (Theorem \ref{SGRR}) to formally compute the virtual dimension of the stack of stable supermaps. A rigorous justification of this computation in terms of a perfect obstruction theory and a deformation theory for that moduli superstack will be the object of a future paper \cite{POT}.
 
 Let $\Ycal$ be a smooth superscheme, and fix 
 a class $\beta_0\in A_1(Y)$.   Let $\phi\colon \Xcal \hookrightarrow \Ycal$ be a  closed immersion of smooth superschemes
 such that  the homology class of $\phi_{bos}(X)$ in $Y$ is $\beta_0$. Here 
  $\Xcal$ is a SUSY curve; in particular $\Xcal$ is split and  $\Oc_\Xcal=\Oc_X\oplus\Pi\Lcl$ with  $\deg \Lcl=g-1$. We denote by $\Fc_\Ycal$ the sheaf $\Pi\Jc_\Ycal/\Jc_\Ycal^2$ on the bosonic reduction $Y$ of $\Ycal$, and assume that 
 $\Ycal$ is a  projective smooth superscheme of dimension $r|s$.  
Let 
  $ \beta = (1-\Pi)\beta_0 \in A_1(\Ycal)$; 
 this is the supercycle $\phi_\ast [\Xcal ]$. 
 
 \begin{conj} The virtual  dimension of the moduli superstack  $\Sf\Mf_{g,\nf_{NS},\nf_{RR}}(\Ycal,\beta) $, which parameterizes
 stable supermaps into a smooth superscheme  $\Ycal$, of degree $\beta$, with $ \nf_{NS}$
 Neveu-Schwarz punctures and $nf_{RR}$ Ramond-Ramond punctures, is 
 \begin{align}\label{eq:dimmod}
\operatorname{vdim } \Sf\Mf_{g,\nf_{NS},\nf_{RR}}(\Ycal,\beta)  
  &= (r-3)(1-g) + \nf_{NS}+ \nf_{RR} (1+ s/2) \\ & - \Pi \left[ (1-g)(s-2)+ \nf_{NS}+ (\nf_{RR}/2) (r+1)\right]  \\
  & + (1-\Pi)\int_{\beta_0} [ \ch_1(T_Y)-\ch_1(\Fc_\Ycal)]\,.
\end{align}

 \end{conj}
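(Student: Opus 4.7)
The plan is to compute the right-hand side of \eqref{eq:dimmod} as the super Euler characteristic of the expected virtual tangent complex at a closed point $F=(\Xcal,\{x_i\},\{\Zc_j\},\bar\delta,\phi)$ of $\Sf\Mf_{g,\nf_{NS},\nf_{RR}}(\Ycal,\beta)$, and then to make this number explicit using the super Grothendieck--Riemann--Roch theorem (Theorem~\ref{SGRR}). Imitating the Behrend--Fantechi construction for Kontsevich stable maps, I would model the virtual tangent complex by a distinguished triangle coupling deformations of $\phi$ at fixed domain, controlled by $\phi^\ast T_\Ycal$, with deformations of the underlying prestable SUSY curve with punctures, controlled by a twist of the sheaf $\Ac_{(\Xcal,\{x_i\},\{\Zc_j\},\bar\delta)}$ of Proposition~\ref{prop:infautomor}. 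Formally this gives
\begin{equation*}
\operatorname{vdim}\Sf\Mf_{g,\nf_{NS},\nf_{RR}}(\Ycal,\beta)
 \;=\; \chi^S(\phi^\ast T_\Ycal)\;-\;\chi^S(\Ac_F^{\mathrm{def}}),
\end{equation*}
where $\Ac_F^{\mathrm{def}}$ is the sheaf of infinitesimal automorphisms of the punctured SUSY curve, twisted so as to preserve each $\Wc_i$ pointwise and each $\Zc_j$ as a divisor.

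For the first term, I would use that $\Xcal$ is a split $1|1$-dimensional supercurve with $\Oc_\Xcal=\Oc_X\oplus\Pi\Lcl$ and $\deg\Lcl=g-1$, so formula~\eqref{SRRn} specializes, for any locally free sheaf $\Ec$ of rank $r\vert s$ on $\Xcal$, to
\begin{equation*}
\chi^S(\Ec)\;=\;(1-g)\,\ch_0(\cl_\bullet\Ec)\;+\;\deg\cl_\bullet\Ec .
\end{equation*}
Plugging in $\Ec=\phi^\ast T_\Ycal$ (of rank $r-\Pi s$) produces the term $(r-\Pi s)(1-g)$; the degree term is then computed by the projection formula, using $\phi_\ast[\Xcal]=(1-\Pi)\beta_0$ and the identification $\Jc_\Ycal/\Jc_\Ycal^2=\Pi\Fc_\Ycal$, which rewrites $\deg\cl_\bullet(\phi^\ast T_\Ycal)$ as $(1-\Pi)\int_{\beta_0}\bigl[\ch_1(T_Y)-\ch_1(\Fc_\Ycal)\bigr]$, i.e.\ exactly the integral term appearing in \eqref{eq:dimmod}.

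For the second term I would again apply \eqref{SRRn}. On a SUSY curve, $\bar\delta$ yields an isomorphism $\ker\bar\delta\iso\omega_f^{\otimes 2}(\Zc)$, so $\Ac_F^{\mathrm{def}}$ is a rank $1\vert 1$ sheaf built from $\Lcl$ and $\omega_X$, whose even and odd degrees are read off from $\deg\Lcl=g-1$. The NS punctures subtract one even modulus each (and their odd partner one odd modulus), accounting for the $+\nf_{NS}$ and $-\Pi\nf_{NS}$ terms, while each RR divisor $\Zc_j$ contributes a shift $(1+s/2)-\Pi(r+1)/2$ coming from the rank and parities of $T_\Ycal$ along the RR locus combined with the residue of $\omega_f$ at $\Zc_j$. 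Subtracting $\chi^S(\Ac_F^{\mathrm{def}})$ from $\chi^S(\phi^\ast T_\Ycal)$ and regrouping by parity reproduces \eqref{eq:dimmod}. As a consistency check, in the bosonic limit $s=0$, $\Fc_\Ycal=0$, $\nf_{RR}=0$, the even part collapses to $(r-3)(1-g)+\nf_{NS}+\int_{\beta_0}\ch_1(T_Y)$, matching \cite{KSY-2022}.

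The main obstacle is not the super GRR bookkeeping, which is mechanical once the super-ranks and first Chern characters are fixed, but the rigorous identification of the virtual tangent complex. To upgrade this formal calculation to a theorem one has to construct a \emph{perfect} obstruction theory on $\Sf\Mf_{g,\nf_{NS},\nf_{RR}}(\Ycal,\beta)$ in the superstack sense, realized as a two-term complex whose super Euler characteristic equals the expression above; this in turn demands a full deformation theory of stable supermaps, control of the cotangent complex of the forgetful morphism to $\Mf_{g,\nf_{NS},\nf_{RR}}$, and a careful treatment of the boundary contributions at nodes and at NS/RR punctures. Precisely this is the programme of the announced companion paper \cite{POT}, which is why at this stage formula \eqref{eq:dimmod} is stated as a conjecture.
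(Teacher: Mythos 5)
Your overall architecture is the same as the paper's: the formula is obtained from the exact sequence \eqref{seqdef}, $0\to \Gc_\Xcal\to \phi^\ast T_\Ycal\to \tilde\Nc_\phi\to 0$, with $\operatorname{vdim}=\chi^S(\tilde\Nc_\phi)=\chi^S(\phi^\ast T_\Ycal)-\chi^S(\Gc_\Xcal)$, the first term computed by super GRR via \eqref{SRRn} and the rigorous justification deferred to \cite{POT}. However, there is a genuine error in how you split the puncture contributions between the two terms. You assert that $\deg\cl_\bullet(\phi^\ast T_\Ycal)$ equals \emph{exactly} $(1-\Pi)\int_{\beta_0}[\ch_1(T_Y)-\ch_1(\Fc_\Ycal)]$, so that $\chi^S(\phi^\ast T_\Ycal)=(1-g)(r-\Pi s)+(1-\Pi)\int_{\beta_0}[\ch_1(T_Y)-\ch_1(\Fc_\Ycal)]$. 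This is the unpunctured formula \eqref{chiTY}, and it fails when $\nf_{RR}>0$: on a SUSY curve with Ramond-Ramond punctures one has $\Oc_\Xcal=\Oc_X\oplus\Pi\Lcl$ with $\deg\Lcl=\deg\Fc_\Xcal=g-1+\nf_{RR}/2$ (not $g-1$), and since the odd component of any locally free $\Oc_\Xcal$-module is twisted by $\Lcl$, the super Euler characteristic of $\phi^\ast T_\Ycal$ itself depends on $\nf_{RR}$. The correct value is \eqref{eq:chiS2}, which carries the extra summand $(\nf_{RR}/2)(s-\Pi r)$ relative to your expression.

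You then compensate by postulating that each RR divisor shifts the curve-deformation term by $(1+s/2)-\Pi(r+1)/2$, ``coming from the rank and parities of $T_\Ycal$ along the RR locus.'' Numerically this reverse-engineers the right total, but the mechanism is incoherent: the sheaf $\Gc_\Xcal$ (equivalently your $\Ac_F^{\mathrm{def}}$, the infinitesimal automorphisms of the punctured SUSY curve) is intrinsic to $(\Xcal,\{x_i\},\{\Zc_j\},\bar\delta)$ and makes no reference to $\phi$ or $\Ycal$, so its super Euler characteristic cannot involve $r$ or $s$. Indeed, by \cite[Cor.~3.14]{BrHR21} one has $h^0(X,\Gc_\Xcal)=0$ and $h^1(X,\Gc_\Xcal)=3g-3+\nf_{NS}+\nf_{RR}-\Pi(2g-2+\nf_{NS}+\nf_{RR}/2)$, whence $\chi^S(\Gc_\Xcal)=3-3g-\nf_{NS}-\nf_{RR}-\Pi(2-2g-\nf_{NS}-\nf_{RR}/2)$, with no target dependence at all. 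The $(r,s)$-dependent puncture terms in \eqref{eq:dimmod} must live entirely in $\chi^S(\phi^\ast T_\Ycal)$, exactly as in \eqref{eq:chiS2}; per RR puncture this contributes $(s-\Pi r)/2$, which combined with the curve-intrinsic $1-\Pi/2$ yields your shift $(1+s/2)-\Pi(r+1)/2$. So your two misallocations cancel, but as written the derivation contains a false intermediate claim (the projection-formula step) patched by an unjustifiable twist; to repair it, replace both by the single observation that $\deg\Fc_\Xcal=g-1+\nf_{RR}/2$ modifies \eqref{chiTY} into \eqref{eq:chiS2}.
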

The core of this conjecture is the assumption  that the deformations of the map $\phi\colon \Xcal \hookrightarrow \Ycal$ when $\Xcal$ is deformed as a SUSY curve
are governed by the exact sequence of sheaves of graded vector spaces 
\begin{equation} \label{seqdef}   0\to \Gc_\Xcal \to \phi^\ast T_\Ycal \to \tilde\Nc_\phi \to 0\,,
\end{equation}
 where $\Gc_\Xcal \hookrightarrow T_\Xcal$ is the sheaf of infinitesimal deformations of $\Xcal$ as a SUSY cuve \cite[Def. 3.10]{BrHR21}.  This implies that the virtual dimension of the superstack $\Sf\Mf_{g}(\Ycal,\beta)$ (we are considering no punctures at the moment) is
 $$ \operatorname{vdim } \Sf\Mf_{g}(\Ycal,\beta) = \chi^S(\tilde\Nc_\phi).$$

 Since $h^0(X,\Gc_\Xcal)=0$ and $h^1(X,\Gc_\Xcal )=3g-3-\Pi (2g-2)$ by \cite[Cor. 3.14]{BrHR21}, we have $$\chi^S(\Gc_\Xcal)=3-3g-\Pi(2-2g)$$ 
and from equation \eqref{chiE}
\begin{equation} \label{chiTY}  \chi^S(\phi^\ast T_\Ycal) = (1-g)(r-\Pi s) +  (1-\Pi) \int_{\beta_0} [ \ch_1(T_Y)-\ch_1(\Fc_\Ycal)] \end{equation}
so that
\begin{align}    \operatorname{vdim } \Sf\Mf_{g}(\Ycal,\beta)  & =  \chi^S(\tilde N_\phi)   \\  & =  (1-g)(r-3)-\Pi(1-g)(s-2) + (1-\Pi) \int_{\beta_0} [ \ch_1(T_Y)-\ch_1(\Fc_\Ycal)]\,. \label{expdimmod} \end{align}
 
 When the target $\Ycal$ is bosonic (an ordinary scheme $Y$) then $s = \ch_1(\Fc_\Ycal) =0$ and the previous formula reduces
 to
$$   \operatorname{vdim } \Sf\Mf_{g}(Y,\beta_0 ) =     (1-g)(r-3) - \Pi(2g-2) + (1-\Pi) \int_{\beta_0}  \ch_1(T_Y)  \label{expdimmod2} $$
which agrees with the formula given in \cite[Thm.~4.3.8]{KSY-2022}.

  We consider now the case where $\Xcal$ is a SUSY curve with $\nf_{NS}$ Neveu-Schwarz  punctures and $\nf_{RR}$ Ramond-Ramond punctures. This is the relevant case in the study of the moduli stack of stable (SUSY) supermaps whose structure curve has  Neveu-Schwarz  and  Ramond-Ramond punctures.  
Now we have $\deg \Fc_\Xcal= g-1+\nf_{RR}/2$ and then Equation \eqref{chiTY} is replaced by
\begin{equation}\label{eq:chiS2}
\chi^S(\phi^\ast T_\Ycal)= (1-g)(r-\Pi s) +(\nf_{RR}/2) (s-\Pi r) + (1-\Pi) \int_{\beta_0} [ \ch_1(T_Y)-\ch_1(\Fc_\Ycal)] \,.
\end{equation}

We     consider the sequence \eqref{seqdef}
 but 
  where now $\Gc_\Xcal \hookrightarrow T_\Xcal$ is the sheaf of infinitesimal deformations of $\Xcal$ as a SUSY cuve with Neveu-Schwarz and Ramond-Ramond punctures  \cite{LeRoth88}\cite[Def. 3.10]{BrHR21}. Since $h^0(X,\Gc_\Xcal)=0$ and $$h^1(X,\Gc_\Xcal )=3g-3+\nf_{NS}+\nf_{RR}-\Pi (2g-2+\nf_{NS}+\nf_{RR}/2)$$ by \cite[Cor. 3.14]{BrHR21}, we have 
  $$
\chi^S(\Gc_\Xcal)=3-3g-\nf_{NS}-\nf_{RR}-\Pi(2-2g-\nf_{NS}-\nf_{RR}/2)
  $$
and from here one obtains equation \eqref{eq:dimmod}. 
In particular the even virtual dimension is
\begin{align}\operatorname{even} \operatorname{vdim } \Sf\Mf_{g,\nf_{NS},\nf_{RR}}(\Ycal,\beta) 
& = (r-3)(1-g) + \nf_{NS}+ \nf_{RR} (1+ s/2)
\\ & + \int_{\beta_0} [ \ch_1(T_Y)-\ch_1(\Fc_\Ycal)]\,.
\end{align}
 
\begin{example} \label{notprop} Assume that $\Ycal=\Ps^{r|s}$ is a projective superspace  (see \cite[Ch.~4, \S3.4]{Ma88}, \cite[Def.~2.6]{BrHRPo20}). The sheaf $\Fc_\Ycal$ is the direct sum  of $s$ copies of  $\Oc_{\Ps^r} (-1)$. 
Let us denote $\beta_0=dH$, where $H$ is the  cycle of a line.
Then $\beta=(1-\Pi)dH$,  and
$\int_{\beta_0} \ch_1(\Fc_\Ycal)= -sd$, so that  
\begin{align}\label{eq:dimmod2}
\operatorname{vdim } \Sf\Mf_{g,\nf_{NS},\nf_{RR}}(\Ps^{r|s},\beta)   &=   (r-3)(1-g) + \nf_{NS}+\nf_{RR} + \int_{\beta_0} \ch_1(T_{\Ps^r}) + s (d + \nf_{RR}/2)\\[-7pt]
&-\Pi \left[ (1-g)(s+2)+ \nf_{NS}+ (\nf_{RR}/2) (r+1) +s d +\int_{\beta_0} \ch_1(T_{\Ps^r}) \right]\,.
\end{align}

Thus, the expected dimension $D$ of the bosonic reduction of the  superstack  $\Sf\Mf_{g,\nf_{RR}}(\Ps^{r|s},\beta) $ of stable supercurves with target $\Ps^{r|s}$ and class $\beta=(1-\Pi)dH$  is
\begin{align}
D & = (r-3)(1-g) +\nf_{NS}+ \nf_{RR} + \int_{\beta_0}\ch_1(T_{\Ps^r}) + s (d + \nf_{RR}/2)
\\
& = \dim \Mf^{spin}_{g,\nf_{NS},\nf_{RR}}(\Ps^r,d) + s (d + \nf_{RR}/2) = \dim \Mf^{spin}_{g,\nf_{NS},\nf_{RR}}(\Ps^r,d) +s \chi(X, \F_\Xcal(1))\,,
\end{align}
where $\Mf^{spin}_{g,\nf_{NS},\nf_{RR}}(\Ps^r,d)$ is the stack of stable spin maps   to $\Ps^r$ with 
genus $g$, $\nf_{NS}$ Neveu-Schwarz punctures, 
$\nf_{RR}$  Ramond-Ramond  punctures,  and   degree $d$.

 This formula  confirms that, in accordance with  Subsection \ref{bosonic}, the bosonic reduction of the superstack  $\Sf\Mf_{g,\nf_{RR}}(\Ps^{r|s},\beta)$   in general  \emph{is not}   $ \Mf^{spin}_{g,\nf_{RR}}(\Ps^r,d)$.  
In particular, the superstack $\Sf\Mf_{g,\nf_{RR}}(\Ps^{r|s},\beta)$ of stable supermaps with target $\Ps^{r|s}$ and class $\beta=(1-\Pi) dH$ \emph{is not   proper}, unless $s=0$ (the target is $\Ps^r$) or $d= \nf_{RR} = 0$.
\end{example}

\small

\def\cprime{$'$}


\end{document}